\theoremstyle{plain}
\newtheorem{theorem}{\bf Theorem}[section]
\newtheorem{proposition}[theorem]{\bf Proposition}
\newtheorem{lemma}[theorem]{\bf Lemma}
\theoremstyle{definition}
\newtheorem{example}[theorem]{\bf Example}
\newtheorem{definition}[theorem]{\bf Definition}
\newtheorem{remark}[theorem]{\bf Remark}
\newcommand{\N}{\mathbb N}
\newcommand{\Z}{\mathbb Z}
\renewcommand{\t}{\, | \,}
\newcommand{\und}{\;\mbox{ and }\;}
 \DeclareMathOperator{\ord}{ord}
 \DeclareMathOperator{\supp}{supp}
\newcommand{\bdot}{\boldsymbol{\cdot}}
\newcommand{\bulletprod}[1]{\underset{#1}{\bullet}}
\newcommand{\red}{{\text{\rm red}}}
\numberwithin{equation}{section}
\begin{document}

\title[Product-one sequences]{On the algebraic and arithmetic structure \\ of the monoid of product-one sequences}

\address{Institute for Mathematics and Scientific Computing\\ University of Graz, NAWI Graz\\ Heinrichstra{\ss}e 36\\ 8010 Graz, Austria }
\email{junseok.oh@uni-graz.at}

\author{Jun Seok Oh}

\thanks{This work was supported by
the Austrian Science Fund FWF,  W1230 Doctoral Program Discrete Mathematics.}

\keywords{product-one sequences, Davenport constant, C-monoids, sets of lengths}

\subjclass[2010]{13A50, 20D60,  20M13}

\begin{abstract}
Let $G$ be a finite group. A finite unordered  sequence $S = g_1 \bdot \ldots \bdot g_{\ell}$ of terms from $G$, where repetition is allowed,   is a product-one sequence if its terms can be ordered such that their product equals $1_G$, the identity element of the group. As usual, we consider sequences as elements of the free abelian monoid $\mathcal F (G)$ with basis $G$, and we study the submonoid $\mathcal B (G) \subset \mathcal F (G)$ of all product-one sequences. This  is a finitely generated C-monoid, which  is a Krull monoid if and only if $G$ is abelian. In case of abelian groups,  $\mathcal B (G)$ is a well-studied object. In the present paper we focus on non-abelian groups, and  we study the class semigroup  and the arithmetic of $\mathcal B (G)$.
\end{abstract}

\maketitle

\bigskip
\section{Introduction} \label{1}
\bigskip

Let $G$ be a finite group. By a sequence over $G$, we mean a finite unordered sequence of terms from $G$, where the repetition of elements is allowed (the terminology stems from Arithmetic Combinatorics). A sequence $S$ is a product-one sequence if its terms can be ordered such that their product equals the identity element of the group. Clearly, juxtaposition of sequences is a  commutative operation on the set of sequences. As usual we consider sequences as elements of the free abelian monoid $\mathcal F (G)$ with basis $G$, and clearly the subset $\mathcal B (G) \subset \mathcal F (G)$ of all product-one sequences is a submonoid. The small Davenport constant $\mathsf d (G)$ is the maximal integer $\ell$ for which there is a sequence of length $\ell$ which has no product-one subsequence. The large Davenport constant $\mathsf D (G)$ is the maximal length of a minimal product-one sequence (by a minimal product-one sequence we mean an irreducible element in the monoid $\mathcal B (G)$).

Suppose that $G$ is abelian, and let us use additive notation in this case. Then product-one sequences are zero-sum sequences and their study is the main objective of Zero-Sum Theory (\cite{Ga-Ge06b, Gr13a}). The monoid $\mathcal B (G)$ is a Krull monoid with class group $G$ (apart from the exceptional case where $|G|=2$), and because it has intimate relationship with general Krull monoids having class group $G$, the study of $\mathcal B (G)$ is a central topic in factorization theory (\cite{Ge-HK06a, Ge09a}).

Although the abelian setting has been the dominant one, many of the combinatorial problems on sequences over abelian groups have also been studied in the nonabelian setting. For example, an  upper bound on the small Davenport constant was given   already in the 1970s \cite{Ol-Wh77}, and in recent years Gao's Theorem $\mathsf E (G) = |G|+\mathsf d (G)$ (\cite[Chapter 16]{Gr13a}) has been generalized to a variety of nonabelian groups (\cite{Ba07b, Ga-Lu08a, Ga-Li10b, Ha15a, Ha-Zh17b}). Fresh impetus came from invariant theory. If $G$ is abelian, then $\mathsf d (G)+1= \boldsymbol \beta (G) = \mathsf D (G)$, where $\boldsymbol \beta (G)$ is the Noether number.
If $G$ is nonabelian and has a cyclic subgroup of index two, then the Davenport constants and the Noether number have been explicitely determined (\cite{Cz-Do14a, Ge-Gr13a}), and it turned out that $\mathsf d (G)+1 \le \boldsymbol \beta (G) \le \mathsf D (G)$.
For a survey on the interplay with invariant theory we refer to \cite{Cz-Do-Ge16a}
and for recent progress to \cite{Gr13b, Cz-Do13c, Cz-Do15a, Cz-Do-Sz17}.

Let $F$ be a factorial monoid. A submonoid $H \subset F$ is a C-monoid if $H^{\times} = H \cap F^{\times}$ and the reduced class semigroup is finite. A commutative ring is called a C-ring if its monoid of regular elements is a C-monoid. To give an example of a C-ring, consider a Mori domain $R$. If its conductor $\mathfrak f = (R \colon \widehat R)$ is nonzero and the residue class ring $\widehat{R}/\mathfrak f$ is finite, then $R$ is a C-ring. For more on C-rings we refer to \cite{Re13a, Ge-Ra-Re15c}. The finiteness of the reduced class semigroup is used to derive arithmetical finiteness result for C-monoids (\cite{Ge-HK06a}). However, the structure of the (reduced) class semigroup has never been studied before.

The monoid $\mathcal B (G)$ of product-one sequences is a finitely generated C-monoid, which is Krull if and only if $G$ is abelian (in the Krull case, the class semigroup coincides with the class group which is isomorphic to $G$, apart from the exceptional case where $|G|=2$). After putting together the needed background in Section \ref{2}, we study the structure of the class semigroup of $\mathcal B (G)$ in Section \ref{3}. Among others we show that the unit group of the class semigroup is isomorphic to the center of $G$, and we reveal a subgroup of the class semigroup which is isomorphic to $G/G'$, which is the class group of the complete integral closure of $\mathcal B (G)$ (see Theorems \ref{3.8} and \ref{3.10}; $G'$ denotes the commutator subgroup of $G$). In Section \ref{4} we provide a complete description of the class semigroup for non-abelian groups of small order. In Section \ref{5} we study the arithmetic of $\mathcal B (G)$ and our focus is on sets of lengths. Among others we show that unions of sets of lengths  are finite intervals (Theorem \ref{5.5}).

\bigskip
\centerline{\it Throughout this paper, let $G$ be a multiplicatively written finite group with identity element $1_G \in G$.}

\bigskip
\section{Preliminaries} \label{2}
\bigskip

We denote by $\N$ the set of positive integers. For integers $a, b \in \Z$, $[a,b] = \{x \in \Z \mid a \le x \le b\}$ means the discrete interval. For every $n \in \N$, $C_n$ denotes a cyclic group of order $n$.
For an element $g \in G$, $\ord (g) \in \N$ is its order, and for a subset $G_0 \subset G$, $\langle G_0 \rangle \subset G$ denotes the subgroup generated by $G_0$.  We will use the following standard notation of group theory:
\begin{itemize}
\item $\mathsf Z (G) = \{g \in G \mid gx = xg \ \text{for all} \ x \in G\} \triangleleft G$ is the {\it center} of $G$,

\item $[x,y] = xyx^{-1}y^{-1} \in G$ is the {\it commutator} of the elements $x,y \in G$, and

\item $G' = [G,G] = \langle [x,y] \mid x,y \in G \rangle \triangleleft G$ is the    {\it commutator subgroup} of $G$.
\end{itemize}

\smallskip
\noindent
{\bf Semigroups.} All our semigroups are commutative and have an identity element.
Let $S$ be a semigroup.  We denote by $S^{\times}$ its group of invertible elements and by $\mathsf E (S)$ the set of all idempotents of $S$. $\mathsf E (S)$ is endowed with the Rees order $\leq$, defined by $e \leq f$ if $ef =e$. Clearly, $ef \leq e$ and $ef \le  f$ for all $e,f \in \mathsf E (S)$. If $E \subset \mathsf E (S)$ is a finite subsemigroup,  then $E$ has a smallest element.

By a {\it monoid}, we mean a  semigroup  which satisfies the cancellation laws. Let $H$ be a monoid. Then  $\mathsf q (H)$ denotes the quotient group of $H$ and $\mathcal A (H)$ the set of irreducibles (atoms) of $H$. The monoid $H$ is called {\it atomic} if every non-unit of $H$ can be written as a finite product of atoms. We say that $H$ is reduced if $H^{\times} = \{1\}$, and we denote by $H_{\red} =H/H^{\times} = \{aH^{\times} \mid a \in H \}$ the associated reduced monoid of $H$. A monoid $F$ is called {\it free abelian with basis $P \subset F$} if every $a \in F$ has a unique representation of the form
\[
a = \prod_{p \in P} p^{\mathsf v_p (a)} \quad \text{with} \quad \mathsf v_p (a)=0 \quad  \text{for almost all} \quad p \in P \,.
\]
If $F$ is free abelian with basis $P$, then $P$ is the set of primes of $F$, we set $F = \mathcal F (P)$, and denote by
\begin{itemize}
\item $|a| = \sum_{p \in P} \mathsf v_p (a)$ the {\it length} of $a$, and by
\item $\supp (a) = \{p \in P \mid \mathsf v_p (a)>0\}$ the {\it support} of $a$.
\end{itemize}
A monoid $F$ is factorial if and only if $F_{\red}$ is free abelian if and only if $F$ is atomic and every atom is a prime. We denote by
\begin{itemize}
\item $H' = \{ x \in \mathsf q (H) \mid \text{there is an $N \in \N$ such that } \ x^n \in H \ \text{for all} \ n \ge N\}$ the {\it seminormalization} of $H$,  by

\item $\widetilde H = \{ x \in \mathsf q (H) \mid x^N \in H \ \text{for some} \ N \in \N\}$ the {\it root closure} of $H$, and by

\item $\widehat H = \{ x \in \mathsf q (H) \mid \text{there is a $c \in H$ such that} \ cx^n \in H \ \text{for all} \ n \in \N \}$ the {\it complete integral closure} of $H$,
\end{itemize}
and observe that $H \subset H' \subset \widetilde H \subset \widehat H \subset \mathsf q (H)$. Then the monoid $H$ is called
\begin{itemize}
\item {\it seminormal} if $H = H'$ (equivalently,  if $x \in \mathsf q (H)$ and $x^2, x^3 \in H$, then
           $x \in H$),

\item {\it root closed} if $H = \widetilde H$,

\item {\it completely integrally closed} if $H =\widehat H$.
\end{itemize}
If $D$ is a monoid and $H \subset D$ a submonoid, then $H$ is a divisor-closed submonoid if $a \in H, b \in D$, and $b \mid a$ implies that $b \in H$.
A monoid homomorphism $\varphi \colon H \to D$ is said to be
\begin{itemize}
\item {\it cofinal} if for every $\alpha \in D$ there is an $a \in H$ such that $\alpha \mid \varphi (a)$.

\item a {\it divisor homomorphism} if $a, b \in H$ and $\varphi (a) \mid \varphi (b)$ implies that $a \mid b$.

\item a {\it divisor theory} if $D$ is free abelian, $\varphi$ is a divisor homomorphism, and for all $\alpha \in D$ there are $a_1, \ldots, a_m \in H$ such that $\alpha = \gcd ( \varphi (a_1), \ldots, \varphi (a_m) )$.

\item a {\it transfer homomorphism} if it satisfies the following two properties{\rm \,:}
      \begin{itemize}
      \item[{\bf (T\,1)\,}] $D = \varphi (H)D^{\times}$ and $\varphi^{-1} (D^{\times}) = H^{\times}$.
      \item[{\bf (T\,2)\,}] If $u \in H$, \ $b,\,c \in D$ \ and \ $\varphi (u) = bc$, then there exist \ $v,\,w \in H$ \ such that \ $u = vw$, \ $\varphi (v) D^{\times} = bD^{\times}$,  and  $\varphi (w)D^{\times}= cD^{\times}$.
      \end{itemize}
\end{itemize}

\medskip
\noindent
{\bf Transfer Krull monoids.} A monoid $H$ is said to be a {\it Krull monoid} if it satisfies one of the following equivalent conditions (see \cite[Theorem 2.4.8]{Ge-HK06a}):
\begin{enumerate}
\item[(a)] $H$ is completely integrally closed and satisfies the ACC on divisorial ideals.

\item[(b)] There is a divisor homomorphism $\varphi \colon H \to F$, where $F$ is free abelian monoid.

\item[(c)] $H$ has a divisor theory.
\end{enumerate}
A commutative domain $R$ is a Krull domain if and only if its multiplicative monoid of nonzero elements is Krull. Further examples of Krull monoids can be found in \cite{Ge-HK06a, Ge16c}. Let $H$ be a Krull monoid. Then a divisor theory $\varphi \colon H \to F$ is unique up to isomorphism and
\[
\mathcal C (H) = \mathsf q (F) / \mathsf q \big( \varphi (H)\big)
\]
is called the {\it class group} of $H$.

A monoid $H$ is said to be a {\it transfer Krull monoid} if it allows a transfer homomorphism to a Krull monoid (since in our setting all monoids are commutative,  \cite[Lemma 2.3.3]{Ba-Sm15} shows that the present definition coincides with the definition in  \cite{Ge16c}). The significance of transfer homomorphisms $\theta \colon H \to B$ stems from the fact that they allow to pull back arithmetical properties from the (simpler monoid) $B$ to the monoid $H$ (of original interest). In particular,  if $H$ is a transfer Krull monoid, then sets of lengths in $H$ coincide with sets of lengths in a Krull monoid (see Equation \eqref{transfer}).

Since the identity map is a transfer homomorphism, every Krull monoid is a transfer Krull monoid. For a list of transfer Krull monoids which are not Krull we refer to \cite{Ge16c}. To give one such example, consider a ring of integers $\mathcal O$ in an algebraic number field $K$, a central simple algebra $A$ over $K$, and a classical maximal $\mathcal O$-order $R$ of $A$. If every stably free left $R$-ideal is free, then the (non-commutative) semigroup of cancellative elements of $R$ is a transfer Krull monoid over a finite abelian group (\cite{Sm13a}).

\medskip
\noindent
{\bf Class semigroups and C-monoids.} (a detailed presentation can be found in \cite[Chapter 2]{Ge-HK06a}).
Let $F$ be a monoid and $H \subset F$ a submonoid. Two elements $y, y' \in F$
are called $H$-equivalent, denote $y \sim_{H} y'$, if $y^{-1}H \cap F = {y'}^{-1} H \cap
F$. $H$-equivalence is a congruence relation on $F$. For $y \in
F$, let $[y]_H^F$ denote the congruence class of $y$, and let
\[
\mathcal C (H,F) = \big\{ [y]_H^F \mid y \in F \big\} \quad \text{and} \quad \mathcal C^* (H,F) = \big\{ [y]_H^F \mid y \in (F \setminus F^{\times}) \cup \{1\} \big\} \,.
\]
Then $\mathcal C (H,F)$ is a commutative semigroup with unit element $[1]_H^F$
(called the {\it class semigroup} of $H$ in $F$) and $\mathcal C^* (H,F)
\subset \mathcal C (H,F)$ is a subsemigroup (called the {\it reduced
class semigroup} of $H$ in $F$).

As usual, class groups and class semigroups will both be written additively. The following lemma describes the relationship between class groups and class semigroups. Its proof is elementary and can be found in \cite[Propositions 2.8.7 and 2.8.8]{Ge-HK06a}

\begin{lemma} \label{2.1}~
Let $F$ be a monoid and $H \subset F$ be a submonoid.
\begin{enumerate}
\item If $H \subset F$ is cofinal, then the map $\theta \colon \mathcal C (H,F) \to \mathsf q (F)/\mathsf q (H)$, $[y]_H^F \mapsto y \mathsf q (H)$ for all $y \in F$, is an epimorphism. Moreover, $\theta$ is an isomorphism if and only if $H \hookrightarrow F$ is a divisor homomorphism.

\item If $\mathcal C (H,F)$ is a group, then $H \subset F$ is cofinal, and if $\mathcal C (H,F)$ is a torsion group, then  $H \hookrightarrow F$ is a divisor homomorphism.
\end{enumerate}
\end{lemma}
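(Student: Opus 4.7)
My plan is to handle the two parts in sequence, with most of the work concentrated in the torsion implication of (2).

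For (1), I would verify well-definedness, multiplicativity, and surjectivity of $\theta$ by standard cofinality arguments: if $y \sim_H y'$, pick $a \in H$ with $y \mid a$ in $F$; then $y^{-1}a \in y^{-1}H \cap F = y'^{-1}H \cap F$ forces $y'/y \in \mathsf{q}(H)$. Similarly, for any $x = y_1 y_2^{-1} \in \mathsf{q}(F)$, pick $a \in H$ with $y_2 \mid a$ in $F$; then $y_1(y_2^{-1}a) \in F$ maps to $x\mathsf{q}(H)$ under $\theta$. For the equivalence ``$\theta$ isomorphism $\iff$ $H \hookrightarrow F$ divisor homomorphism'', I would show both are equivalent to $\mathsf{q}(H) \cap F = H$. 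Assuming $\theta$ injective, any $y \in \mathsf{q}(H) \cap F$ satisfies $\theta([y]_H^F) = \theta([1]_H^F)$, so $[y]_H^F = [1]_H^F$, and $1 \in y^{-1}H \cap F = H$ forces $y \in H$. Conversely, if the divisor-homomorphism property holds and $y/y' \in \mathsf{q}(H)$, then for $\alpha \in y^{-1}H \cap F$ one has $y'\alpha = (y'/y)(y\alpha) \in \mathsf{q}(H) \cap F = H$; symmetry then yields $y \sim_H y'$.

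For (2), the first claim follows in one line: given $y \in F$, the invertibility of $[y]_H^F$ in the group yields $z \in F$ with $yz \sim_H 1$, whence $yz \in H$ (plug $1 \in H$ into $(yz)^{-1}H \cap F = H$), so $y$ divides $yz \in H$.

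The main step is to derive the divisor-homomorphism property from the torsion hypothesis, and here identifying the right element to which torsion applies is the crucial point. Applying torsion to the class of the would-be quotient $[b/a]_H^F$ does not work --- from $(b/a)^n \in H$ alone one cannot extract $b/a \in H$ without root closure, which we do not have. Instead, given $a, b \in H$ and $\alpha \in F$ with $b = a\alpha$, I would apply torsion to $[a]_H^F$ itself: there exists $n \in \N$ with $[a^n]_H^F = [1]_H^F$, i.e., $(a^n)^{-1}H \cap F = H$. Then
\[
a^n \alpha \;=\; a^{n-1}(a\alpha) \;=\; a^{n-1}b \;\in\; H \,,
\]
since $a^{n-1}, b \in H$. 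The cancellation rule $(a^n)^{-1}H \cap F = H$ now forces $\alpha \in H$, which is precisely the divisor-homomorphism property. The only genuine obstacle is recognizing that one should apply torsion to $[a]_H^F$ rather than to $[b/a]_H^F$; once that is seen, the computation is a single line.
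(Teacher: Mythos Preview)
Your argument is correct in all parts. In particular, the key insight for the torsion implication in (2)---applying torsion to $[a]_H^F$ rather than to $[\alpha]_H^F$ or to the class of $b/a$---is exactly the right move, and your one-line computation $a^n\alpha = a^{n-1}b \in H$ together with $(a^n)^{-1}H \cap F = H$ cleanly yields $\alpha \in H$.

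As for comparison with the paper: the paper does not give its own proof of this lemma. It states that the result is elementary and refers the reader to Propositions~2.8.7 and~2.8.8 of the monograph \cite{Ge-HK06a}. Your write-up is essentially a self-contained reconstruction of those standard arguments, and nothing in it deviates from what one finds there. One very minor stylistic point: in the injectivity direction of (1) you wrote ``$1 \in y^{-1}H \cap F = H$ forces $y \in H$''; it would be clearer to spell this out as ``$y^{-1}H \cap F = H$ and $1 \in H$, hence $1 \in y^{-1}H$, hence $y \in H$'', but the logic is sound either way.
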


A monoid $H$ is called a {\it {\rm C}-monoid} if $H$ is a submonoid of a factorial monoid $F$ such that $H \cap F^{\times} = H^{\times}$ and $\mathcal C^* (H, F)$ is finite.
A commutative ring $R$ is a C-ring if its monoid of regular elements is a C-monoid. A Krull monoid is a C-monoid if and only if it has finite class group. We refer to \cite{Ge-HK06a, Re13a, Ge-Ra-Re15c, Ka16b} for more on C-monoids. To give an explicit example, consider a Mori ring $R$. If the conductor $\mathfrak f = (R \colon \widehat R)$ is nonzero and $\widehat{R}/\mathfrak f$ is finite, then  $R$ is a C-ring. We will need the
following lemma (for a proof see \cite[Theorem 2.9.11]{Ge-HK06a}).

\smallskip
\begin{lemma} \label{2.2}~
Let $F = F^{\times} \times \mathcal F (P)$ be factorial and $H \subset F$ be a {\rm C}-monoid. Suppose that $\mathsf v_p (H) \subset \N_0$ is a numerical monoid for all $p \in P$ (this is a minimality condition on $F$ which can always be assumed without restriction).
\begin{enumerate}
\item $\widehat H$ is a Krull monoid with nonempty conductor $(H \colon \widehat H)$ and finite class group $\mathcal C ( \widehat H)$.

\smallskip
\item The map \ $\partial \colon \widehat H \to \mathcal F(P)$, defined by
      \[
      \partial (a) = \prod_{p \in P} p^{\mathsf v_p(a)} \,,
      \]
      is a divisor theory, and there is an epimorphism \ $\mathcal C^* (H,F) \to \mathcal C(\widehat H)$.
\end{enumerate}
\end{lemma}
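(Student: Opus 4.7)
The plan is to prove both parts jointly, since the existence of a divisor theory for $\widehat H$ is precisely what certifies that $\widehat H$ is Krull, and the same finiteness principles yield both the conductor statement and the epimorphism onto $\mathcal C(\widehat H)$.

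My first step is to describe $\widehat H$ through idempotents of the class semigroup. Since $\mathcal C^*(H,F)$ is a finite commutative semigroup, every cyclic subsemigroup contains a unique idempotent. I would show that an element $x \in F$ lies in $\widehat H$ if and only if some power $x^n$ represents an idempotent in $\mathcal C^*(H,F)$. One direction is immediate from the definition of $\widehat H$; for the other, if $[x^n]_H^F$ is idempotent and $y$ is any element in that class, then $y x^{kn} \in H$ for every $k \ge 1$, which after choosing and multiplying together appropriate representatives furnishes a common $c$ with $c x^m \in H$ for all $m \in \N$.

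Next, I would analyze the map $\partial \colon \widehat H \to \mathcal F(P)$. That $\partial$ is a monoid homomorphism is clear from the factorial structure of $F$. To see that $\partial$ is a divisor homomorphism, suppose $a, b \in \widehat H$ with $\partial(a) \mid \partial(b)$; writing $b = \eta a \beta$ in $F$ with $\eta \in F^\times$ and $\beta \in \mathcal F(P)$, I would verify that $\eta \beta \in \widehat H$ by exhibiting the required witness element from the complete integral closure of $\widehat H$. For the divisor-theory condition, the hypothesis that each $\mathsf v_p(H) \subset \N_0$ is a numerical monoid is decisive: it guarantees that beyond a Frobenius threshold every power of $p$ appears among the $p$-adic values of $H$, which after combining with elements of coprime supports lets me express each prime $p \in P$ as a gcd of finitely many $\partial$-images. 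By the divisor theory characterization (c) of Krull monoids, this simultaneously shows that $\widehat H$ is Krull and identifies $\mathcal C(\widehat H) = \mathsf q(\mathcal F(P))/\mathsf q(\partial(\widehat H))$.

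Finally, I would produce the conductor and the epimorphism. Choosing a representative $y_\tau$ for each class $\tau \in \mathcal C^*(H,F)$ and combining them with exponents dictated by the Frobenius thresholds of the numerical monoids $\mathsf v_p(H)$, I would build an explicit $c \in H$ satisfying $c \widehat H \subset H$, using the idempotent characterization of $\widehat H$ to verify that every $x \in \widehat H$ is absorbed by $c$. The epimorphism $\mathcal C^*(H,F) \to \mathcal C(\widehat H)$ then sends $[y]_H^F$ to $\partial(y) \mathsf q(\partial(\widehat H))$, where $\partial$ is extended to $F$ in the obvious way; well-definedness is exactly where the conductor is used, and surjectivity is immediate since every element of $\mathcal F(P)$ arises as $\partial(y)$ for some $y \in F$. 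Finiteness of $\mathcal C(\widehat H)$ then follows as a quotient of the finite $\mathcal C^*(H,F)$. The main obstacle I expect is precisely the conductor construction: converting bare finiteness of $\mathcal C^*(H,F)$ into an explicit absorbing element requires simultaneous bookkeeping of class representatives and numerical monoid thresholds, and this same construction is the hinge that makes the epimorphism in part (2) well-defined.
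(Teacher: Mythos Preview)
The paper does not supply its own proof of this lemma; immediately before the statement it says ``for a proof see \cite[Theorem 2.9.11]{Ge-HK06a}'' and no argument is given. So there is nothing in the paper to compare your proposal against directly.

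That said, your outline is broadly in line with the standard treatment in Geroldinger--Halter-Koch. The strategy of reading off $\widehat H$ from the idempotent structure of the finite class semigroup, verifying that $\partial$ is a divisor homomorphism, using the numerical-monoid hypothesis on $\mathsf v_p(H)$ to obtain the gcd condition for a divisor theory, and then extracting the conductor and the epimorphism $\mathcal C^*(H,F) \to \mathcal C(\widehat H)$ from finiteness of $\mathcal C^*(H,F)$ is exactly the architecture of the cited proof. One point that would need more care than your sketch suggests is the idempotent characterization of $\widehat H$: the biconditional ``$x \in \widehat H$ iff some $[x^n]_H^F$ is idempotent'' is not literally correct without restricting to $x \in F \setminus F^\times$ (or otherwise handling units), and the passage from ``$[x^n]$ is idempotent'' to ``there exists a single $c \in H$ with $cx^m \in H$ for all $m$'' requires a small argument combining the idempotent class with representatives for the finitely many classes $[x], [x^2], \ldots, [x^{n-1}]$. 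Similarly, checking that $\partial$ is a divisor homomorphism on $\widehat H$ (rather than on $H$) needs the observation that $\widehat H$ is saturated in $F$ up to units, which again uses the idempotent description. These are routine but should not be glossed over.
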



\bigskip
\section{Algebraic Properties of the monoid of product-one sequences} \label{3}
\bigskip

We introduce sequences over the finite group $G$. Our notation and terminology follows the articles \cite{Ge-Gr13a, Gr13b, Cz-Do-Ge16a}. Let $G_0 \subset G$ be a subset. The elements of the free abelian monoid $\mathcal F (G_0)$ will be called  {\it sequences} over $G_0$.  This terminology goes back to Arithmetic Combinatorics. Indeed, a sequence over $G_0$ can be viewed as a finite unordered sequence of terms from $G_0$, where the repetition of elements is allowed. In order to avoid confusion between the multiplication in $G$ and multiplication in $\mathcal F (G_0)$, we denote multiplication in $\mathcal F (G_0)$ by the boldsymbol $\bdot$ and we use brackets for all exponentiation in $\mathcal F (G_0)$. In particular, a sequence $S \in \mathcal F (G_0)$ has the form
\begin{equation} \label{basic}
S = g_1 \bdot \ldots \bdot g_{\ell} = \bulletprod{i\in [1,\ell]} g_i = \bulletprod{g \in G_0}g^{[\mathsf v_g (S)]} \in \mathcal F (G_0),
\end{equation}
where $g_1, \ldots, g_{\ell} \in G_0$ are the terms of $S$. Moreover,  if $S_1, S_2 \in \mathcal F (G_0)$ and $g_1, g_2 \in G_0$, then $S_1 \bdot S_2 \in \mathcal F (G_0)$ has length $|S_1|+|S_2|$, \ $S_1 \bdot g_1 \in \mathcal F (G_0)$ has length $|S_1|+1$, \ $g_1g_2 \in G$ is an element of $G$, but $g_1 \bdot g_2 \in \mathcal F (G_0)$ is a sequence of length $2$. If $g \in G_0$, $T \in \mathcal F (G_0)$, and $k \in \N_0$, then
\[
g^{[k]}=\underset{k}{\underbrace{g\bdot\ldots\bdot g}}\in \mathcal F (G_0) \quad \text{and} \quad T^{[k]}=\underset{k}{\underbrace{T\bdot\ldots\bdot T}}\in \mathcal F (G_0) \,.
\]
Let $S \in \mathcal F (G_0)$ be a sequence as in \eqref{basic}. Then we denote by
\[
\pi (S) = \{ g_{\tau (1)} \ldots  g_{\tau (\ell)} \in G \mid \tau\mbox{ a permutation of $[1,\ell]$} \} \subset G \quad \und \quad \Pi (S) = \underset{|T| \ge 1}{\bigcup_{T \t S}} {\pi}(T)  \subset G \,,
\]
the {\it set of products} and {\it subsequence products} of $S$, and it can easily be seen that $\pi (S)$ is contained in a $G'$-coset. Note that $|S|=0$ if and only if $S=1_{\mathcal F (G_0)}$, and in that case we use the convention that $\pi (S) = \{1_G\}$. The sequence $S$ is called
\begin{itemize}
\item a {\it product-one sequence} if $1_G \in \pi (S)$, and
\item {\it product-one free} if $1_G \notin \Pi (S)$.
\end{itemize}

\smallskip
\begin{definition} \label{3.1}~
Let $G_0 \subset G$ be a subset.
\begin{enumerate}
\item The submonoid
      \[
      \mathcal B (G_0) = \{S \in \mathcal F (G_0) \mid 1_G \in \pi (S) \} \subset \mathcal F (G_0)
      \]
      is called the {\it monoid of product-one sequences}, and $\mathcal A (G_0) = \mathcal A \big( \mathcal B (G_0) \big)$ denotes its set of atoms.

\item We call
      \[
      \mathsf D (G_0) = \sup \{ |S| \mid S \in \mathcal A (G_0) \} \in \N \cup \{\infty\}
      \]
      the {\it large Davenport constant} of $G_0$ and
      \[
      \mathsf d (G_0) = \sup \{ |S| \mid S \in \mathcal F  (G_0) \ \text{is product-one free} \} \in \N_0 \cup \{\infty\}
      \]
      the {\it small Davenport constant} of $G_0$.
\end{enumerate}
\end{definition}

Note that obviously the set $\mathcal B (G_0)$ is a multiplicatively closed subset of the commutative cancellative semigroup $\mathcal F (G_0)$ whence $\mathcal B (G_0)$ is indeed a monoid. Our object of interest is the monoid $\mathcal B (G)$ but for technical reasons we also need the submonoids $\mathcal B (G_0)$ for subsets $G_0 \subset G$.
The next lemma gathers some elementary properties. Its simple proof can be found in \cite[Lemma 3.1]{Cz-Do-Ge16a}.

\begin{lemma} \label{3.2}~
Let  $G_0 \subset G$ be a subset.
\begin{enumerate}
\item $\mathcal B (G_0) \subset \mathcal F (G_0)$ is a reduced finitely generated submonoid, $\mathcal A (G_0)$ is finite, and $\mathsf D (G_0) \le |G|$.

\smallskip
\item Let $S \in \mathcal F (G)$ be product-one free.
      \begin{enumerate}
      \smallskip
      \item If $g_0 \in \pi (S)$, then $g_0^{-1} \bdot S \in \mathcal A (G)$. In particular, $\mathsf d (G)+1 \le \mathsf D (G)$.

      \smallskip
      \item If $|S|=\mathsf d (G)$, then $\Pi (S) = G \setminus \{1_G\}$ and hence $\mathsf d (G) = \max \{ |S| \colon S \in \mathcal F (G) \ \text{with} \ \Pi (S) = G \setminus \{1_G\} \}$.
      \end{enumerate}

\smallskip
\item If $G$ is cyclic, then $\mathsf d (G)+1 = \mathsf D (G) = |G|$.
\end{enumerate}
\end{lemma}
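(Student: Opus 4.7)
The plan is to dispatch (1), (2)(a), (2)(b), and (3) in order. For (1), reducedness is immediate from $\mathcal B(G_0)^\times \subset \mathcal F(G_0)^\times = \{1\}$. To bound $\mathsf D(G_0) \le |G|$, I would take $S = g_1 \bdot \ldots \bdot g_\ell \in \mathcal A(G_0)$ in an ordering with $g_1 \cdots g_\ell = 1_G$ and examine the partial products $h_k = g_1 \cdots g_k \in G$ for $k \in [0,\ell]$: if $\ell \ge |G|+1$, then among $h_0,\ldots,h_{\ell-1}$ two must coincide by pigeonhole, producing a proper nontrivial consecutive product-one subsequence in this ordering and contradicting atomicity. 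Finiteness of $\mathcal A(G_0)$ is then immediate, and finite generation of $\mathcal B(G_0)$ follows from Dickson's lemma applied to the inclusion $\mathcal B(G_0) \subset \mathcal F(G_0)$.

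For (2)(a), set $T = g_0^{-1} \bdot S$. An ordering of $S$ realizing $g_0 = g_{\tau(1)} \cdots g_{\tau(\ell)}$ produces $g_0^{-1} g_{\tau(1)} \cdots g_{\tau(\ell)} = 1_G$, so $T \in \mathcal B(G)$. For atomicity, I would assume $T = T_1 \bdot T_2$ with $T_1, T_2 \in \mathcal B(G)$; then $g_0^{-1}$ lies in exactly one factor, say $T_1$, which forces $T_2 \mid S$ in $\mathcal F(G)$ with $T_2 \in \mathcal B(G)$, and since $S$ is product-one free, $T_2$ must be trivial. Hence $T$ is an atom, and taking $|S| = \mathsf d(G)$ yields $\mathsf D(G) \ge \mathsf d(G)+1$.

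Part (2)(b) is the step I expect to require the most care. The inclusion $\Pi(S) \subset G \setminus \{1_G\}$ holds by hypothesis. For the reverse, fix $g \in G \setminus \{1_G\}$; maximality of $|S| = \mathsf d(G)$ forces $g^{-1} \bdot S$ to have a product-one subsequence, which must contain the extra term $g^{-1}$ (else it would already be a product-one subsequence of $S$). Writing this subsequence as $g^{-1} \bdot T'$ with $T' \mid S$, an ordering $x_1 \cdots x_k\, g^{-1}\, x_{k+1} \cdots x_m = 1_G$ of its terms rearranges to $x_{k+1} \cdots x_m x_1 \cdots x_k = g$, so $g \in \pi(T') \subset \Pi(S)$; the case $T' = 1_{\mathcal F(G)}$ is excluded because it would force $g = 1_G$. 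The maximum-characterization of $\mathsf d(G)$ follows directly since $\Pi(S) = G \setminus \{1_G\}$ already implies $S$ is product-one free.

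For (3), with $G = C_n = \langle g \rangle$, the sequence $g^{[n-1]}$ has subsequence-product set $\{g, g^2, \ldots, g^{n-1}\} \not\ni 1_G$, giving $\mathsf d(G) \ge n-1$, while the partial-products argument from (1) applied to any sequence of length $n$ yields $\mathsf d(G) \le n-1$. Combining with (1) and (2)(a) sandwiches $\mathsf D(G)$ between $n$ and $|G|=n$. The principal obstacle throughout is the rearrangement step in (2)(b): non-commutativity of $G$ means $g^{-1}$ cannot simply be moved to the end of the ordering, but the cyclic rotation above produces a valid alternate ordering of $T'$ whose product equals $g$.
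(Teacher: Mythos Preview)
Your proof is correct in every part. The paper itself does not give a proof of this lemma at all; it simply cites \cite[Lemma 3.1]{Cz-Do-Ge16a} with the remark that the proof is simple. Your write-up supplies exactly the standard arguments one would expect: the partial-products pigeonhole for $\mathsf D(G_0)\le |G|$, the factorization obstruction for atomicity in (2)(a), the maximality-forces-coverage argument in (2)(b), and the sandwich for cyclic groups in (3). The one point that genuinely needs care in the non-abelian setting---extracting $g\in\pi(T')$ from a product-one ordering of $g^{-1}\bdot T'$ in which $g^{-1}$ sits in the middle---you handle correctly via cyclic rotation of a product-one word, which is precisely the device the paper records just before Lemma~3.6. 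There is nothing to add.
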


\smallskip
The Davenport constant of finite abelian groups is a frequently studied invariant in Zero-sum Theory (for an overview see \cite{Ge-HK06a} and for recent progress  \cite{Sc11b, Sa-Ch14a}). For the Davenport constant of nonabelian groups we refer to \cite{Ge-Gr13a, Gr13b, Cz-Do-Sz17}.

\smallskip
\begin{lemma} \label{3.3}~
Let $G_0 \subset G$ be a subset. A submonoid $H \subset \mathcal B (G_0)$ is divisor-closed if and only if $H = \mathcal B (G_1)$ for a subset $G_1 \subset G_0$.
\end{lemma}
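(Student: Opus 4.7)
I will prove each direction separately. For the ``$\Leftarrow$'' direction, suppose $H = \mathcal B(G_1)$ for some $G_1 \subset G_0$. If $a \in H$ and $b \in \mathcal B(G_0)$ with $b \mid a$ in $\mathcal B(G_0)$, then writing $a = b \bdot c$ inside $\mathcal F(G_0)$ gives $\supp(b) \subset \supp(a) \subset G_1$; since $b$ is product-one, $b \in \mathcal B(G_1) = H$. This half is straightforward.

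For the ``$\Rightarrow$'' direction, set $G_1 = \bigcup_{S \in H} \supp(S) \subset G_0$; the inclusion $H \subset \mathcal B(G_1)$ is immediate from the definition of $G_1$. The substantive task is $\mathcal B(G_1) \subset H$. Because $H$ is a submonoid and $\mathcal B(G_1)$ is generated by $\mathcal A(G_1)$, it suffices to check $\mathcal A(G_1) \subset H$. Given such an atom $A$, the plan is to construct $W \in H$ with $A \mid W$ in $\mathcal B(G_0)$; divisor-closedness then forces $A \in H$.

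To build $W$, fix an ordering $A = g_1 \bdot \ldots \bdot g_\ell$ with $g_1 \cdots g_\ell = 1_G$. Since $\mathcal B(G_0)$ is atomic by Lemma \ref{3.2} and $H$ is divisor-closed in it, the atoms of $H$ coincide with $\mathcal A(\mathcal B(G_0)) \cap H$, and hence $G_1 = \bigcup_{B \in \mathcal A(H)} \supp(B)$. For each $i \in [1,\ell]$ choose an atom $B_i \in \mathcal A(H)$ containing $g_i$ as a term, and write $B_i = g_i \bdot C_i$ with $C_i \in \mathcal F(G_0)$. Since $B_i$ is product-one and any ordering realizing the product $1_G$ may be cyclically rotated to begin with $g_i$, the terms of $C_i$ admit an ordering whose product equals $g_i^{-1}$. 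Set $W = B_1 \bdot \ldots \bdot B_\ell \in H$ and $V = C_1 \bdot \ldots \bdot C_\ell \in \mathcal F(G_0)$, so that $W = A \bdot V$ in $\mathcal F(G_0)$.

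The main obstacle in the nonabelian setting is that divisibility in $\mathcal F(G_0)$ does not entail divisibility in $\mathcal B(G_0)$: a priori $V$ need not be product-one even though both $W$ and $A$ are. This is circumvented by a reversal trick: concatenate the $C_i$-orderings in the \emph{reverse} index order, obtaining the product $g_\ell^{-1} g_{\ell-1}^{-1} \cdots g_1^{-1} = (g_1 g_2 \cdots g_\ell)^{-1} = 1_G$. Hence $V \in \mathcal B(G_0)$, so $A \mid W$ in $\mathcal B(G_0)$, and divisor-closedness of $H$ inside $\mathcal B(G_0)$ forces $A \in H$, completing the proof.
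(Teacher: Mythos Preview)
Your proof is correct and follows the same overall strategy as the paper: set $G_1 = \bigcup_{S \in H} \supp(S)$, pick for each term $g_i$ of a given product-one sequence an element $B_i \in H$ containing $g_i$, and conclude from divisor-closedness that the sequence lies in $H$. The paper's proof simply asserts that $S$ divides $B_1 \bdot \ldots \bdot B_\ell$ without further comment, whereas you make explicit the one nontrivial point in the nonabelian setting---namely that the cofactor $V = C_1 \bdot \ldots \bdot C_\ell$ is itself product-one, via the reversal $g_\ell^{-1} \cdots g_1^{-1} = (g_1 \cdots g_\ell)^{-1} = 1_G$---so your argument is in fact more complete on this step. Your reduction to atoms $A \in \mathcal A(G_1)$ and to $B_i \in \mathcal A(H)$ is a harmless streamlining; the paper works directly with arbitrary $S \in \mathcal B(G_1)$ and $B_i \in H$, which is equally valid.
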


\begin{proof}
If $G_1 \subset G_0$, then clearly $\mathcal B (G_1) \subset \mathcal B (G_0)$ is a divisor-closed submonoid. Conversely, let $H \subset \mathcal B (G_0)$ be a divisor-closed submonoid. We set
\[
G_1 = \bigcup_{B \in H} \supp (B)
\]
and obtain that $H \subset \mathcal B (G_1)$. We have to show that equality holds. Let $S = g_1 \bdot \ldots \bdot g_{\ell} \in \mathcal B (G_1)$. Then for every $i \in [1, \ell]$ there is some $B_i \in H$ such that $g_i \in \supp (B_i)$. Then $B_1 \bdot \ldots \bdot B_{\ell} \in H$ and $S$ divides $B_1 \bdot \ldots \bdot B_{\ell}$, which implies that $S \in H$.
\end{proof}

\smallskip
\begin{proposition} \label{3.4}~
The following statements are equivalent{\rm \,:}
\begin{enumerate}
\item[(a)] $G$ is abelian.

\item[(b)] $\mathcal B (G)$ is a Krull monoid.

\item[(c)] $\mathcal B (G)$ is a transfer Krull monoid.
\end{enumerate}
\end{proposition}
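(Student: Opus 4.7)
I plan to prove the cycle (a) $\Rightarrow$ (b) $\Rightarrow$ (c) $\Rightarrow$ (a). The first two implications are the easier ones. For (a) $\Rightarrow$ (b), I write $G$ additively and introduce the sum map $\sigma \colon \mathcal F(G) \to G$, $g_1 \bdot \ldots \bdot g_\ell \mapsto g_1 + \ldots + g_\ell$, so that $\mathcal B(G) = \sigma^{-1}(0_G)$. If $S, T \in \mathcal B(G)$ with $S \mid T$ in $\mathcal F(G)$, then writing $T = S \bdot R$ with $R \in \mathcal F(G)$ one has $\sigma(R) = \sigma(T) - \sigma(S) = 0$, so $R \in \mathcal B(G)$ and $S \mid T$ holds in $\mathcal B(G)$ as well. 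The embedding $\mathcal B(G) \hookrightarrow \mathcal F(G)$ is thus a divisor homomorphism into a free abelian monoid, and criterion~(b) of the characterization recalled in Section~\ref{2} makes $\mathcal B(G)$ a Krull monoid. The implication (b) $\Rightarrow$ (c) is immediate, since the identity map is a transfer homomorphism.

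For (c) $\Rightarrow$ (a), I argue contrapositively: assuming $G$ non-abelian, I rule out transfer Krullness. The first observation is that $\mathcal B(G)$ fails to be completely integrally closed. Pick $a, b \in G$ with $ab \neq ba$; this forces $b \neq a^{-1}$. Set $S := a \bdot b \in \mathcal F(G)$. Then $\pi(S) = \{ab, ba\} \not\ni 1_G$, so $S \notin \mathcal B(G)$. On the other hand, for $N := \lcm(\ord(a), \ord(b))$ one has $a^N = b^N = 1_G$, so the ordering $a^{kN} \cdot b^{kN} = 1_G$ witnesses $S^{[kN]} = a^{[kN]} \bdot b^{[kN]} \in \mathcal B(G)$ for every $k \in \N$. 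Hence $S$ lies in $\widehat{\mathcal B(G)} \setminus \mathcal B(G)$, so $\mathcal B(G)$ is not completely integrally closed and in particular not Krull.

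To upgrade this to a refutation of the transfer Krull property itself, I would exploit the fact that any transfer homomorphism $\theta \colon \mathcal B(G) \to B$ to a Krull monoid $B$ preserves sets of lengths and carries atoms to atoms. Combining the power relations $S^{[kN]} \in \mathcal B(G)$ above with the balanced length-two factorization identity
\[
\bigl(a \bdot b \bdot (ab)^{-1}\bigr) \bdot \bigl(b \bdot a \bdot (ba)^{-1}\bigr) \;=\; \bigl(a \bdot b \bdot (ba)^{-1}\bigr) \bdot \bigl(b \bdot a \bdot (ab)^{-1}\bigr)
\]
among four pairwise distinct atoms of $\mathcal B(G)$ (a short direct check, using only $ab \neq ba$ and mild genericity of $a, b$), one can try to extract an incompatibility in the Krull target $B$. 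The main obstacle, and the step I expect to require the most care, is precisely this extraction: the transfer property constrains only the lengths of factorizations and not which atoms appear, so producing a set-of-lengths witness of non-transfer-Krullness is subtle. The cleanest route is likely to appeal to the finer characterization of transfer Krull monoids given in \cite{Ge16c}, under which the non-root-closedness demonstrated above is itself an immediate obstacle.
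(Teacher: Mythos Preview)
Your arguments for (a) $\Rightarrow$ (b) and (b) $\Rightarrow$ (c) are correct and coincide with the paper's. The implication (c) $\Rightarrow$ (a), however, has a genuine gap.

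What you actually prove is that if $G$ is non-abelian then $\mathcal B(G)$ is not completely integrally closed. This yields $\neg$(a) $\Rightarrow$ $\neg$(b), which you already had as (a) $\Leftrightarrow$ (b); it says nothing about (c). Transfer Krull monoids need not be root closed or completely integrally closed---the whole point of the notion is that it is an \emph{arithmetic} condition (existence of a transfer homomorphism to a Krull monoid), not an integrality condition, and the paper itself cites examples of transfer Krull monoids that are not Krull. So the final sentence, invoking a characterization from \cite{Ge16c} under which ``non-root-closedness is itself an immediate obstacle'', is not supported: no such characterization exists.

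Your attempted length-based obstruction also collapses. Since $\mathcal F(G)$ is free \emph{abelian}, one has $a \bdot b = b \bdot a$ as sequences, so $a \bdot b \bdot (ab)^{-1}$ and $b \bdot a \bdot (ab)^{-1}$ are the same element of $\mathcal F(G)$. Your four ``pairwise distinct atoms'' are only two, and the displayed identity is $U \bdot V = U \bdot V$, which carries no information.

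The paper's argument for (c) $\Rightarrow$ (a) exploits root-closedness of the \emph{target} rather than its failure in the source. Given non-commuting $g,h \in G$ and a transfer homomorphism $\theta \colon \mathcal B(G) \to \mathcal B(G_0)$ with $G_0$ a subset of an abelian group, one considers the atom
\[
U \;=\; g \bdot h \bdot g^{-1} \bdot (gh^{-1}g^{-1}) \in \mathcal A(G),
\]
so that $\theta(U) \in \mathcal A(G_0)$. Setting $m = \ord(hgh^{-1}g^{-1})$, one has the factorization $U^{[m]} = (g \bdot g^{-1})^{[m]} \bdot \bigl(h \bdot (gh^{-1}g^{-1})\bigr)^{[m]}$ in $\mathcal B(G)$. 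Applying $\theta$ and using that $\mathcal B(G_0) \hookrightarrow \mathcal F(G_0)$ is a divisor homomorphism (hence $\mathcal B(G_0)$ is root closed), $\theta(g \bdot g^{-1})^{[m]} \mid \theta(U)^{[m]}$ forces $\theta(g \bdot g^{-1}) \mid \theta(U)$ in $\mathcal B(G_0)$. Both being atoms, $\theta(g \bdot g^{-1}) = \theta(U)$, whence $\theta\bigl((h \bdot gh^{-1}g^{-1})^{[m]}\bigr) = 1_{\mathcal F(G_0)}$, contradicting $\theta^{-1}(1) = \{1\}$. The missing idea in your attempt is precisely this: push the ``eventual divisibility'' through $\theta$ and cash it in as actual divisibility in the root-closed target.
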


\begin{proof}
The implications  (a) $\Rightarrow$ (b) and  (b) $\Rightarrow$ (c) are well-known and immediate. Indeed, if $G$ is abelian, then the embedding $\mathcal B (G) \hookrightarrow \mathcal F (G)$ is a divisor homomorphism whence $\mathcal B (G)$ is Krull. By definition, every Krull monoid is a transfer Krull monoid.

(c) $\Rightarrow$ (a) Assume to the contrary that $G$ is not abelian but $\mathcal B (G)$ is a transfer Krull monoid. Thus there is a transfer homomorphism $\theta_1 \colon \mathcal B (G) \to H$, where $H$ is a Krull monoid. Since there is a transfer homomorphism $\theta_2 \colon H \to \mathcal B (G_0)$, where $G_0 \subset \mathcal C (H)$ is a subset of the class group, and since the composition of transfer homomorphisms is a transfer homomorphism, we obtain a transfer homomorphism $\theta \colon \mathcal B (G) \to \mathcal B (G_0)$, where $G_0$ is a subset of an abelian group.

Since $G$ is non-abelian, there exist $g,h \in G$ such that $gh \neq hg$. Consider the sequence
\[
  U = g\bdot h\bdot g^{-1}\bdot (gh^{-1}g^{-1}) \in \mathcal A(G).
\]
From \cite[Proposition 3.2.3]{Ge-HK06a}, we have that $\theta(U) \in \mathcal A(G_{0})$, say $\theta(U) = a_{1}\bdot \ldots \bdot a_{\ell}$, where $a_{i} \in G_{0}$ for all $i \in [1,\ell]$.
Let $m = \textnormal{ord}\,(hgh^{-1}g^{-1}) \in \N$. Then
\[
  U^{[m]} = (g\bdot g^{-1})^{[m]}\bdot (h\bdot (gh^{-1}g^{-1}))^{[m]},
\]
and hence it follows that
\[
  \begin{aligned}
   (a_{1}\bdot \ldots \bdot a_{\ell})^{[m]} = &\,\, \theta(U^{[m]})\\
                                            = &\,\, \big(\theta(g\bdot g^{-1})\big)^{[m]}\bdot \theta\big( (h\bdot (gh^{-1}g^{-1}))^{[m]} \big).
   \end{aligned}
\]
Since $\mathcal B(G_{0}) \hookrightarrow \mathcal F(G_{0})$ is a divisor homomorphism (and so $\mathcal B (G_0)$ is root closed), the fact that
$\big(\theta(g\bdot g^{-1})\big)^{[m]}$ divides $ (a_{1}\bdot \ldots \bdot a_{\ell})^{[m]}$ in $\mathcal B (G_0)$ implies that $ \theta(g\bdot g^{-1})$ divides $ a_{1}\bdot \ldots \bdot a_{\ell}$ in $\mathcal B(G_{0})$.
Since $\theta(g\bdot g^{-1}) \und a_{1}\bdot \ldots \bdot a_{\ell} \in \mathcal A(G_{0})$, it follows that $\theta(g\bdot g^{-1}) = a_{1}\bdot \ldots \bdot a_{\ell} = \theta(U)$.
Thus $\theta\big( (h\bdot (gh^{-1}g^{-1}))^{[m]} \big) = 1_{\mathcal F(G_{0})}$, a contradiction.
\end{proof}

\smallskip
Clearly, if $|G|\le 2$, then $\mathcal B (G)$ is factorial whence it is both a Krull monoid (with trivial class group) and a C-monoid (with trivial class semigroup). In order to avoid trivial case distinctions, we exclude this case whenever convenient. By Proposition \ref{3.4}, $\mathcal B (G)$ is not Krull when $G$ is not abelian. The next proposition reveals that $\mathcal B (G)$ is a C-monoid in all cases and determines the class group of its complete integral closure.

\smallskip
\begin{proposition} \label{3.5}~
Suppose that $|G|\ge 3$.
\begin{enumerate}
\item If $G_0 \subset G$ is a subset, then $\mathcal B (G_0) \subset \mathcal F (G_0)$ is cofinal and $\mathcal B (G_0)$ is a {\rm C}-monoid.

\smallskip
\item   The embedding $\widehat{\mathcal B (G)} \hookrightarrow \mathcal F (G)$  is a divisor theory and the map
      \[
    \begin{aligned}
    \Phi \colon \mathcal C(\widehat{\mathcal B (G)}) = \mathsf q \big( \mathcal F (G) \big)/ \mathsf q \big( \mathcal B (G) \big) & \quad \longrightarrow \quad G / G' \\
         S \mathsf q (\mathcal B (G)) \quad  & \quad \longmapsto \quad g G' \,,
    \end{aligned}
    \]
    where $S \in \mathcal F (G)$ and $g \in \pi (S)$, is an isomorphism. Clearly, $\mathsf v_g \big( \mathcal B (G) \big) = \N_0$ for all $g \in G$.

\smallskip
\item   There is an epimorphism $\mathcal C \big( \mathcal B (G), \mathcal F (G) \big) \to G/G'$.
\end{enumerate}
\end{proposition}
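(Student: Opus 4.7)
The plan is to establish the three parts in order, with the real content being the isomorphism $\Phi$ in (2).

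For (1), cofinality is handled by a direct construction: given $S \in \mathcal F(G_0)$, set $B = \bulletprod{g \in \supp(S)} g^{[m_g \cdot \ord(g)]}$ with $m_g$ chosen so that $m_g \ord(g) \ge \mathsf v_g(S)$; each factor $g^{[\ord(g)]}$ has product $1_G$, so $B \in \mathcal B(G_0)$ and $S$ divides $B$ in $\mathcal F(G_0)$. The unit condition $\mathcal B(G_0) \cap \mathcal F(G_0)^\times = \mathcal B(G_0)^\times$ is trivial, because $\mathcal F(G_0)^\times = \{1\}$ and $\mathcal B(G_0)$ is reduced by Lemma \ref{3.2}(1), and finiteness of the reduced class semigroup $\mathcal C^*(\mathcal B(G_0), \mathcal F(G_0))$ follows from the fact that $\mathcal B(G_0)$ is a finitely generated submonoid of the free abelian monoid on the finite set $G_0$, via the structure results for finitely generated monoids in \cite[Chapter 2]{Ge-HK06a}.

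For (2), the minimality condition $\mathsf v_g(\mathcal B(G)) = \N_0$ of Lemma \ref{2.2} holds for every $g \in G$ since $g^{[k]} \bdot (g^{-1})^{[k]} \in \mathcal B(G)$ for all $k \in \N_0$. Lemma \ref{2.2}, applied to the C-monoid $\mathcal B(G) \subset \mathcal F(G)$ from (1), then delivers that $\widehat{\mathcal B(G)}$ is Krull and that its inclusion into $\mathcal F(G)$ (which is exactly the map $\partial$) is a divisor theory, with class group $\mathsf q(\mathcal F(G))/\mathsf q(\widehat{\mathcal B(G)}) = \mathsf q(\mathcal F(G))/\mathsf q(\mathcal B(G))$ (the root closure has the same quotient group as $\mathcal B(G)$). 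For $\Phi$ itself, well-definedness and the homomorphism property rest on two elementary observations: (i) $\pi(S_1 \bdot S_2) G' = \pi(S_1) \pi(S_2) G'$, since reordering modulo $G'$ takes place inside the abelian quotient $G/G'$; and (ii) $\pi(A) \subset G'$ for any $A \in \mathcal B(G)$, because $1_G \in \pi(A)$ and $\pi(A)$ lies in a single $G'$-coset. Surjectivity is immediate via the length-one sequences $S = g$.

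Injectivity is the principal obstacle. If $\pi(S) \subset G'$ and $g \in \pi(S)$, then $S \bdot g^{-1} \in \mathcal B(G)$, so in $\mathsf q(\mathcal F(G))$ one has $S = (S \bdot g^{-1})/(g^{-1})$, and the problem reduces to showing that for every $h \in G'$ the length-one sequence $h$ lies in $\mathsf q(\mathcal B(G))$. Writing $h = c_1 \cdots c_m$ as a product of commutators $c_i = [x_i, y_i]$, the identity $x_i \bdot y_i \bdot x_i^{-1} \bdot y_i^{-1} \bdot c_i^{-1} \in \mathcal B(G)$ together with the trivial memberships $x_i \bdot x_i^{-1}, y_i \bdot y_i^{-1} \in \mathcal B(G)$ yields $c_i \in \mathsf q(\mathcal B(G))$; then the sequence $c_1 \bdot \ldots \bdot c_m \bdot h^{-1} \in \mathcal B(G)$ yields $h \in \mathsf q(\mathcal B(G))$. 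Part (3) is then a formality: cofinality from (1) together with Lemma \ref{2.1}(1) produces the desired epimorphism $\mathcal C(\mathcal B(G), \mathcal F(G)) \to \mathsf q(\mathcal F(G))/\mathsf q(\mathcal B(G))$, and (2) identifies the target with $G/G'$.
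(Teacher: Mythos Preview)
Your overall strategy is sound and in fact supplies more detail than the paper, which defers the verification of the isomorphism $\Phi$ to \cite[Theorem 3.2]{Cz-Do-Ge16a}. Your injectivity argument for $\Phi$ (reducing to showing each $h \in G'$, viewed as a length-one sequence, lies in $\mathsf q(\mathcal B(G))$, and then expressing $h$ through commutators) is correct and self-contained. Your route to part~(3) via Lemma~\ref{2.1}(1) is also perfectly valid; the paper instead invokes Lemma~\ref{2.2}(2), but since $\mathcal F(G)^\times = \{1\}$ the two class semigroups $\mathcal C$ and $\mathcal C^*$ coincide and both arguments work. Your cofinality construction (powers of $g^{[\ord(g)]}$) is a genuine improvement for arbitrary $G_0$: the paper's construction $S' = g^{-1}\bdot S$ with $g \in \pi(S)$ tacitly needs $g^{-1} \in G_0$.

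There is, however, one concrete gap. Your justification that $\mathsf v_g\big(\mathcal B(G)\big) = \N_0$ ``since $g^{[k]}\bdot (g^{-1})^{[k]} \in \mathcal B(G)$'' fails when $\ord(g)=2$: then $g^{-1}=g$, the sequence is $g^{[2k]}$, and you only obtain $2\N_0 \subset \mathsf v_g\big(\mathcal B(G)\big)$. This is precisely where the hypothesis $|G|\ge 3$ enters: choosing any $h \in G \setminus \{1_G,g\}$, the atom $g \bdot h \bdot (gh)^{-1} \in \mathcal B(G)$ has $\mathsf v_g$ equal to~$1$, and combined with $g^{[2]}$ this gives all of $\N_0$. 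The paper handles this case distinction explicitly. Without it, you have neither established the assertion $\mathsf v_g\big(\mathcal B(G)\big)=\N_0$ in the statement nor verified the numerical-monoid hypothesis of Lemma~\ref{2.2} (the set $2\N_0$ is not cofinite in $\N_0$), so the appeal to that lemma is not yet justified.
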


\begin{proof}
1. and 2. Let $G_0 \subset G$. If $S = g_1 \bdot \ldots \bdot g_{\ell} \in \mathcal F (G_0)$ and $g \in \pi (S)$, then $S' = g^{-1} \bdot S \in \mathcal B (G_0)$, $S \mid S'$, and hence $\mathcal B (G_0) \subset \mathcal F (G_0)$ is cofinal. Let $k \in \N_0$. If $g \in G$ with $\ord (g) > 2$, then $U= g \bdot g^{-1} \in \mathcal B (G)$, $\mathsf v_g (U^{[k]}) = k$. If $g=1_G$, then $\mathsf v_g ( g^{[k]})=k$. If $\ord (g)=2$, then there is an $h \in G \setminus \{1_G, g\}$ and $U = g \bdot h \bdot (gh)^{-1} \in \mathcal B (G)$ and $\mathsf v_g (U^{[k]}) = k$. Thus in all cases we have $\mathsf v_g ( \mathcal B (G) ) = \N_0$.

It is easy to check that $\Phi$ is an isomorphism (details can be found in \cite[Theorem 3.2]{Cz-Do-Ge16a}). Since $\mathcal C(\widehat{\mathcal B (G)})$ is finite, $\mathcal C(\widehat{\mathcal B (G_0)})$ is finite and since $\mathcal B (G_0)$ is finitely generated, it is a C-monoid by \cite[Proposition 4.8]{Ge-Ha08b}.

\smallskip
3. This follows from 2. and from Lemma \ref{2.2}.2.
\end{proof}

\smallskip
We start with the study of the class semigroup and recall that, by definition, for two sequences $S, S' \in \mathcal F (G)$ the following statements are equivalent:
\begin{itemize}
\item $S \sim_{\mathcal B (G)} S'$.

\item For all $T \in \mathcal F (G)$, we have $S \bdot T \in \mathcal B (G)$ if and only if $S' \bdot T \in \mathcal B (G)$.

\item For all $T \in \mathcal F (G)$, we have $1_G \in \pi ( S \bdot T)$ if and only if $1_G \in \pi ( S' \bdot T)$.
\end{itemize}
If $S = g_1 \bdot \ldots \bdot g_{\ell} \in \mathcal B (G)$ such that $1_G = g_1 \ldots g_{\ell}$, then $1_G = g_i \ldots g_{\ell}g_1 \ldots g_{i-1}$ for every $i \in [1, \ell]$. We will use this simple fact without further mention. Moreover, $\sim$ means $\sim_{\mathcal B (G)}$ and we write $[S] = [S]_{\mathcal B (G)}^{\mathcal F (G)}$.

\smallskip
\begin{lemma} \label{3.6}~
Let $S \in \mathcal F (G)$.
\begin{enumerate}
\item If $S' \in \mathcal F (G)$ such that $S \sim S'$, then $\pi(S) = \pi(S')$.

\smallskip
\item Let $S' \in \mathcal F (G)$. In the following cases, $S \sim S'$ if and only if $\pi(S) = \pi(S')${\rm \,:}
      \begin{enumerate}
      \item[(a)] $S \und S'$ are sequences over the center of $G$.

      \smallskip
      \item[(b)] There is $g \in \pi(S)$ such that $\pi(S) = gG'$.
      \end{enumerate}

\smallskip
\item If $g, h \in G$ with $g \neq h$, then $g \nsim h$. In particular, $|G| \leq |\mathcal C \big(\mathcal B (G), \mathcal F (G)\big)|$.

\smallskip
\item If $g \in \mathsf{Z}(G) \und h \in G$, then $g\bdot h \sim gh$.

\smallskip
\item $|\pi (S)|=1$ if and only if $\langle \supp (S) \rangle$ is abelian.
\end{enumerate}
\end{lemma}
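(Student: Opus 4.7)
The plan is to leverage three common tools for all five parts: (i) cyclic rotation --- if $g_1 \cdots g_n = 1_G$ then $g_i \cdots g_n g_1 \cdots g_{i-1} = 1_G$ for each $i$; (ii) the fact cited in the paper that $\pi(S)$ always lies inside a single $G'$-coset; and (iii) the reformulation stated just before the lemma, namely that $S \sim S'$ iff for every $T \in \mathcal F(G)$ one has $1_G \in \pi(S \bdot T)$ exactly when $1_G \in \pi(S' \bdot T)$. For (1), I would fix $g \in \pi(S)$, note that $g^{-1} \bdot S \in \mathcal B(G)$, apply equivalence to get $g^{-1} \bdot S' \in \mathcal B(G)$, and cyclically rotate the resulting zero-one ordering so that $g^{-1}$ stands at the end. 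This yields an ordering of $S'$ whose product equals $g$, so $g \in \pi(S')$, and symmetry finishes.

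For (2) the nontrivial direction takes $\pi(S) = \pi(S')$ as input and analyses $\pi(S \bdot T)$. Under (a) centrality of the terms of $S$ immediately forces $\pi(S) = \{s\}$ to be a singleton and lets me commute all terms of $S$ to the front of any ordering, so that $\pi(S \bdot T) = s \cdot \pi(T)$; the condition $1_G \in \pi(S \bdot T)$ then depends only on $s$. Under (b) the hypothesis is that $\pi(S) = gG'$ fills an entire $G'$-coset, and I would establish the key claim
\[
\pi(S \bdot T) = gaG' \quad \text{for any } a \in \pi(T).
\]
One inclusion is the universal coset-containment of $\pi(S \bdot T)$; for the reverse I append a fixed $a \in \pi(T)$ to each $s$ as $s$ ranges over $gG' = \pi(S)$ and use normality of $G'$ to identify $(gG') a = g(G'a) = g(aG') = gaG'$. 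Under the symmetric hypothesis on $S'$ the same coset is produced, and so membership of $1_G$ is equivalent on the two sides.

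For (3), if $g \sim h$ then (1) gives $\{g\} = \pi(g) = \pi(h) = \{h\}$, so the assignment $g \mapsto [g]$ embeds $G$ into $\mathcal C(\mathcal B(G), \mathcal F(G))$. For (4), since $g$ is central I would rewrite any ordering of $g \bdot h \bdot T$ (without altering the product) so that $g$ sits immediately before $h$, then merge $g,h$ into the single term $gh$ of $gh \bdot T$; the reverse direction is immediate. For (5), if $\langle \supp(S)\rangle$ is abelian then every permutation of the terms of $S$ yields the same product, whence $|\pi(S)|=1$; conversely, for any two terms $g, h \in \supp(S)$ I would write $S = g \bdot h \bdot R$ and compare the orderings $(g, h, r_1, \ldots)$ and $(h, g, r_1, \ldots)$, whose common product forces $gh = hg$. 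The hard part will be the coset-filling equality in (2)(b): this is the only step that goes beyond cyclic rearrangement and actually exploits normality of the commutator subgroup.
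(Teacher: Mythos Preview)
Your proposal is correct and follows essentially the same approach as the paper. The only notable variation is in part 2(b): you compute $\pi(S\bdot T)=gaG'$ as a full coset (using normality of $G'$) and conclude that it depends only on $\pi(S)$, whereas the paper argues the implication directly by picking $t\in\pi(T)$, writing $tg=e\in G'$, and observing $1_G=t(ge^{-1})\in\pi(T)\pi(S')\subset\pi(T\bdot S')$; both arguments hinge on the same normality fact and are equivalent in substance.
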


\begin{proof}
1.  Suppose that  $S' \in \mathcal F (G)$ with $S \sim S'$. Then for every $g \in G$ we obtain that
\[
g \in \pi(S) \quad \Leftrightarrow \quad g^{-1}\bdot S \in \mathcal B (G) \quad \Leftrightarrow \quad g^{-1}\bdot S' \in \mathcal B (G) \quad \Leftrightarrow \quad g \in \pi(S') \,,
\]
which implies that $\pi(S) = \pi(S')$.

\smallskip
2. From 1., it remains to verify the reverse implication. Suppose that $\pi(S) = \pi(S')$.

\smallskip
(a) Since $S, S' \in \mathcal F (\mathsf{Z}(G))$, we have $|\pi(S)| = |\pi(S')| = 1$, say $\pi(S) = \pi(S') = \{g\}$. Then for  every $T \in \mathcal F (G)$ we have
\[
  \pi(S\bdot T) = g\pi(T) \quad \und \quad \pi(S'\bdot T) = g\pi(T) \,,
\]
which implies that $S \sim S'$.

\smallskip
(b) Suppose that there is a $g \in \pi(S)$ such that $\pi(S) = gG'$. Let $T \in \mathcal F (G)$ such that $T\bdot S \in \mathcal B (G)$. Then we have
\[
  \pi(T)\pi(S) \subset \pi(T\bdot S) \subset G'.
\]
Since $\pi(S) = gG'$, there are $t \in \pi(T)$ and $e \in G'$ such that $tg = e$. Hence we obtain that
\[
  1_G = ee^{-1} = tge^{-1} \in \pi(T)\pi(S) = \pi(T)\pi(S') \subset \pi(T\bdot S').
\]
It follows that $T\bdot S' \in \mathcal B (G)$. By symmetry, we infer that $S \sim S'$.

\smallskip
3. Let $g, h \in G$. Then $g, h \in \mathcal F (G)$ and if  $g \sim h$, then 1. implies that  $\{g\} = \pi (g) = \pi (h) = \{h\}$.
Therefore, $g \neq h$ implies that  $g \nsim h$, and hence we obtain that $|G| \leq |\mathcal C \big(\mathcal B (G), \mathcal F (G)\big)|$.

\smallskip
4. This follows from the fact that $g \in \mathsf{Z}(G)$.

\smallskip
5. Obvious.
\end{proof}

\smallskip
\begin{lemma} \label{3.7}~
Let $S \in \mathcal F (G)$.
\begin{enumerate}
\item If $[S]$ is an idempotent of $\mathcal C \big(\mathcal B (G), \mathcal F (G)\big)$, then $\pi(S) \subset G'$ is a subgroup.

\smallskip
\item $[1_{\mathcal F (G)}] = \mathcal B \big(\mathsf{Z}(G)\big)$. In particular, if $g \in \mathsf{Z}(G)$, then $g^{[\ord (g)]} \sim 1_{\mathcal F (G)}$.

\smallskip
\item $[S]$ is the smallest idempotent in $\mathsf E \Big(\mathcal C \big(\mathcal B (G), \mathcal F (G)\big)\Big)$ (with respect to the Rees order) if and only if $\pi(S) = G'$.
\end{enumerate}
\end{lemma}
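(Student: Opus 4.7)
My plan is to verify the three items in order, repeatedly using Lemma \ref{3.6} and the fact that $\pi(T)$ lies in a single $G'$-coset for every $T\in\mathcal F(G)$. For part 1, $[S]$ idempotent means $S\bdot S\sim S$, so Lemma \ref{3.6}.1 gives $\pi(S\bdot S)=\pi(S)$. Writing $\pi(S)\subset gG'$, concatenating two orderings of $S$ shows $\pi(S\bdot S)\subset g^2G'$, so $gG'=g^2G'$ forces $g\in G'$ and hence $\pi(S)\subset G'$. Closure under multiplication is immediate from $\pi(S)\pi(S)\subset\pi(S\bdot S)=\pi(S)$, and a nonempty finite multiplicatively closed subset of a group is a subgroup.

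For part 2, the inclusion $\mathcal B(\mathsf Z(G))\subset[1_{\mathcal F(G)}]$ is immediate from Lemma \ref{3.6}.2(a), since any $S\in\mathcal B(\mathsf Z(G))$ lies in $\mathcal F(\mathsf Z(G))$ and satisfies $\pi(S)=\{1_G\}=\pi(1_{\mathcal F(G)})$. For the reverse, suppose $S\sim 1_{\mathcal F(G)}$. Testing against $T=1_{\mathcal F(G)}$ yields $1_G\in\pi(S)$; testing against $T=h$ with $h\neq 1_G$ yields $T\notin\mathcal B(G)$, hence $1_G\notin\pi(S\bdot h)$, and the ordering that puts $h$ last forces $h^{-1}\notin\pi(S)$, so altogether $\pi(S)=\{1_G\}$. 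To get $\supp(S)\subset\mathsf Z(G)$, fix $g\in\supp(S)$ and write $S=g\bdot S_1$; the cases $|S|\leq 1$ are trivial, and for $|S|\geq 2$ the relation $\pi(S)=\{1_G\}$ forces $\pi(S_1)=\{g^{-1}\}$. For arbitrary $h\in G$ I then test against $T=h\bdot(gh^{-1}g^{-1})$: its two products $[h,g]$ and $[g,h^{-1}]$ show $T\in\mathcal B(G)$ iff $gh=hg$, while the ordering $g\cdot h\cdot\sigma\cdot gh^{-1}g^{-1}$ of $S\bdot T$ (with $\sigma$ any ordering of $S_1$) has product $ghg^{-1}\cdot gh^{-1}g^{-1}=1_G$, so $S\bdot T\in\mathcal B(G)$; applying $S\sim 1_{\mathcal F(G)}$ forces $T\in\mathcal B(G)$, giving $gh=hg$. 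The ``in particular'' statement follows since $g^{[\ord(g)]}\in\mathcal B(\mathsf Z(G))$ whenever $g\in\mathsf Z(G)$.

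For part 3, the crux is to exhibit an $S_0\in\mathcal F(G)$ with $\pi(S_0)=G'$. For each $x\in G'$, write $x=[g_1,h_1]\cdots[g_{k_x},h_{k_x}]$ and set $T_x=g_1\bdot h_1\bdot g_1^{-1}\bdot h_1^{-1}\bdot\ldots\bdot g_{k_x}\bdot h_{k_x}\bdot g_{k_x}^{-1}\bdot h_{k_x}^{-1}$; then $T_x\in\mathcal B(G)$ and $x\in\pi(T_x)$. Taking $S_0=\bulletprod{x\in G'}T_x$ and writing $S_0=T_x\bdot R_x$, the product-one remainder $R_x$ gives $x\in\pi(T_x)\pi(R_x)\subset\pi(S_0)$, which combined with the coset inclusion $\pi(S_0)\subset G'$ yields $\pi(S_0)=G'$. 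Lemma \ref{3.6}.2(b) applied with $1_G\in G'$ then shows that every $S$ with $\pi(S)=G'$ satisfies $S\bdot S\sim S$, so such $[S]$ is idempotent. For ``$\Leftarrow$'': if $\pi(S)=G'$ and $[T]$ is any idempotent, part 1 gives $\pi(T)\subset G'$, so $\pi(S\bdot T)\supset\pi(S)\cdot\pi(T)=G'\cdot\pi(T)=G'$, and the coset inclusion yields $\pi(S\bdot T)=G'=\pi(S)$; Lemma \ref{3.6}.2(b) then gives $S\bdot T\sim S$, i.e.\ $[S]\leq[T]$. For ``$\Rightarrow$'': smallness of $[S]$ applied to the idempotent $[S_0]$ gives $S\bdot S_0\sim S$, so $\pi(S)=\pi(S\bdot S_0)\supset\pi(S)\cdot G'=G'$ (using $\pi(S)\subset G'$ nonempty); combined with $\pi(S)\subset G'$ from part 1, this forces $\pi(S)=G'$.

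The main technical obstacle is producing a sequence $S_0$ with $\pi(S_0)=G'$: one has to arrange that any prescribed element of $G'$ can be realized as an ordered product while the remaining terms cancel. Reserving a separate commutator block $T_x$ per element $x\in G'$ makes the construction transparent, though a sharper choice such as $\bulletprod{g\in G}(g\bdot g^{-1})^{[N]}$ for large $N$ works equally well.
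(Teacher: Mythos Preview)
Your proof is correct and follows essentially the same strategy as the paper: Lemma~\ref{3.6}.1 for part~1, a test-sequence argument for the reverse inclusion in part~2, and the commutator-block construction of a sequence with $\pi=G'$ together with Lemma~\ref{3.6}.2(b) for part~3. The only noteworthy differences are cosmetic: in part~2 you first establish $\pi(S)=\{1_G\}$ separately (the paper gets this directly from Lemma~\ref{3.6}.1) and you use the test sequence $T=h\bdot(gh^{-1}g^{-1})$ where the paper uses $T=(hg_1)\bdot(h^{-1}g_1^{-1})$; in part~3 your converse applies $[S]\le[S_0]$ directly, whereas the paper observes that the forward direction makes $[S_0]$ the smallest idempotent, forcing $S\sim S_0$.
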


\begin{proof}
1. Suppose that $[S]$ is an idempotent of $\mathcal C \big(\mathcal B (G), \mathcal F (G)\big)$. Then $[S]=\big[S^{[2]}\big]$ whence $S \sim S^{[2]}$, and Lemma \ref{3.6}.1 implies that
\[
  \pi(S)\pi(S) \subset \pi\big(S^{[2]}\big) = \pi(S) \,.
\]
Thus $\pi(S) \subset G'$ is a subgroup.

\smallskip
2. Suppose that $S \in \mathcal B \big(\mathsf{Z}(G)\big)$. Then  $\pi(S) = \{1_{G}\}$, and for all $T \in \mathcal F (G)$ we have
\[
  \pi(S\bdot T) = \pi(T) = \pi ( 1_{\mathcal F (G)} \bdot T) \,,
\]
whence $S \sim 1_{\mathcal F (G)}$.

Conversely, suppose  that $S \sim 1_{\mathcal F (G)}$. Then $S \in \mathcal B (G)$, and we set
$S = g_{1}\bdot \ldots \bdot g_{\ell}$ such that $g_{1}\ldots g_{\ell} = 1_{G}$.
Assume to the contrary that there is an $i \in [1, \ell]$ such that $g_{i} \notin \mathsf{Z}(G)$, say $i=1$.
Then there is an element $h \in G$ such that $hg_{1} \neq g_{1}h$. Then $T = (hg_{1})\bdot (h^{-1}g_{1}^{-1}) \in \mathcal F (G) \setminus \mathcal B (G)$, but

\[
  1_{G} = g_{1}(hg_{1})(g_{2}\ldots g_{\ell})(h^{-1}g_{1}^{-1}) \in \pi(T \bdot S) \,.
\]
Since $S \sim 1_{\mathcal F (G)}$, we infer that $1_G \in \pi ( T \bdot 1_{\mathcal F (G)}) = \pi (T)$, a contradiction.

In particular, if $g \in \mathsf Z (G)$, then $g^{[\ord (g)]} \in \mathcal B \big(\mathsf{Z}(G)\big)$ and hence $g^{[\ord (g)]} \sim 1_{\mathcal F (G)}$.

\smallskip
3. First, we suppose that $\pi(S) = G'$. Then $[S]$ is an idempotent by Lemma \ref{3.6}.2,
and it remains to verify  that $[S]$ is the smallest idempotent of $\mathcal C \big(\mathcal B (G), \mathcal F (G)\big)$. Let $S' \in \mathcal F (G)$ such that $[S']$ is an idempotent. We have to show that $S \sim S \bdot S'$. Since $\pi (S') \subset G'$ is a subgroup by 1., $S, S' \in \mathcal B (G)$, and since $\pi (S \bdot S')$ is a $G'$-coset, it follows that
\[
G' \subset \pi (S)\pi (S') \subset \pi (S \bdot S') \subset G' \quad \text{whence} \quad G' = \pi (S \bdot S') \,.
\]
Again Lemma \ref{3.6}.2 implies that $S \sim S \bdot S'$.

To show the converse, suppose that $[S]$ is the smallest idempotent. We construct an element $S' \in \mathcal F (G)$ such that $\pi (S') = G'$. Then the proof above shows that $[S']$ is the smallest idempotent whence $S \sim S'$  and  $\pi (S)=\pi (S')=G'$.  We set $G' = \{ g_{1}, \ldots, g_{n}\}$, and for
each $i \in [1,n]$ we set
\[
g_{i} = \prod_{\nu=1}^{k_{i }} a_{i, \nu}b_{i,\nu}a^{-1}_{i, \nu}b^{-1}_{i, \nu } \,, \quad \text{ where $k_i \in \N$ and all } \quad  a_{i, \nu },b_{i, \nu} \in G \,,
\]
and define
\[
  S_{i} = \bulletprod{\nu \in [1,k_{i}]}(a_{i, \nu} \bdot b_{i, \nu} \bdot a^{-1}_{i, \nu} \bdot b^{-1}_{i, \nu}) \in \mathcal B (G) \,.
\]
Then obviously $\pi ( S_{1} \bdot \ldots \bdot S_{n})  = G'$.
\end{proof}

\smallskip
\begin{theorem} \label{3.8}~

\begin{enumerate}
\item Suppose that $G/G' = \{g_{0}G', \ldots, g_{k}G'\}$, and for each $i \in [0,k]$ let $S_i \in \mathcal F (G)$ such that $\pi(S_{i}) = g_{i}G'$.  Then the map
      \[
      \begin{aligned}
      G/G' & \quad \to \quad \big\{ [S_i] \mid i \in [0,k] \big\} = \mathcal C  \subset \mathcal C \big( \mathcal B (G), \mathcal F (G) \big) \\
      g_i G' & \quad \mapsto \quad [S_i]
      \end{aligned}
      \]
      is a group isomorphism. Thus $\mathcal C \big( \mathcal B (G), \mathcal F (G) \big)$ contains a subgroup isomorphic to the class group of $\widehat{\mathcal B (G)}$. Moreover, for any $i \in [0,k]$ and for any $S \in \mathcal F (G)$, we have $[S\bdot S_{i}] \in \mathcal C$.

\smallskip
\item The map
      \[
      \begin{aligned}
      \mathsf Z (G) & \quad \to \quad \mathcal C \big( \mathcal B (G), \mathcal F (G) \big)^{\times} \\
      g & \quad \mapsto \quad [g]
      \end{aligned}
      \]
      is a group isomorphism.
\end{enumerate}
\end{theorem}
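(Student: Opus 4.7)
For part 1, I will check that the prescribed map is well defined, a homomorphism, injective, and that its image is a subgroup. Since each $\pi(S_i) = g_iG'$ is a full $G'$-coset, Lemma \ref{3.6}.2(b) shows that $[S_i]$ depends only on the coset $g_iG'$, which makes the map well defined; combined with Lemma \ref{3.6}.1 it is also injective. For the homomorphism property I use that $\pi(S_i \bdot S_j)$ lies in a single $G'$-coset (as noted just before Definition \ref{3.1}) but contains $\pi(S_i)\pi(S_j) = g_ig_jG'$, so equals $g_ig_jG'$, and Lemma \ref{3.6}.2(b) identifies $[S_i \bdot S_j]$ with the class attached to $g_ig_jG'$. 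The identity of the image is $[S_0]$ (where $g_0G' = G'$), and the class attached to $g_i^{-1}G'$ provides an inverse of $[S_i]$, so $\mathcal C$ is indeed a subgroup. For the ``Moreover'' statement, given arbitrary $S \in \mathcal F(G)$ and any $s \in \pi(S)$, the inclusion $sg_iG' \subseteq \pi(S\bdot S_i)$ combined with the fact that $\pi(S\bdot S_i)$ lies in a single $G'$-coset forces $\pi(S\bdot S_i) = sg_iG'$, hence $[S\bdot S_i] \in \mathcal C$ by Lemma \ref{3.6}.2(b).

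For part 2, the plan has four steps. First, the map lands in the unit group: for $g \in \mathsf Z(G)$, Lemma \ref{3.7}.2 gives $g^{[\ord(g)]} \sim 1_{\mathcal F(G)}$, so $[g]^{\ord(g)}$ is the identity of $\mathcal C(\mathcal B(G), \mathcal F(G))$ and therefore $[g]$ is invertible. Second, the map is a homomorphism by Lemma \ref{3.6}.4, which gives $g\bdot h \sim gh$ whenever $g$ (or $h$) is central, so $[g][h] = [g \bdot h] = [gh]$. Third, injectivity is immediate from Lemma \ref{3.6}.3, which already records that distinct elements of $G$ give distinct classes.

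The main obstacle is surjectivity. Given an invertible $[S] \in \mathcal C(\mathcal B(G), \mathcal F(G))^{\times}$, pick $T \in \mathcal F(G)$ with $S \bdot T \sim 1_{\mathcal F(G)}$. By Lemma \ref{3.7}.2 this forces $S \bdot T \in \mathcal B(\mathsf Z(G))$, so in particular every term of $S$ lies in $\mathsf Z(G)$. Consequently $\langle \supp(S)\rangle \subset \mathsf Z(G)$ is abelian, Lemma \ref{3.6}.5 gives $\pi(S) = \{g\}$ for a single $g \in \mathsf Z(G)$, and since $\pi(g) = \{g\}$ too, Lemma \ref{3.6}.2(a) yields $S \sim g$, hence $[S] = [g]$. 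The delicate point is the very first step: extracting from the a priori weak relation $S \bdot T \sim 1_{\mathcal F(G)}$ the strong conclusion that every term of $S$ is central. This is precisely what Lemma \ref{3.7}.2 delivers, and once it is invoked the rest of the argument is formal.
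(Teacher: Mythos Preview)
Your proof is correct and takes essentially the same approach as the paper, invoking Lemmas~\ref{3.6} and~\ref{3.7} at the same key steps. The only minor addition in the paper's version is an explicit construction showing that sequences $S_i$ with $\pi(S_i)=g_iG'$ actually exist (namely $S_i=g_i\bdot S$ where $[S]$ is the smallest idempotent, using Lemma~\ref{3.7}.3), which you reasonably treat as part of the hypothesis.
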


\begin{proof}
1. We first verify the existence of such sequences $S_0, \ldots , S_k$. Since $\mathcal C \big(\mathcal B (G), \mathcal F (G)\big)$ is finite, the set $\mathsf E \Big(\mathcal C \big(\mathcal B (G), \mathcal F (G)\big)\Big)$ has the smallest element, say $[S]$. For each $i \in [0, k]$, we set $S_i = g_i \bdot S$. Since $\pi(S) = G'$ by Lemma \ref{3.7}.3, it follows that for each $i \in [0,k]$
\[
  g_iG' \subset \pi(g_i \bdot S) \subset g_iG' \,, \,\, \mbox{ whence } \, \pi(S_i) = \pi(g_i \bdot S) = g_iG' \,.
\]

We now use Lemma \ref{3.6}.2. If  $i, j \in [0,k]$, then $g_{i}g_{j} \in \pi(S_{i})\pi(S_{j}) \subset \pi(S_{i}\bdot S_{j})$, and hence $\pi(S_{i}\bdot S_{j}) = g_{i}g_{j}G' = g_{\ell}G' = \pi(S_{\ell})$ for some $\ell \in [0,k]$. Thus it follows that $\mathcal C$ is a subgroup of the class semigroup and the given map is an isomorphism. By Proposition \ref{3.5}.2, $G/G'$ is isomorphic to the class group of $\widehat{\mathcal B (G)}$.

Moreover, let $S \in \mathcal F (G)$, $g \in \pi (S)$, and $i \in [0,k]$. Then
\[
  gg_{i}G' = g\pi(S_{i}) \subset \pi(S)\pi(S_{i}) \subset \pi(S\bdot S_{i}) \,,
\]
whence $\pi(S\bdot S_{i}) = gg_{i}G' = g_{j}G' = \pi(S_{j})$ for some $j \in [0,k]$. Again by Lemma \ref{3.6}.2, we have $S\bdot S_{i} \sim S_{j}$, and thus $[S\bdot S_{i}] \in \mathcal C$.

\smallskip
2. Let $S \in \mathcal F (G)$ such that $[S] \in \mathcal C \big( \mathcal F (G), \mathcal B (G) \big)$ is invertible. Then there is an $S' \in \mathcal F (G)$ such that
\[
0_{\mathcal C ( \mathcal F (G), \mathcal B (G) )} = [1_{\mathcal F (G)}] = [S]+[S']= [S \bdot S'] \,,
\]
whence $S \bdot S' \sim 1_{\mathcal F (G)}$. Then Lemma \ref{3.7}.2 implies that $S \bdot S' \in \mathcal B \big( \mathsf Z (G) \big)$ whence $S,S' \in \mathcal F \big( \mathsf Z (G)\big)$. If $g \in \pi (S)$, then Lemma \ref{3.6}.2 implies that $S \sim g$. This proves that the given map is well-defined and surjective. Lemma \ref{3.6} (items 3 and 4) shows that the map is a monomorphism.
\end{proof}

\smallskip
Our next goal is a detailed investigation of the case where $|G'|=2$. We derive a couple of special properties which do not hold in general (Theorem \ref{3.10} and Remark \ref{3.11}).

\smallskip
\begin{lemma} \label{3.9}~
Suppose that $|G'| = 2$, and let $g \in G$ with $\ord(g)=n$.
\begin{enumerate}
\item We have $g \sim g^{[n+1]}$, and hence $\big[g^{[n]}\big] \in \mathcal C \big(\mathcal B (G), \mathcal F (G)\big)$ is idempotent.

\smallskip
\item If $k \in [1,n]$ is odd and $g^{k} \notin \mathsf{Z}(G)$, then $g^{[k]} \sim g^{k}$.

\smallskip
\item If $g, h \in G\setminus\mathsf{Z}(G)$ with $gh = hg$, then $g\bdot h \nsim gh$ provided that one of the following conditions holds{\rm \,:}
           \begin{enumerate}
           \smallskip
           \item[(a)] $gh \in \mathsf{Z}(G)$.

           \smallskip
           \item[(b)] There is $g_{0} \in G$ such that $g_{0}g \neq gg_{0} \und g_{0}h \neq hg_{0}$.
           \end{enumerate}
\end{enumerate}
\end{lemma}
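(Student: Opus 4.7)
My plan rests on a structural observation specific to $|G'|=2$: writing $G' = \{1_G, z\}$, normality of $G'$ forces $z$ to be central (any conjugate of $z$ is a non-identity element of $G'$, hence $z$ itself), so $G' \subset \mathsf Z(G)$. Consequently, for every $S \in \mathcal F(G)$, passing to the abelian quotient $G/G'$ shows $\pi(S)$ sits in a single $G'$-coset. Combining this with Lemma \ref{3.6}.5, I would extract the following dichotomy: $\pi(S)$ is a singleton if $\langle \supp(S) \rangle$ is abelian, and $\pi(S)$ is the full two-element coset otherwise (one swap of two noncommuting support-terms shifts the product by $z$). This dichotomy is the engine of all three parts.

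For part 1, I would establish the stronger statement $\pi(T \bdot g) = \pi(T \bdot g^{[n+1]})$ for every $T \in \mathcal F(G)$; the equivalence $g \sim g^{[n+1]}$ then follows from the definition of $\sim$. Both sequences share support $\supp(T) \cup \{g\}$, so the dichotomy fires identically; and since $g^{n+1} = g$, they lie in the same $G'$-coset. For the idempotency of $[g^{[n]}]$, the same comparison between $T \bdot g^{[n]}$ and $T \bdot g^{[2n]}$, now using $g^n = g^{2n} = 1_G$, delivers $g^{[n]} \sim g^{[2n]}$.

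Part 2 uses the identical support-and-coset comparison applied to $T \bdot g^{[k]}$ versus $T \bdot g^k$. The two cosets coincide because both abelianized products equal $\overline{T} \cdot \overline{g^k}$ in $G/G'$. The only point requiring attention is matching abelianness of the two supports: here I would use that, for $t \in G$ with $tg = z^{a} gt$ and $a \in \{0,1\}$, iteration gives $tg^k = z^{ak} g^k t$, which for odd $k$ reduces to $[t, g^k] = [t, g]$. Hence $t$ commutes with $g$ iff it commutes with $g^k$, so the supports $\supp(T) \cup \{g\}$ and $\supp(T) \cup \{g^k\}$ have the same abelianness status; the hypothesis $g^k \notin \mathsf Z(G)$ simply prevents the degenerate case in which $g$ would itself be central, already handled by Lemma \ref{3.6}.4.

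For part 3, my strategy is to separate the classes $[g \bdot h]$ and $[gh]$ by exhibiting $T \in \mathcal F(G)$ with $T \bdot gh \notin \mathcal B(G)$ and $T \bdot g \bdot h \in \mathcal B(G)$. I propose $T = g_0 \bdot g_0^{-1} \bdot z(gh)^{-1}$, where $g_0 \in G$ satisfies $[g_0, g] = z$ and $[g_0, gh] = 1_G$. In case (b) the second relation follows from the two assumed noncommutations via $g_0(gh) = z g g_0 h = z^2 (gh) g_0 = (gh) g_0$; in case (a) it is automatic since $gh \in \mathsf Z(G)$. Thus $\supp(T)$ is pairwise commuting, so $\pi(T) = \{z(gh)^{-1}\}$, and appending the central factor $gh$ preserves abelianness, giving $\pi(T \bdot gh) = \{z\}$ and hence $T \bdot gh \notin \mathcal B(G)$. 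Appending $g$ and $h$ separately, however, introduces the noncommuting pair $\{g_0, g\}$, so by the dichotomy $\pi(T \bdot g \bdot h)$ is the full coset of its abelianized image $\overline{z} = \overline{1_G}$, namely $G' = \{1_G, z\}$; in particular $1_G \in \pi(T \bdot g \bdot h)$. I expect the main obstacle to be precisely the twist by $z$ in the third factor of $T$: without it the common coset would contain $1_G$ as its singleton representative, and the construction would fail to separate the two classes.
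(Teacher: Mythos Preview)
Your argument is correct and takes a cleaner, more unified route than the paper's proof. The key difference is your explicit isolation of the dichotomy: when $|G'|=2$, the set $\pi(S)$ is a singleton if $\langle\supp(S)\rangle$ is abelian and the full two-element $G'$-coset otherwise. The paper never formulates this as a standalone tool; instead it argues each part by ad hoc case analysis on whether $\supp(T)$ contains an element not commuting with $g$, and then manipulates explicit orderings of products. Your observation that, for odd $k$, one has $[t,g^k]=[t,g]$ (so the commutation pattern of $g^k$ agrees with that of $g$) makes Part~2 immediate, whereas the paper tracks products $g_0^{(i)}=g^ih_1g^{k-i}h_2\cdots h_\ell$ step by step.

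For Part~3 the contrast is sharpest. The paper treats (a) and (b) separately: case~(a) via the characterisation $[1_{\mathcal F(G)}]=\mathcal B(\mathsf Z(G))$ from Lemma~\ref{3.7}.2, and case~(b) via the witness $T=(g^{-1}g_0)\bdot(h^{-1}g_0^{-1})$. Your single witness $T=g_0\bdot g_0^{-1}\bdot z(gh)^{-1}$ handles both cases simultaneously once you note that $g_0$ commutes with $gh$ in either situation. One phrasing slip: you call $gh$ a ``central factor'' when appending it to $T$, but in case~(b) it need not be central in $G$; what you actually use (and have already established) is that $gh$ commutes with every element of $\supp(T)$, which is enough to keep $\langle\supp(T\bdot gh)\rangle$ abelian. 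With that wording corrected, your proof goes through and is arguably more transparent than the original.
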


\begin{proof}
1. Let  $T \in \mathcal F (G)$. Since $g^{n} = 1_{G}$,  $1_{G} \in \pi(g\bdot T)$ implies that $1_{G} \in \pi\big(g^{[n+1]}\bdot T\big)$.
Conversely, suppose that $1_{G} \in \pi\big(g^{[n+1]}\bdot T\big)$.
If every element in $\supp(T)$ commutes with $g$, then
\[
\pi\big(g^{[n+1]}\bdot T\big) = \pi(g\bdot T) \quad \text{and hence} \quad  1_{G} \in \pi(g \bdot T) \,.
\]
Now suppose  that there is at least one $h \in \supp(T)$ such that $hg \neq gh$. Then $\pi(g\bdot T)$ has at least two elements.
Since $|G'| = 2$ and
\[
  \pi(g\bdot T) \subset \pi\big(g^{[n+1]}\bdot T\big) \subset G',
\]
we obtain that $\pi(g\bdot T) = G'$ and hence $1_{G} \in \pi(g\bdot T)$. Thus $[g] = \big[g^{[n+1]}\big]$ and thus
\[
\big[g^{[n]}\big] + \big[g^{[n]}\big] = \big[g^{[2n]}\big] = \big[g^{[n+1]} \bdot g^{[n-1]}\big] = \big[g^{[n]}\big] \,.
\]

\smallskip
2. Let $k \in [1,n]$ be  odd, $g^k \notin \mathsf Z (G)$, and $T \in \mathcal F (G)$. If $1_G \in \pi \big(g^k \bdot T\big)$, then $1_G \in \pi \big(g^{[k]} \bdot T\big)$.   Conversely, suppose that $1_{G} \in \pi\big(T\bdot g^{[k]}\big)$. If $hg = gh$ for all $h \in \supp (T)$, then obviously $1_G \in \pi \big(g^k \bdot T\big)$.

Suppose there is an element $h \in \supp(T)$ such that $hg \neq gh$.
Then $\pi\big(T\bdot g^{[k]}\big) = G'$. We set $T = h_1 \bdot \ldots \bdot h_{\ell}$ with $h_1=h$ and consider the elements
\[
  g_{0} = h_{1}g^k h_{2}\ldots h_{\ell} \quad \und \quad g_{0}^{(1)} = g h_{1} g^{k-1} h_{2}\ldots h_{\ell} \,\, \mbox{ in } G'.
\]
Then $G' = \big\{g_0, g_0^{(1)}\big\}$. If $g_0=1$, then we are done.  Suppose  that $g_{0}^{(1)} = 1_{G}$.
Then
\[
  g_{0}^{(2)} = ggh_{1}g^{k-2}h_{2}\ldots h_{\ell} \neq g_{0}^{(1)}
\]
whence $g_{0}^{(2)} = g_{0}$, and  $hg^{2} = g^{2}h$. Thus we obtain
\[
  1_{G} = g_{0}^{(1)} = g_{0}^{(3)} = \ldots = g_{0}^{(k)} = g^{k}h_{1}\ldots h_{\ell} \in \pi\big(T\bdot g^{k}\big) \,.
\]

\smallskip
3. Let $g,h \in G\setminus\mathsf{Z}(G)$ with $gh = hg$.

(a) Suppose that $gh \in \mathsf Z (G)$ and assume to the contrary that $g\bdot h \sim gh$.
By Lemma \ref{3.7}.2, we infer that
\[
  (g\bdot h)^{[\ord(gh)]} \sim gh^{[\ord(gh)]} \sim 1_{\mathcal F (G)} \,.
\]
a contradiction to $[1_{\mathcal F (G)}] = \mathcal B \big(\mathsf{Z}(G)\big)$.

\smallskip
(b) Let $g_{0} \in G$ such that  $g_{0}g \neq gg_{0} \und g_{0}h \neq hg_{0}$. If
\[
  T = (g^{-1}g_{0})\bdot (h^{-1}g_{0}^{-1}) \in \mathcal F (G) \,,
\]
then $1_G \notin \pi \big(T\bdot (gh)\big)$ but $1_{G} = g(g^{-1}g_{0})h(h^{-1}g_{0}^{-1}) \in \pi(T\bdot g\bdot h)$.
This shows that $g\bdot h \nsim gh$.
\end{proof}

\smallskip
It is well-known that a finitely generated monoid is Krull if and only if it is root-closed. Thus if $G$ is non-abelian, then $\mathcal B (G)$ is not root-closed by Proposition \ref{3.4}. The next result shows that it is seminormal if $|G'|=2$.
An element $c$ of a semigroup is called a regular if $c$ lies in a subgroup of the semigroup, and the semigroup is called  \emph{Clifford} if every element is regular.

\smallskip
\begin{theorem} \label{3.10}~
Suppose that $|G'|=2$.
\begin{enumerate}
\item $\mathcal B (G)$ is seminormal.

\smallskip
\item $\mathcal C \big(\mathcal B (G), \mathcal F (G)\big)$ is a Clifford semigroup. In particular, if $S \in \mathcal F (G)$ with $\pi (S)=\{g\}$, then $[S]$ generates a cyclic subgroup of order $\ord (g)$.

\smallskip
\item $ |\mathcal C \big(\mathcal B (G), \mathcal F (G)\big)| \,\, \leq \,\, |\mathsf{Z}(G)| \,\, + \prod_{g \in G\setminus \mathsf{Z}(G)} \ord(g)$.
\end{enumerate}
\end{theorem}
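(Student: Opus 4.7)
For part~(1), I will verify the seminormality criterion: given $x\in\mathsf q(\mathcal B(G))$ with $x^{2},x^{3}\in\mathcal B(G)$, show $x\in\mathcal B(G)$. Since $\mathcal F(G)$ is free abelian, hence root closed in $\mathsf q(\mathcal F(G))$, one has $x=S\in\mathcal F(G)$. Writing $\pi(S)\subset gG'$, the inclusions $\pi(S^{[2]})\subset g^{2}G'$ and $\pi(S^{[3]})\subset g^{3}G'$ together with $S^{[2]},S^{[3]}\in\mathcal B(G)$ force $g^{2},g^{3}\in G'$, hence $g\in G'$ and $\pi(S)\subset G'$. As $|G'|=2$, either $\pi(S)=G'$ (so $1_{G}\in\pi(S)$ and $S\in\mathcal B(G)$) or $\pi(S)=\{c\}$ with $c\ne 1_{G}$. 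In the latter case Lemma~\ref{3.6}.5 makes $\langle\supp(S)\rangle$ abelian, so $\pi(S^{[3]})=\{c^{3}\}=\{c\}$ (using $c^{2}=1_{G}$), contradicting $S^{[3]}\in\mathcal B(G)$.

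For part~(2), I split on $|\pi(S)|\in\{1,2\}$. If $|\pi(S)|=2$, then $\pi(S)$ is a full $G'$-coset and Lemma~\ref{3.6}.2(b) identifies $[S]$ with an element of the subgroup $\mathcal C\cong G/G'$ from Theorem~\ref{3.8}.1, so $[S]$ is regular. If $\pi(S)=\{g\}$ with $n=\ord(g)$, the crux is to prove $S\sim S^{[n+1]}$: for any $T\in\mathcal F(G)$, both $\pi(T\bdot S)$ and $\pi(T\bdot S^{[n+1]})$ lie in a common $G'$-coset (each $\pi$ sits in one coset and $g^{n+1}G'=gG'$), and since both sequences have the same support, Lemma~\ref{3.6}.5 forces them to be simultaneously singletons (and then both equal to $\{tg\}$ for the product $tg$ computed in the generated abelian subgroup, since $g^{n+1}=g$ there) or both full cosets $tgG'$; in either case ``$1_{G}$ lies in the set'' is the same condition. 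Hence $[S]^{n+1}=[S]$, so $[S]^{n}$ is idempotent and $\{[S],[S]^{2},\ldots,[S]^{n}\}$ is a cyclic subgroup of order exactly $n$, because $\pi(S^{[k]})=\{g^{k}\}$ are pairwise distinct for $k\in[1,n]$ and Lemma~\ref{3.6}.1 forces distinct classes. Together with the full-coset case, every element of the class semigroup is regular, establishing the Clifford property.

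For part~(3), Theorem~\ref{3.8}.2 contributes exactly $|\mathsf Z(G)|$ invertible classes. For a non-invertible class $[S]$, every representative must have a term outside $\mathsf Z(G)$; iteratively absorbing each central entry into a fixed non-central term via Lemma~\ref{3.6}.4 yields a representative $S'\in\mathcal F(G\setminus\mathsf Z(G))$. Lemma~\ref{3.9}.1 then lets me reduce each multiplicity $\mathsf v_{g}(S')$ modulo $\ord(g)$, so counting the resulting canonical tuples $(\mathsf v_{g}(S'))_{g\notin\mathsf Z(G)}$ produces at most $\prod_{g\notin\mathsf Z(G)}\ord(g)$ non-invertible classes, yielding the claimed bound. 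The main obstacle is sharpening the count so that each coordinate ranges over exactly $\ord(g)$ values rather than $\ord(g)+1$: Lemma~\ref{3.9}.1 alone constrains $\mathsf v_{g}(S')$ to $\{0\}\cup[1,\ord(g)]$, so the wrap-around case $\mathsf v_{g}(S')=\ord(g)$ must be merged with a smaller-multiplicity form via a coordinated application of Lemmas~\ref{3.6}.4 and~\ref{3.9}.1, for instance by shifting the surplus factor $g^{[\ord(g)]}$ onto a different non-central coordinate or onto a central element that is subsequently reabsorbed.
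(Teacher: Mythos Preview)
Your arguments for parts (1) and (2) are correct and follow essentially the same lines as the paper's proof. The only cosmetic difference in (2) is that the paper argues the two cases (every element of $\supp(T)$ commutes with every element of $\supp(S)$, versus not) directly, whereas you phrase this via Lemma~\ref{3.6}.5 applied to the common support of $T\bdot S$ and $T\bdot S^{[n+1]}$; the content is identical. Your verification that the cyclic group has order exactly $n$ (via Lemma~\ref{3.6}.1 and the distinctness of the singletons $\pi(S^{[k]})=\{g^{k}\}$) is a clean alternative to the paper's appeal to Lemma~\ref{3.7}.1.

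For part (3) your route differs from the paper's and, as you yourself flag, is left incomplete. You split off the $|\mathsf Z(G)|$ invertible classes via Theorem~\ref{3.8}.2, then for a non-invertible class absorb every central term into a fixed non-central one to land in $\mathcal F(G\setminus\mathsf Z(G))$, and finally invoke Lemma~\ref{3.9}.1 to reduce multiplicities. But Lemma~\ref{3.9}.1 only gives $g^{[m]}\sim g^{[m']}$ for $m,m'\ge 1$ congruent modulo $\ord(g)$; it says nothing about $g^{[0]}$, so each coordinate really ranges over $\{0\}\cup[1,\ord(g)]$, i.e.\ $\ord(g)+1$ values. Your proposed fix (``shifting the surplus $g^{[\ord(g)]}$ onto another non-central coordinate or a central element to be reabsorbed'') is not carried out and does not obviously terminate in a canonical form with at most $\prod_{g\notin\mathsf Z(G)}\ord(g)$ possibilities; this is a genuine gap.

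The paper reverses the order of operations: it \emph{first} reduces every multiplicity (central and non-central alike) to the range $[1,\ord(g_i)]$ using Lemma~\ref{3.9}.1, and only \emph{then} collapses the central block $g_1^{[r_1]}\bdot\ldots\bdot g_k^{[r_k]}$ to a single $g_0\in\mathsf Z(G)$ via Lemma~\ref{3.6}.4, obtaining representatives $g_0\bdot\prod_{g\notin\mathsf Z(G)}g^{[r_g]}$ with each $r_g\in[1,\ord(g)]$. This sidesteps your zero-multiplicity issue by never allowing an exponent to drop to~$0$, though it tacitly presupposes every $g$ occurs in $S$, and the displayed count is literally $|\mathsf Z(G)|\cdot\prod$ rather than $|\mathsf Z(G)|+\prod$. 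Your invertible/non-invertible split is in fact the more natural path toward the additive bound stated in the theorem, but to make it work you must actually complete the reduction of the non-invertible classes to at most $\prod_{g\notin\mathsf Z(G)}\ord(g)$ canonical representatives.
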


\begin{proof}
1. Let $S \in \mathsf q \big(\mathcal B (G)\big)$ such that $S^{[2]},S^{[3]} \in \mathcal B(G)$. Since $S \in \mathsf q \big(\mathcal F (G)\big)$ with $S^{[2]}, S^{[3]} \in \mathcal F (G)$, we have that $S \in \mathcal F (G)$.
Let $g \in \pi(S)$. Since $S^{[2]},S^{[3]} \in \mathcal B (G)$, it follows  that
\[
  g^{2} \in \pi\big(S^{[2]}\big) \subset G' \quad \mbox{ and } \quad g^{3} \in \pi\big(S^{[3]}\big) \subset G' \,.
\]
Since $G'$ is a subgroup of $G$, we obtain $g \in G'$. If $g = 1_G$, then we are done.  Suppose that $g \ne 1_G$. Then $G' = \{1_G, g\}$.
If each two elements in $\supp(S)$ would commute, then
\[
  \pi(S) = \{ g \}, \quad \pi\big(S^{[2]}\big) = \{ g^{2} \}, \und \quad \pi\big(S^{[3]}\big) = \{ g^{3} \} \,,
\]
which implies $g^{2} = 1_G = g^{3} \und g = 1_G$, a contradiction.
Thus $\supp (S)$ contains two elements which do not commute, say $S = g_1 \bdot \ldots \bdot g_{\ell}, \,\, g_1g_2 \ne g_2g_1, \und g = g_1 \ldots g_{\ell}$.
Then $1_G = g_2g_1g_3 \ldots g_{\ell} \in \pi (S)$.

\smallskip
2. Let $S \in \mathcal F (G)$.
If $\pi(S)$ has two elements, then $[S]$ lies in the group given in Theorem \ref{3.8}.1.
Suppose that $\pi(S)$ has only one element, say $\pi(S) = \{g\}$ and  $\ord(g)=n$. We assert that $S \sim S^{[n+1]}$. Suppose  this holds true. Then  $\Big\{ [S], \ldots, \big[S^{[n]}\big] \Big\}$ is a cyclic subgroup of the class semigroup, and hence the assertion follows. Let $m \in [1,n]$ and $\big[S^{[m]}\big]$ the identity element of the subgroup. Then it is an idempotent of the class semigroup, and Lemma \ref{3.7}.1 shows that $\pi \big(S^{[m]}\big) = \{g^m\} \subset G'$ is a subgroup. This implies $m=n$.

Thus it remains to show that $S \sim S^{[n+1]}$. To do so,
let $T \in \mathcal F (G)$ be given. Since $S^{[n]} \in \mathcal B (G)$,
\[
  1_{G} \in \pi(T\bdot S) \quad \mbox{ implies that } \quad 1_{G} \in \pi\big(T\bdot S^{[n+1]}\big) \,.
\]
Conversely, suppose  that $1_{G} \in \pi\big(T\bdot S^{[n+1]}\big)$.
If every element of $\supp(T)$ commutes with every element of $\supp(S)$, then $\pi\big(T\bdot S^{[n+1]}\big) = \pi(T\bdot S)$ and thus $1_{G} \in \pi(T\bdot S)$.
If there are $t \in \supp(T) \und s \in \supp(S)$ such that $ts \neq s t $, then $|\pi(T \bdot S)| \geq 2$ and hence $1_{G} \in \pi(T \bdot S)$.

\smallskip
3. We set $G = \{g_{1}, \ldots, g_{n}\}$ and $\mathsf Z (G) = \{g_1, \ldots, g_k\}$ for some $k \in [1,n]$. Let $S \in \mathcal F (G)$. Then we can write $S$ of the form
\[
  S = g_{1}^{[k_{1}\ord(g_{1}) + r_{1}]}\bdot \ldots \bdot g_{n}^{[k_{n}\ord(g_{n}) + r_{n}]} \,,
\]
where $k_{1},\ldots,k_{n} \in \N_{0} \und r_{i} \in [1,\ord(g_{i})]$ for each $i \in [1,n]$.
By Lemma \ref{3.9}.1, we have
\[
  S \sim g_{1}^{[r_{1}]}\bdot \ldots \bdot g_{n}^{[r_{n}]} \,.
\]
By Lemma \ref{3.6}.4., $g_{1}^{[r_{1}]}\bdot \ldots \bdot g_{k}^{[r_{k}]} \sim g_0$, where $g_0 =  g_{1}^{r_{1}} \ldots  g_{k}^{r_{k}} \in \mathsf Z (G)$. Thus
\[
  S \sim g_0 \bdot g_{k+1}^{[r_{k+1}]}\bdot \ldots \bdot g_{n}^{[r_{n}]} \,,
\]
and hence the assertion follows.
\end{proof}

\medskip
\begin{remark} \label{3.11}~

1. Suppose that  $|G'|=2$.
Let $g, h \in G$ be distinct with  $\ord(g)=n$ and  $\ord(h)=m$. By Theorem \ref{3.10}.2, $[g] \subset \mathcal C \big( \mathcal B (G), \mathcal F (G)\big)$ generates a cyclic subgroup of order $n$ and $[h]$ generates a cyclic subgroup of order $m$.

Suppose that $\langle [g] \rangle \cap \langle [h] \rangle \neq \emptyset$. Then there are $i \in [1,n] \und j \in [1,m]$ such that $g^{[i]} \sim h^{[j]}$.
Let $m_{j} = \ord(h^{j}) = \frac{m}{\gcd(j,m)}$\,. Since $\big[h^{[m]}\big]$ is an idempotent, we have
\[
  g^{[im_{j}]} \sim h^{[m]} \,.
\]
By Lemma \ref{3.6}.1, $im_{j} = kn$ for some $k \in \N$. Since $\big[g^{[n]}\big]$ is also idempotent, we have
\[
  g^{[n]} \sim h^{[m]} \,.
\]
Again by Lemma \ref{3.6}.1, we obtain $gh = hg$ because $\mathcal B (G)$-equivalence is a congruence relation on $\mathcal F (G)$.

It follows that if $gh \neq hg$, then $\langle [g] \rangle \cap \langle [h] \rangle = \emptyset$, and for each $i \in [1,n] \und j \in [1,m]$,
\[
  \big[g^{[i]}\big] + \big[h^{[j]}\big] = \big[g^{[i]}\bdot h^{[j]}\big] \in \mathcal C \,,
\]
where $\mathcal C$ is the group given in Theorem \ref{3.8}.1.

\smallskip
2. Suppose that $\mathcal B (G)$ is seminormal and let $G_0 \subset G$ be a subset. Let $S \in \mathsf q \big(\mathcal B (G_0)\big)$ such that $S^{[2]},S^{[3]} \in \mathcal B(G_0)$. Since $\mathcal B (G)$ is seminormal, it follows that $S \in \mathcal B (G)$ and hence $S \in \mathcal B (G) \cap \mathsf q \big(\mathcal B (G_0)\big) = \mathcal B (G_0)$.
Thus $\mathcal B (G_0)$ is  seminormal.

\smallskip
3. Let $D_{2n} = \langle a,b \rangle = \{ 1_G, a, \ldots, a^{n-1}, b, ab, \ldots, a^{n-1}b \}$ be the dihedral group with $n \equiv 3 \mod 4$,
where $\ord(a) = n, \ord(b) = 2, \und a^{k}ba^{\ell}b = a^{k-\ell}$ for all $k,\ell \in \Z$. Then
\[
  S = a^{[\frac{n-1}{2}]}\bdot b^{[2]} \in \mathsf q \big(\mathcal B (D_{2n})\big) \setminus \mathcal B (D_{2n}), \,\, \mbox{ but } \, S^{[2]}, S^{[3]} \in \mathcal B (D_{2n}).
\]
Thus if $G$ contains $D_{2n}$ as a subgroup, then 2. shows that $\mathcal B (G)$ is not seminormal.
\end{remark}


\bigskip
\section{Examples of Class Semigroups for Non-abelian Groups of small order} \label{4}
\bigskip

In this section, we discuss the three smallest non-abelian groups and provide a complete description of the class semigroup. Among others we will see that the monoid of product-one sequences over the dihedral group with $6$ elements is not seminormal and the associated class semigroup is not Clifford.

\smallskip
\begin{example} \label{4.1}~
Let $G = Q_{8} = \{E, I, J, K, -E, -I, -J, -K \}$ be the quaternion group with the relations
\[
IJ = -JI = K, \quad JK = -KJ = I, \quad KI = - I K = J, \quad \und \quad IJK = -E \,.
\]
Then $\mathsf{Z}(G) = G' = \{E, -E\} \und G/G' \simeq C_{2} \oplus C_{2}$. Furthermore, $\mathsf d (G)=4$ and $\mathsf D (G)=6$ by \cite[Theorem 1.1]{Ge-Gr13a}.

Let $S \in \mathcal F (G)$. If $|\pi(S)| = 2$, then, by Theorem \ref{3.8}.1, $S$ is $\mathcal B (G)$-equivalent to an element in the group $\mathcal C$ which is isomorphic to $G/G'$.
We only consider the case $|\pi(S)| = 1$. By Lemma \ref{3.6}.5, $\langle \supp(S) \rangle$ is an abelian subgroup of $G$.
Suppose that $S \in \mathcal F \big(\langle I \rangle\big)$. Then $S$ has of the form
\[
  S = E^{[n_1]}\bdot I^{[n_2]}\bdot (-E)^{[n_3]}\bdot (-I)^{[n_4]} \,,
\]
where $n_1, \ldots, n_4 \in \N_0$. By Lemma \ref{3.6}.4 and Lemma \ref{3.9} (items 1 and 2), we have
\[
  S \sim I^{[n]} \,\, \mbox{ for some } \, n \in [1,4] \,.
\]
By symmetry, we obtain the same results in the case $S \in \mathcal F \big(\langle J \rangle\big) \und S \in \mathcal F \big(\langle K \rangle\big)$.
Moreover, If $S \in \mathcal F \big(\langle -E \rangle\big)$, then, by Theorem \ref{3.8}.2, $S$ is $\mathcal B (G)$-equivalent to an element in the group of units of the class semigroup $\mathcal C \big(\mathcal B (G), \mathcal F (G)\big)$ which is isomorphic to $\mathsf Z (G)$.
It follows that
\[
  |\mathcal C \big(\mathcal B (G), \mathcal F (G)\big)| = 18 \,.
\]

Figure 1 presents the subgroup lattice of the class semigroup.
Note that the subgroup lattice of $G$ is not preserved in the class semigroup $\mathcal C \big(\mathcal B (G), \mathcal F (G)\big)$. Furthermore, observe that the bottom elements in the lattice are all idempotent elements of the class semigroup.

\smallskip
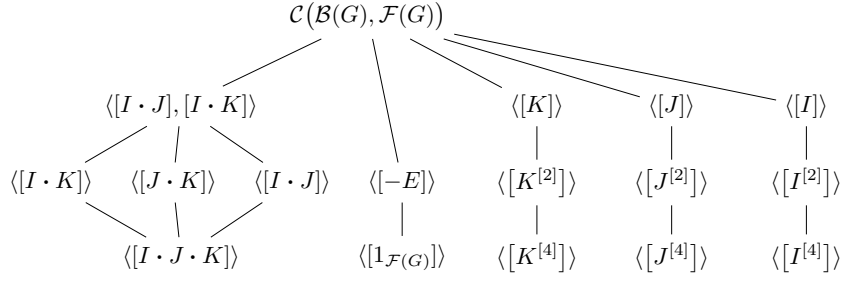
\begin{figure}[h]
\resizebox{.7\textwidth}{!}{%
\begin{tikzpicture}[node distance=2cm]
\title{Untergruppenverband der}
\node(C)                                  {$\mathcal C \big(\mathcal B (G), \mathcal F (G)\big)$};
\node(K)        [below right=1cm of C]    {$\langle [K] \rangle$};
\node(J)        [right of=K]              {$\langle [J] \rangle$};
\node(I)        [right of=J]              {$\langle [I] \rangle$};
\node(G/G')     [left=3.5cm of K]         {$\langle [I\bdot J], [I\bdot K] \rangle$};
\node(K2)       [below=.5cm of K]         {$\langle \big[K^{[2]}\big] \rangle$};
\node(J2)       [right of=K2]             {$\langle \big[J^{[2]}\big] \rangle$};
\node(I2)       [right of=J2]             {$\langle \big[I^{[2]}\big] \rangle$};
\node(-E)       [left of=K2]              {$\langle [-E] \rangle$};
\node(1)        [below=.5cm of -E]        {$\langle [1_{\mathcal F (G)}] \rangle$};
\node(K4)       [right of=1]              {$\langle \big[K^{[4]}\big] \rangle$};
\node(J4)       [right of=K4]             {$\langle \big[J^{[4]}\big] \rangle$};
\node(I4)       [right of=J4]             {$\langle \big[I^{[4]}\big] \rangle$};
\node(C1)       [left=.3cm of -E]         {$\langle [I\bdot J] \rangle$};
\node(C2)       [left=.3cm of C1]         {$\langle [J\bdot K] \rangle$};
\node(C3)       [left=.3cm of C2]         {$\langle [I\bdot K] \rangle$};
\node(C0)       [left=1.5cm of 1]         {$\langle [I\bdot J\bdot K] \rangle$};

\draw(C)       -- (I);
\draw(C)       -- (J);
\draw(C)       -- (K);
\draw(C)       -- (G/G');
\draw(C)       -- (-E);
\draw(I)       -- (I2);
\draw(J)       -- (J2);
\draw(K)       -- (K2);
\draw(-E)      -- (1);
\draw(K4)      -- (K2);
\draw(J4)      -- (J2);
\draw(I4)      -- (I2);
\draw(C1)      -- (G/G');
\draw(C2)      -- (G/G');
\draw(C3)      -- (G/G');
\draw(C0)      -- (C1);
\draw(C0)      -- (C2);
\draw(C0)      -- (C3);
\end{tikzpicture}
}
\caption{Subgroup Lattice of the Class Semigroup over $Q_{8}$}
\end{figure}
\end{example}

\smallskip
\begin{example} \label{4.2}~
Let $G = D_{8} = \langle a, b \t a^{4} = b^{2} = 1_{G} \und ba=a^{3}b \rangle = \{1_{G}, a, a^{2}, a^{3}, b, ab, a^{2}b, a^{3}b \}$ be the dihedral group of order $8$. Then $\mathsf{Z}(G) = G' = \langle a^{2} \rangle = \{1_{G}, a^{2}\} \und G/G' \simeq C_{2} \oplus C_{2}$. Furthermore, $\mathsf d (G)=4$ and $\mathsf D (G)=6$ by \cite[Theorem 1.1]{Ge-Gr13a}.

All arguments run along the same lines as the ones given in the previous example. However, in this case, there are two elements $g, h \in G\setminus \mathsf Z (G)$ such that $gh = hg$.
Consider the sequence $b\bdot a^{2}b \in \mathcal F (G)$ having $\pi(b\bdot a^{2}b) = \{ a^{2} \}$, and suppose that $b\bdot a^{2}b \sim S$ for some $S \in \mathcal F (G)$.
By Lemma \ref{3.6} (items 1 and 5), we have $\pi(S) = \{ a^{2} \}$ and hence $\langle \supp(S) \rangle \subset G$ is abelian subgroup containing $a^{2}$.
It follows that $\langle \supp(S) \rangle$ is one of $\langle a \rangle$, $\langle a^{2} \rangle$, $\langle a^{2}, b \rangle$ and $\langle a^{2}, ab \rangle$.

\smallskip
\noindent
CASE 1. $S \in \mathcal F \big(\langle a^{2} \rangle\big)$.

Then $S = a^{2}$ is only possible choice, but it never happen by Lemma \ref{3.9}.3.

\smallskip
\noindent
CASE 2. $S \in \mathcal F \big(\langle a \rangle\big)$.

Then $S = a^{[2]}$ is only possible choice by Lemma \ref{3.6}.4 and Lemma \ref{3.9} (items 1 and 2). Since $\mathcal B (G)$-equivalence is a congruence relation on $\mathcal F (G)$,
\[
  b\bdot a^{2}b \sim a^{[2]} \,\, \mbox{ implies that } \,\, b\bdot a^{2}b\bdot a \sim a^{[3]} \,.
\]
But $\{ a^{3} \} = \pi(a^{[3]}) = \pi(b\bdot a^{2}b\bdot a) = \{ a, a^{3} \}$, a contradiction.

In the case that $S \in \mathcal F \big(\langle a^{2}, ab \rangle\big)$, we can obtain a contradiction by the same argument of CASE 2.
It follows that $S = b\bdot a^{2}b$, and by Lemma \ref{3.6}.4,
\[
  S^{[2]} \sim b^{[2]} \,\, \mbox{ is an idempotent element of the class semigroup } \mathcal C \big(\mathcal B (G), \mathcal F (G)\big)
\]
with the relation $b^{[2]} \sim (a^{2}b)^{[2]} \sim S^{[2]}$.

For the sequence $T = ab\bdot a^{3}b \in \mathcal F (G)$, we can obtain the same result by above argument.
Thus $\big[T^{[2]}\big]$ is an idempotent element of the class semigroup $\mathcal C \big(\mathcal B (G), \mathcal F (G)\big)$ with the relation $(ab)^{[2]} \sim (a^{3}b)^{[2]} \sim T^{[2]}$.
It follows that
\[
  |\mathcal C \big(\mathcal B (G), \mathcal F (G)\big)| = 18 \,.
\]

Figure 2 presents the subgroup lattice of the class semigroup. Note that, as in the previous example, the subgroup lattice of $G$ is not preserved in the class semigroup $\mathcal C (\big(\mathcal B (G), \mathcal F (G)\big)$. Furthermore, observe that the bottom elements in the lattice are all idempotent elements of the class semigroup.

\smallskip
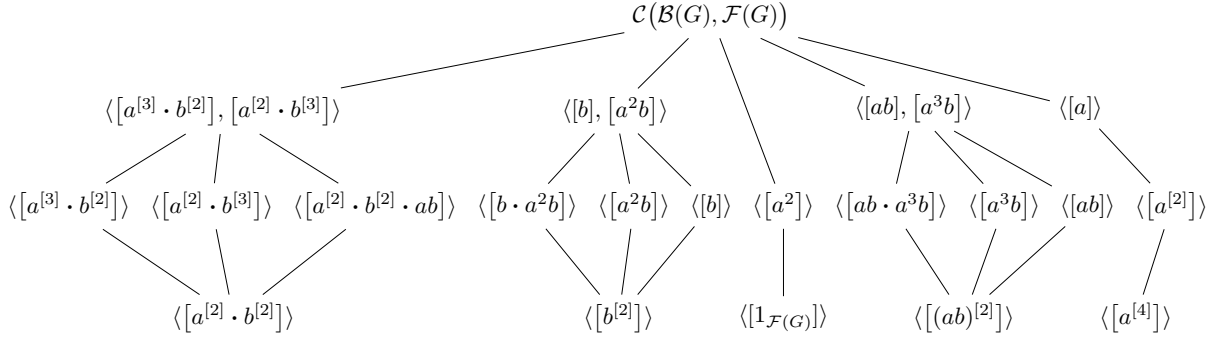
\begin{figure}[h]
\resizebox{1.00\textwidth}{!}{%
\begin{tikzpicture}[node distance=2cm]
\title{Untergruppenverband der}
\node(C)                                      {$\mathcal C \big(\mathcal B (G), \mathcal F (G)\big)$};
\node(ba2b)     [below left of= C]            {$\langle [b], \big[a^{2}b\big] \rangle$};
\node(aba3b)    [below right=1cm of C]        {$\langle [ab], \big[a^{3}b\big] \rangle$};
\node(G/G')     [left=3cm of ba2b]            {$\langle \big[a^{[3]}\bdot b^{[2]}\big], \big[a^{[2]}\bdot b^{[3]}\big] \rangle$};
\node(a)        [right=1cm of aba3b]          {$\langle [a] \rangle$};
\node(b)        [below right of=ba2b]         {$\langle [b] \rangle$};
\node(a2b)      [left=.05cm of b]             {$\langle \big[a^{2}b\big] \rangle$};
\node(bba2b)    [left=.05cm of a2b]           {$\langle \big[b\bdot a^{2}b\big] \rangle$};
\node(C1)       [left=.05cm of bba2b]         {$\langle \big[a^{[2]}\bdot b^{[2]}\bdot ab\big] \rangle$};
\node(C2)       [left=.05cm of C1]            {$\langle \big[a^{[2]}\bdot b^{[3]}\big] \rangle$};
\node(C3)       [left=.05cm of C2]            {$\langle \big[a^{[3]}\bdot b^{[2]}\big] \rangle$};
\node(a2)       [right=.05cm of b]            {$\langle \big[a^{2}\big] \rangle$};
\node(abba3b)   [right=.05cm of a2]           {$\langle \big[ab\bdot a^{3}b\big] \rangle$};
\node(a3b)      [right=.05cm of abba3b]       {$\langle \big[a^{3}b\big] \rangle$};
\node(ab)       [right=.05cm of a3b]          {$\langle [ab] \rangle$};
\node(a22)      [right=.05cm of ab]           {$\langle \big[a^{[2]}\big] \rangle$};
\node(1)        [below=1cm of a2]             {$\langle [1_{\mathcal F (G)}] \rangle$};
\node(b2)       [left=1cm of 1]               {$\langle \big[b^{[2]}\big] \rangle$};
\node(a2b2)     [right=1cm of 1]              {$\langle \big[(ab)^{[2]}\big] \rangle$};
\node(C0)       [left=4cm of b2]              {$\langle \big[a^{[2]}\bdot b^{[2]}\big] \rangle$};
\node(a4)       [right=1cm of a2b2]           {$\langle \big[a^{[4]}\big] \rangle$};

\draw(C)       -- (a);
\draw(C)       -- (ba2b);
\draw(C)       -- (G/G');
\draw(C)       -- (a2);
\draw(C)       -- (aba3b);
\draw(a)       -- (a22);
\draw(ba2b)    -- (b);
\draw(ba2b)    -- (a2b);
\draw(ba2b)    -- (bba2b);
\draw(aba3b)   -- (ab);
\draw(aba3b)   -- (a3b);
\draw(aba3b)   -- (abba3b);
\draw(a2)      -- (1);
\draw(b2)      -- (b);
\draw(b2)      -- (a2b);
\draw(b2)      -- (bba2b);
\draw(a2b2)    -- (ab);
\draw(a2b2)    -- (a3b);
\draw(a2b2)    -- (abba3b);
\draw(a4)      -- (a22);
\draw(C1)      -- (G/G');
\draw(C2)      -- (G/G');
\draw(C3)      -- (G/G');
\draw(C0)      -- (C1);
\draw(C0)      -- (C2);
\draw(C0)      -- (C3);
\end{tikzpicture}
}
\caption{Subgroup Lattice of the Class Semigroup over $D_{8}$}
\end{figure}
\end{example}

\smallskip
\begin{remark} \label{4.3}~
In general, the group of units $\mathcal C \big(\mathcal B (G), \mathcal F (G)\big)^{\times}$ of the class semigroup is not a direct factor. For example, let $G$ be the group described in Example \ref{4.1}. Assume to the contrary that $\mathcal C \big(\mathcal B (G), \mathcal F (G)\big)$ has a decomposition of the form
\[
 \mathcal C \big(\mathcal B (G), \mathcal F (G)\big) = \mathcal C_0 \times \mathcal C \big(\mathcal B (G), \mathcal F (G)\big)^{\times} \,,
\]
where $\mathcal C_0$ is a subsemigroup having $9$ elements. Figure 1. shows that there are three elements $x \in \mathcal C \big(\mathcal B (G), \mathcal F (G)\big)$ such that $x \neq x^{3}$. It follows that there exist three elements satisfying such property in the decomposition. Since $|\mathcal C \big(\mathcal B (G), \mathcal F (G)\big)^{\times}| = 2$, $\mathcal C_0$ has at least two such elements, whence $|\mathcal C_0| > 9$. Indeed, if $[I]$ and $[J]$ are in $\mathcal C_0$, then $[I\bdot J] = [I] + [J] \in \mathcal C_0$ and thus we obtain that $\mathcal C_0$ has at least $10$ elements. The similar argument works for the group described in Example \ref{4.2}.
\end{remark}

\smallskip
\begin{example} \label{4.4}~
Let $G = D_{6} = \langle a,b \t a^{3} = b^{2} = 1_{G} \und ba=a^{2}b \rangle = \{ 1_{G}, a, a^{2}, b, ab, a^{2}b \}$ be the dihedral group of order $6$. Then $\mathsf Z (G) = \{1_{G}\}$, $G' = \langle a \rangle = \{1_{G}, a, a^{2}\}$, and $G/G' \simeq C_2$. Furthermore, $\mathsf d (G)=3$ and $\mathsf D (G)=6$ by \cite[Theorem 1.1]{Ge-Gr13a}.

Let $S \in \mathcal F (G)$. If $|\pi(S)| = 3$, then, by Theorem \ref{3.8}.1, we obtain
\[
  S \sim a^{[2]}\bdot b^{[2]} \quad \mbox{ or } \quad S \sim a\bdot a^{2}\bdot b \,,
\]
where $\pi(a^{[2]}\bdot b^{[2]}) = \{ 1_G, a, a^{2} \} \,\, \und \,\, \pi(a\bdot a^{2}\bdot b) = \{ b, ab, a^{2}b \}$.

If $|\pi(S)| = 2$, then $S$ is $\mathcal B (G)$-equivalent with one of the following sequences :
\begin{equation} \label{length 2}
 \begin{aligned}
  a\bdot b^{[n]}, \quad a\bdot (ab)^{[n]}, \quad a\bdot (a^{2}b)^{[n]}, \quad a^{2}\bdot b^{[n]}, \quad a^{2}\bdot (ab)^{[n]}, \quad a^{2}\bdot (a^{2}b)^{[n]} \,,\\
  b^{[n]}\bdot ab, \quad b\bdot (ab)^{[n]}, \quad b^{[n]}\bdot a^{2}b, \quad b\bdot (a^{2}b)^{[n]}, \quad (ab)^{[n]}\bdot a^{2}b, \quad ab\bdot (a^{2}b)^{[n]} \,,
  \end{aligned}
\end{equation}
where $n \in \N$.

We now start with the following claims.

\smallskip
\noindent
\[
  \begin{aligned}
   {\bf CLAIM. A:} \hspace{.5cm} & b^{[2]} \sim b^{[4]}, \quad (ab)^{[2]} \sim (ab)^{[4]}, \quad (a^{2}b)^{[2]} \sim (a^{2}b)^{[4]}, \quad a^{[2]} \sim a^{[5]}, \quad (a^{2})^{[2]}  \sim (a^{2})^{[5]} \,,\\
                                 & (a^{2})^{[2]} \sim a^{[4]}, \quad (a^{2})^{[3]} \sim a^{[3]}, \quad (a^{2})^{[4]} \sim a^{[2]}, \quad a\bdot a^{2} \sim a^{[3]} \,,\\
                                 & a\bdot b \sim a^{2}\bdot b \sim a\bdot b^{[3]} \sim b^{[2]}\bdot ab \sim b^{[2]}\bdot a^{2}b \,,\\
                                 & a\bdot ab \sim a^{2}\bdot ab \sim a\bdot (ab)^{[3]} \sim ab^{[2]}\bdot a^{2}b \sim ab^{[2]}\bdot b \,, \\
                                 & a\bdot a^{2}b \sim a^{2}\bdot a^{2}b \sim a\bdot (a^{2}b)^{[3]} \sim (a^{2}b)^{[2]}\bdot ab \sim (a^{2}b)^{[2]}\bdot b \,.
  \end{aligned}
\]
\noindent
\[
  \begin{aligned}
  {\bf CLAIM. B:} \hspace{.5cm} & b \nsim b^{[3]}, \quad ab \nsim (ab)^{[3]}, \quad a^{2}b \nsim (a^{2}b)^{[3]}, \quad a \nsim a^{[4]}, \quad a^{[2]} \nsim a^{2} \,, \hspace{2.8cm}\\
                                & b\bdot ab \nsim b\bdot a^{2}b, \quad ab\bdot a^{2}b, \quad a\bdot b^{[2]}, \quad a\bdot (ab)^{[2]}, \quad a\bdot (a^{2}b)^{[2]} \,,\\
                                & b\bdot a^{2}b \nsim ab\bdot a^{2}b, \quad a\bdot b^{[2]}, \quad a\bdot (ab)^{[2]}, \quad a\bdot (a^{2}b)^{[2]} \,,\\
                                & ab\bdot a^{2}b \nsim a\bdot b^{[2]}, \quad a\bdot (ab)^{[2]}, \quad a\bdot (a^{2}b)^{[2]} \,.
  \end{aligned}
\]

\smallskip
Suppose the Claims hold true. Then we obtain that
\[
  |\mathcal C \big(\mathcal B (G), \mathcal F (G)\big)| = 26 \quad \und \quad \mathcal C \big(\mathcal B (G), \mathcal F (G)\big) = G_1 \cup G_2 \cup G_3 \cup G_4 \,,
\]
where
\begin{itemize}
\item  $G_1 = \Big\{ [b], \big[b^{[2]}\big], \big[b^{[3]}\big], [ab], \big[(ab)^{[2]}\big], \big[(ab)^{[3]}\big], [a^{2}b], \big[(a^{2}b)^{[2]}\big], \big[(a^{2}b)^{[3]}\big], [a], \big[a^{[2]}\big], \big[a^{[3]}\big], \big[a^{[4]}\big], \big[a^{2}\big] \Big\}$,

\item $G_2 = \Big\{ [a\bdot b], [a\bdot ab], \big[a\bdot a^{2}b\big], \big[a\bdot b^{[2]}\big], \big[a\bdot (ab)^{[2]}\big], \big[a\bdot (a^{2}b)^{[2]}\big], [b\bdot ab], \big[b\bdot a^{2}b\big], \big[ab\bdot a^{2}b\big] \Big\}$,

\item $G_3 = \Big\{ \big[a^{[2]}\bdot b^{[2]}\big], \big[a\bdot a^{2}\bdot b\big] \Big\}$ (that is isomprphic to $G/G'$), and

\item $G_4 = \Big\{ [1_{\mathcal F (G)}] \Big\}$ \big(that is isomorphic to $\mathsf Z (G)$\big).
\end{itemize}

Figure 3 presents the subgroup lattice of the class semigroup. Note that, as in the previous example, the subgroup lattice of $G$ is not preserved in the class semigroup $\mathcal C \big(\mathcal B (G), \mathcal F (G)\big)$. Furthermore, observe that the elements in the set
\[
  \Big\{ [b], [ab], \big[a^{2}b\big], [a], \big[a^{2}\big] \Big\} \cup G_2
\]
are not regular, whence $\mathcal C \big(\mathcal B (G), \mathcal F (G)\big)$ is not Clifford semigroup, and the bottom elements in the lattice are all idempotent elements of the class semigroup.

\smallskip
\begin{figure}[h]
\resizebox{.7\textwidth}{!}{%
\begin{tikzpicture}[node distance=2cm]
\title{Untergruppenverband der}
\node(C)                                               {$\mathcal C \big(\mathcal B (G), \mathcal F (G)\big)$};
\node(a2)       [below right=.05cm and 2cm of C]       {$\langle \big[a^{[2]}\big] \rangle$};
\node(a2b3)     [below right of=C]                     {$\langle \big[(a^{2}b)^{[3]}\big] \rangle$};
\node(ab3)      [below left of=C]                      {$\langle \big[(ab)^{[3]}\big] \rangle$};
\node(b3)       [left=1cm of ab3]                      {$\langle \big[b^{[3]}\big] \rangle$};
\node(G/G')     [left=1cm of b3]                       {$\langle \big[a\bdot a^{2}\bdot b\big] \rangle$};
\node(1)        [below=2.45cm of C]                    {$\langle [1_{\mathcal F (G)}] \rangle$};
\node(a2b2)     [right=.25cm of 1]                     {$\langle \big[(a^{2}b)^{[2]}\big] \rangle$};
\node(a3)       [right=.4cm of a2b2]                   {$\langle \big[a^{[3]}\big] \rangle$};
\node(ab2)      [left=.25cm of 1]                      {$\langle \big[(ab)^{[2]}\big] \rangle$};
\node(b2)       [left=.5cm of ab2]                     {$\langle \big[b^{[2]}\big] \rangle$};
\node(C0)       [left=1cm of b2]                       {$\langle \big[a^{[2]}\bdot b^{[2]}\big] \rangle$};

\draw(C)       -- (a2);
\draw(C)       -- (a2b3);
\draw(C)       -- (G/G');
\draw(C)       -- (ab3);
\draw(C)       -- (b3);
\draw(C)       -- (1);
\draw(a2b2)    -- (a2b3);
\draw(ab2)     -- (ab3);
\draw(b2)      -- (b3);
\draw(C0)      -- (G/G');
\draw(a3)      -- (a2);
\end{tikzpicture}
}
\caption{Subgroup Lattice of Class Semigroup over $D_{6}$}
\end{figure}
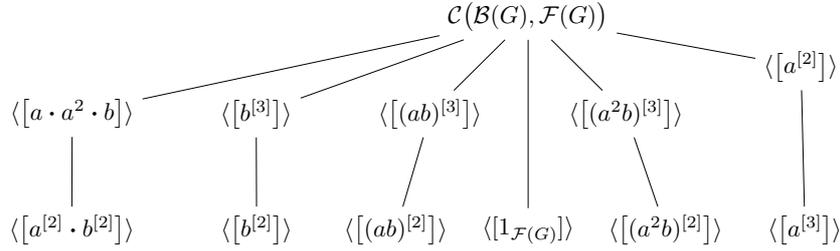

\smallskip
\begin{proof}[Proof of the Claim]
We only show the first assertions, and others can be proved by the same way. Let $T \in \mathcal F (G)$.
Since $b^{2} = 1_G$, it suffices to show that if $b^{[4]}\bdot T \in \mathcal B (G)$, then $b^{[2]}\bdot T \in \mathcal B (G)$.
Suppose that $b^{[4]}\bdot T \in \mathcal B (G)$.

\medskip
\noindent
CASE 1. $|\pi(T)| = 3$.

Then we have
\[
  T \sim a^{[2]}\bdot b^{[2]} \quad \mbox{ or } \quad T \sim a\bdot a^{2}\bdot b \,.
\]
Since $b^{[4]}\bdot T \in \mathcal B (G)$, it follows that we have the only case $T \sim a^{[2]}\bdot b^{[2]}$.
Then, since $b^{[2]}\bdot a\bdot b\bdot a\bdot b \in \mathcal B (G)$,
\[
  b^{[2]}\bdot T \sim b^{[2]}\bdot a^{[2]}\bdot b^{[2]} \,\, \mbox{ implies that } \,\, b^{[2]}\bdot T \in \big[b^{[2]}\bdot a^{[2]}\bdot b^{[2]}\big] \subset \mathcal B (G) \,.
\]

\medskip
\noindent
CASE 2. $|\pi(T)| = 2$.

Then $T$ is $\mathcal B (G)$-equivalent with one of the sequence described in (\ref{length 2}).
Since $b^{[4]}\bdot T \in \mathcal B (G)$, the possible choice of $T$ under $\mathcal B (G)$-equivalence is one the following sequences{\rm \,:}
\[
  \begin{aligned}
  a\bdot (ab)^{[\rm even]}, \quad a\bdot (a^{2}b)^{[\rm even]}, \quad a^{2}\bdot (ab)^{[\rm even]}, \quad a^{2}\bdot (a^{2}b)^{[\rm even]} \,,\\
  b\bdot (ab)^{[\rm odd_{\ge 3}]}, \quad b\bdot (a^{2}b)^{[\rm odd_{\ge 3}]}, \quad (ab)^{[\rm odd]}\bdot a^{2}b, \quad ab\bdot (a^{2}b)^{[\rm odd]} \,.
  \end{aligned}
\]
Then we obtain the following simple calculation, and it can cover all other cases{\rm \,:}
\begin{center}
$ab\bdot b\bdot ab\bdot b\bdot a, \quad a^{2}b\bdot b\bdot a^{2}b\bdot a\bdot b, \quad ab\bdot b\bdot ab\bdot a^{2}\bdot b, \quad a^{2}b\bdot b\bdot a^{2}b\bdot b\bdot a^{2}$,\\ $ab\bdot b\bdot ab\bdot b\bdot ab\bdot b, \quad a^{2}b\bdot b\bdot a^{2}b\bdot b\bdot a^{2}b\bdot b, \quad ab\bdot b\bdot a^{2}b\bdot b$,
\end{center}
which are all product-one sequences, and thus it follows that $b^{[2]}\bdot T \in \mathcal B (G)$.

\medskip
\noindent
CASE 3. $|\pi(T)| = 1$.

Then $\langle \supp(T) \rangle$ is abelian subgroup by Lemma \ref{3.6}.5. If $\langle \supp(T) \rangle = \langle 1_G \rangle$, then we are done because $T \in \mathcal B (G)$.

If $\langle \supp(T) \rangle = \langle b \rangle$, then $T = b^{[n]}$ for some $n \in \N$ by Lemma \ref{3.6}.4.
Since $b^{[4]}\bdot T \in \mathcal B (G)$, it follows that $n$ should be even number, and hence we are done.

By symmetry, we can obtain the same result whenever $\langle \supp(T) \rangle = \langle ab \rangle \mbox{ or } \langle a^{2}b \rangle$.

Suppose now that $\langle \supp(T) \rangle = \langle a \rangle$.
Then $T = a^{[n_1]}\bdot (a^{2})^{[n_2]}$ for some $n_1, n_2 \in \N_0$.

\begin{itemize}
\item[i)] $n_1 = 0$.

          In this case, $b^{[4]}\bdot (a^{2})^{[n_2]} \in \mathcal B (G)$ implies that $n_2 \ge 2$. It follows that $b^{[2]}\bdot (a^{2})^{[n_2]} \in \mathcal B (G)$.

\item[ii)] $n_2 = 0$.

          In this case, $b^{[4]}\bdot a^{[n_1]} \in \mathcal B (G)$ implies that $n_1 \ge 2$. It follows that $b^{[2]}\bdot a^{[n_1]} \in \mathcal B (G)$.

\item[iii)] $n_1 \ge 1 \und n_2 \ge 1$.

           To avoid the trivial case, we may assume that $n_1 \neq n_2$.

           If $|n_1 - n_2| = 1$, then, since any choice of $n_1, n_2$ can be reduced to the case
           \[
             n_1 = 1, n_2 = 2 \quad \mbox{ or } \quad n_1 = 2, n_2 = 1 \,,
           \]

           it suffices to verify only such two cases, and we have the followings{\rm \,:}
           \begin{center}
            $a\bdot b\bdot a^{2}\bdot a^{2}\bdot b^{[3]} \in \mathcal B (G) \quad \mbox{ implies that } \quad  a\bdot b\bdot a^{2}\bdot a^{2}\bdot b \in \mathcal B (G)$,\\
            $a\bdot a\bdot b\bdot a^{2}\bdot b^{[3]} \in \mathcal B (G) \quad \mbox{ implies that } \quad a\bdot a\bdot b\bdot a^{2}\bdot b \in \mathcal B (G)$.
           \end{center}

           If $|n_1 - n_2| \ge 2$, then it can be reduced to the case $n_1 = 0$ or $n_2 = 0$.
\end{itemize}

\noindent
Therefore, in any cases, we can obtain $b^{[2]}\bdot T \in \mathcal B (G)$, and it follows that $b^{[2]} \sim b^{[4]}$.
Moreover, $b \nsim b^{[3]}$ because $b^{[3]}\bdot (ab)^{[3]} \in \mathcal B (G), \mbox{ but } \,\, b\bdot (ab)^{[3]} \notin \mathcal B (G)$.
Hence
\[
  [b], \quad \big[b^{[2]}\big], \quad \big[b^{[3]}\big]
\]
are distinct $\mathcal B (G)$-equivalence classes in the class semigroup $\mathcal C \big(\mathcal B (G), \mathcal F (G)\big)$.
\end{proof}
\end{example}


\bigskip
\section{Arithmetic Properties of the monoid of product-one sequences} \label{5}
\bigskip

The goal of this section is to study the arithmetic of the monoid $\mathcal B (G)$ of product-one sequences. To do so we briefly recall some arithmetical concepts (details can be found in \cite{Ge-HK06a}).

Let $H$ be an atomic monoid and $a, b \in H$. If $a = u_1 \cdot \ldots \cdot u_k$, where $k \in \N$ and $u_1, \ldots, u_k \in \mathcal A (H)$, then $k$ is called the length of the factorization and $\mathsf L_H (a) = \mathsf L (a) = \{ k \in \N \mid a \ \text{has a factorization of length} \ k \} \subset \N$ is the {\it set of lengths} of $a$. As usual we set $\mathsf L (a) = \{0\}$ if $a \in H^{\times}$, and then
\[
\mathcal L (H) = \{ \mathsf L (a) \mid a \in H \}
\]
denotes the {\it system of sets of lengths} of $H$. Let $k \in \N$ and suppose that $H \ne H^{\times}$. Then
\[
\mathcal U_k (H) = \bigcup_{k \in L, L \in \mathcal L (H)} L \ \subset \ \N
\]
denotes the union of sets of lengths containing $k$, and we set
\[
\rho_k (H) = \sup \mathcal U_k (H) \,.
\]
If $L = \{m_1, \ldots, m_k \} \subset \Z$ is a finite subset of the integers, where $k \in \N$ and $m_1 < \ldots < m_k$, then $\Delta (L) = \{m_i - m_{i-1} \mid i \in [2,k]\} \subset \N$ is the set of distances of $L$. If all sets of lengths are finite, then
\[
\Delta (H) = \bigcup_{L \in \mathcal L (H)} \Delta (L)
\]
is the {\it  set of distances} of $H$. It is easy to check that $\min \Delta (H) = \gcd \Delta (H)$. If $\theta \colon H \to B$ is a transfer homomorphism between atomic monoids having finite sets of lengths, then   (\cite[Proposition 3.2.3]{Ge-HK06a})
\begin{equation}  \label{transfer}
\mathcal L (H) = \mathcal L (B) \ \text{ whence} \  \Delta (H)=\Delta (B) \ \text{and} \   \mathcal U_k (H) = \mathcal U_k (B) \ \ \text{for all $k \in \N$. }
\end{equation}
It is well-known that if $H$ is finitely generated, then then all unions $\mathcal U_k (H)$ of sets of lengths (in particular, all sets of lengths) and the set of distances $\Delta (H)$ are finite  (\cite[Theorem 3.1.4]{Ge-HK06a}).

\smallskip
We will study the system of sets of lengths and all further mentioned invariants for the monoid $\mathcal B (G)$. As usual, we set
\[
\mathcal L (G) = \mathcal L \big( \mathcal B (G) \big), \ \Delta (G) = \Delta \big( \mathcal B (G) \big), \ \mathcal U_k (G) = \mathcal U_k \big( \mathcal B (G) \big)
\]
and $\rho_k (G) = \rho_k \big( \mathcal B (G) \big)$ for all $k \in \N$. Since $\mathcal B (G)$ is finitely generated by Lemma \ref{3.2}, all these invariants are finite. The significance of the monoid $\mathcal B (G)$ for abelian groups stems from its relevance for general Krull monoids. Indeed, if $H$ is a commutative Krull monoid with class group $G$ such that every class contains a prime divisor, then there is a transfer homomorphism $\theta \colon H \to \mathcal B (G)$ and hence $\mathcal L (H)=\mathcal L (G)$. This transfer process from monoids of zero-sum sequences to more general monoids carries over to transfer Krull monoids (\cite{Ge16c}).

We study the structure of $\mathcal U_k (G)$ and of $\Delta (G)$ for general finite groups $G$ and we will derive canonical bounds for their size. Most results are known in the abelian case partly with different bounds (for a recent survey on the abelian case we refer to \cite{Sc16a}). However, if $G$ is non-abelian, then $\mathcal B (G)$ is not a transfer Krull monoid by Proposition \ref{3.4}. Thus the present results cannot be derived from the abelian setting. If $|G| \le 2$, then $\mathcal B (G)$ is factorial, $\mathcal L (G) = \{ \{k\} \mid k \in \N_0\}$, $\Delta (G) = \emptyset$, and $\mathcal U_k (G) = \{k\}$ for all $k \in \N$. To avoid annoying case distinctions we exclude this trivial case.

\bigskip
\centerline{\it Throughout this section, let $G$ be a  finite group with $|G| \ge 3$.}
\smallskip

\smallskip
Although the forthcoming proofs parallel the proofs given in the commutative setting there is a main difference. It stems from the fact that, in the non-abelian case, the embedding $\mathcal  B (G) \hookrightarrow \mathcal F (G)$ is not a divisor homomorphism. Thus there exist $U, V \in \mathcal B (G)$ such that $U$ divides $V$ in $\mathcal F (G)$, but not in $\mathcal B (G)$. Moreover, $U$ and $V$ can be atoms (e.g., if $G$ is the quaternion group, as discussed in Examples \ref{4.1}, then $U=I^{[4]} \in \mathcal A (G)$ and $V=I^{[4]} \bdot J^{[2]} \in \mathcal A (G)$ have this property).

\smallskip
\begin{lemma} \label{5.1}~
Let $k, \ell \in \N$ with $k < \ell$ and $U_1, \ldots, U_k, V_1, \ldots, V_{\ell} \in \mathcal A (G)$ such that $U_1 \bdot \ldots \bdot U_k = V_1 \bdot \ldots \bdot V_{\ell}$. There exist $\mu \in [1,k], \,\, \lambda, \lambda' \in [1,\ell] \mbox{ with } \lambda \neq \lambda', \mbox{ and } g_1,g_2 \in G$ such that $U_{\mu} = g_1 \bdot g_2 \bdot \ldots \bdot g_m$ with $m \ge 2$, $1_G = g_1 \ldots g_m$,  $g_1 \t V_{\lambda}$, and $g_2 \t V_{\lambda^{'}}$ in $\mathcal F (G)$.
\end{lemma}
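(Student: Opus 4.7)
The plan is to combine a pigeonhole-style argument, producing some $U_\mu$ whose terms do not all fit inside a single $V_\lambda$, with a cyclic rotation of a product-one ordering of $U_\mu$ that places two terms from different $V_\lambda$'s at positions $1$ and $2$.

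I first observe that the negation of the lemma is equivalent to the statement that for each $\mu \in [1,k]$, all terms of $U_\mu$ lie in one common $V_{\phi(\mu)}$. Indeed, if for some $\mu$ the terms of $U_\mu$ spanned at least two of the $V_\lambda$'s, then in any ordering $U_\mu = h_1 \bdot \ldots \bdot h_m$ with $h_1 \ldots h_m = 1_G$ the induced assignment $\psi \colon [1,m] \to [1,\ell]$ (sending each position to the index of its ambient $V_\lambda$, extracted from the identity $U_1 \bdot \ldots \bdot U_k = V_1 \bdot \ldots \bdot V_\ell$ in $\mathcal F(G)$) would be non-constant, so some $i_0$ (indices read cyclically mod $m$) satisfies $\psi(i_0) \neq \psi(i_0+1)$. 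Rotating to start at position $i_0$ preserves the product-one property, since $h_1 \ldots h_m = 1_G$ implies $h_{i_0} h_{i_0+1} \ldots h_{i_0-1} = 1_G$, and the first two terms of the rotated ordering then serve as the required $g_1, g_2$ (with $\lambda = \psi(i_0)$ and $\lambda' = \psi(i_0+1)$), so that in particular $m \ge 2$.

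Assuming, for contradiction, that this does not happen, every $U_\mu$ divides some $V_{\phi(\mu)}$ in $\mathcal F(G)$. A length count in the equation $U_1 \bdot \ldots \bdot U_k = V_1 \bdot \ldots \bdot V_\ell$ then yields the multiset identity
\[
\bulletprod{\mu \in \phi^{-1}(\lambda)} U_\mu \;=\; V_\lambda \quad \text{in } \mathcal F(G) \quad \text{for every } \lambda \in [1,\ell].
\]
Because each $U_\mu$ already lies in $\mathcal B(G)$, the left-hand side belongs to $\mathcal B(G)$ and the equation above is a genuine factorization of $V_\lambda$ in $\mathcal B(G)$. Since $V_\lambda \in \mathcal A(G)$ is an atom and each $U_\mu$ is a non-unit of the reduced monoid $\mathcal B(G)$, atomicity forces $|\phi^{-1}(\lambda)| = 1$ for every $\lambda$. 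Hence $\phi \colon [1,k] \to [1,\ell]$ is a bijection, contradicting $k < \ell$.

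The main obstacle is precisely the atomicity step above: as Proposition \ref{3.4} emphasizes, $\mathcal B(G) \hookrightarrow \mathcal F(G)$ is not a divisor homomorphism when $G$ is non-abelian, so multiset containment in $\mathcal F(G)$ does not automatically translate into divisibility in $\mathcal B(G)$. What makes the argument go through is that each $U_\mu$ itself lies in $\mathcal B(G)$, so that the pigeonhole identity is not merely an identity in $\mathcal F(G)$ but an honest factorization of $V_\lambda$ inside $\mathcal B(G)$, to which irreducibility of $V_\lambda$ applies directly; the cyclic rotation step is then a routine consequence of the symmetry of the product-one condition.
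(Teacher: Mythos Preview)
Your argument is correct and follows the same contrapositive strategy as the paper: assume the conclusion fails, show that each $U_\mu$ is entirely absorbed into a single $V_\lambda$, and derive a contradiction from $k<\ell$. The paper reaches the contradiction by iterative cancellation (peeling off $U_1 \mid V_1$ in $\mathcal F(G)$, then $U_2$, and so on until the left side is empty while $V_{\eta+1},\ldots,V_\ell$ remain), whereas you package the same idea more cleanly through a fixed multiset bijection and the atomicity of the $V_\lambda$; these are minor variations on the same proof.
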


\begin{proof}
Assume to the contrary that the assertion does not hold. Since $1_G$ is a prime element of $\mathcal B (G)$, we may suppose without restriction that all $U_i$ and $V_j$ have length at least two.  We set $U_{1} = g_1 \bdot g_2 \bdot \ldots \bdot g_m$ where $m \ge 2$ and $1_G = g_1 \ldots g_m$. Then $g_1 \bdot g_2$ divides $V_{\lambda}$ for some $\lambda \in [1, \ell]$, say $\lambda = 1$. Then we consider $g_2 \bdot g_3$. Since the assertion does not hold, it follows that $g_1 \bdot g_2 \bdot g_3$ divides $V_1$. Proceeding recursively we obtain that $U_1 \t V_1$, say $V_1 = U_1 \bdot V_1'$ with $V_1' \in \mathcal F (G)$. Thus we obtain the equation
\[
U_2 \bdot \ldots \bdot U_k = V_1' \bdot V_2  \bdot \ldots \bdot V_{\ell} \,.
\]
Proceeding in this way we end up with an equation of the form
\[
1_{\mathcal F (G)} = S_{1} \bdot \ldots \bdot S_{\eta}\bdot V_{\eta +1} \bdot \ldots \bdot V_{\ell} \,,
\]
where $\eta \le k$ and $S_1, \ldots, S_{\eta} \in \mathcal F (G)$, a contradiction to $k < \ell$.
\end{proof}

We consider the following property{\rm \,:}
\begin{itemize}
\item[{\bf P.}] If $U = g_1 \bdot \ldots \bdot g_{\ell} \in \mathcal A (G)$ and $g_1=h_1h_2$ with $h_1, h_2 \in G$, then $U' = h_1 \bdot h_2 \bdot g_2 \bdot \ldots \bdot g_{\ell}$ is either an atom or a product of two atoms at most.
\end{itemize}

\smallskip
\begin{remark} \label{5.2}~
Of course, every abelian group satisfies Property {\bf P} and the same is true for some non-abelian groups such as the quaternion group. However, for every $n \ge 9$, the dihedral group $D_{2n}$ does not have Property {\bf P} as the following example shows.
\end{remark}

\smallskip
\begin{example} \label{5.3}~
1. Let $G = Q_8$ be the quaternion group as discussed in Example \ref{4.1}. Then $\mathsf D (G) = 6$, and clearly any atom having length at most $4$ satisfies Property {\bf P}.
Any atom $U$ having the length $6$ has the form
\[
  U = g_{1}^{[4]}\bdot g_{2}^{[2]}\,, \quad \mbox{ where } \,\, g_1, g_2 \in \{ I, J, K ,-I, -J, -K \} \,\, \mbox{ with } \,\, g_2 \neq \pm g_1 ,
\]
and any atom $V$ having length $5$ is one of the following three types{\rm \,:}
\begin{itemize}
\item $V = g_1^{[3]}\bdot g_2\bdot g_3$, \quad where $g_1, g_2, g_3 \in \{ I, J, K ,-I, -J, -K \} \,\, \mbox{ with } \,\, g_2 \neq g_3 \und g_2, g_3 \neq \pm g_1$.

\smallskip
\item $V =  g_1^{[2]}\bdot g_2^{[2]}\bdot (-E)$, \quad where $g_1, g_2 \in \{ I, J, K ,-I, -J, -K \} \,\, \mbox{ with } \,\, g_2 \neq \pm g_1$.

\smallskip
\item $V = g_1\bdot (-g_1)\bdot g_2\bdot (-g_2)\bdot (-E)$, \quad where $g_1, g_2 \in \{ I, J, K \} \,\, \mbox{ with } \,\, g_1 \neq g_2$.
\end{itemize}
It is easy to check that $G$ satisfies Property {\bf P}.

\medskip
2. Let $G = D_{2n} = \langle a, b \t a^{n} = b^{2} = 1_G \und ba = a^{-1}b \rangle$ be the dihedral group, where $n \ge 9$. Consider the following sequences{\rm \,:}
\[
  U = b^{[2]}\bdot (ba)^{[3]}\bdot a^{2}\bdot ba^{5} \quad \und \quad U' = b^{[2]}\bdot (ba)^{[3]}\bdot ba^{2}\bdot ba^{4}\bdot ba^{5} \,.
\]
Then $U'$ is a product of $b^{[2]}$, $(ba)^{[2]}$, and $ba^{2}\bdot ba\bdot ba^{4}\bdot ba^{5}$, which are atoms. Now we need to verify that $U$ is an atom. Assume to the contrary that $U$ is not atom, say $U = W_1\bdot W_2$ with $W_1, W_2 \in \mathcal B (G)$, and let $W_1$ be an atom with $a^{2} \t W_1$.
Write $W_1 = g_1\bdot g_2\bdot \ldots \bdot g_{\ell}$ with $g_1 = a^{2} \und g_1g_2\ldots g_{\ell} = 1_G$. Then $\ell = 3 \, \mbox{ or } \ell = 5$.
If $\ell = 3$, then $g_2g_3 = a^{-2} \und g_2, g_3 \in \{ b, ba, ba^{5} \}$. Since $n \ge 9$, this is impossible.
If $\ell = 5$, then $W_2 = b^{[2]} \, \mbox{ or } \, W_2 = (ba)^{[2]}$ which implies that $W_1 = (ba)^{[3]}\bdot a^{2}\bdot ba^{5} \, \mbox{ or } \, W_1 = b^{[2]}\bdot ba\bdot a^{2}\bdot ba^{5}$. It is easy to check that $W_1$ is not an atom.
Thus $G$ does not have Property {\bf P}.
\end{example}

\smallskip
\begin{lemma} \label{5.4}~
Suppose that $G$ satisfies Property {\bf P}. Then for every $S \in \mathcal B (G)$ with $\max \Delta \big( \mathsf L (S)\big) \ge 2$, there exists $T \in \mathcal B (G)$ such that $|T| < |S|$ and $\max \Delta \big(\mathsf L (T)\big) \ge \max \Delta \big(\mathsf L (S)\big) - 1$.
\end{lemma}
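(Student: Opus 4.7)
\medskip
\noindent\textbf{Proof plan.} The idea is to merge two carefully chosen terms $g_1, g_2$ of $S$ into their product $h := g_1 g_2 \in G$, producing a sequence $T$ with $|T| = |S| - 1$, and then compare $\mathsf L(T)$ with $\mathsf L(S)$ via two natural factorizations of $T$.

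Fix $d = \max \Delta\!\left(\mathsf L(S)\right) \geq 2$ and a pair of consecutive lengths $k < \ell$ in $\mathsf L(S)$ with $\ell - k = d$, together with atomic factorizations $S = U_1 \bdot \ldots \bdot U_k = V_1 \bdot \ldots \bdot V_\ell$. Lemma~\ref{5.1} supplies indices $\mu$, $\lambda \neq \lambda'$ and terms $g_1, g_2 \in G$ with $U_\mu = g_1 \bdot g_2 \bdot g_3 \bdot \ldots \bdot g_m$ (where $m \geq 2$), $g_1 \ldots g_m = 1_G$, and $g_1 \mid V_\lambda$, $g_2 \mid V_{\lambda'}$ in $\mathcal F(G)$. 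Write $V_\lambda = g_1 \bdot X$, $V_{\lambda'} = g_2 \bdot Y$, set $h := g_1 g_2$, and let $T \in \mathcal F(G)$ be obtained from $S$ by removing one copy each of $g_1, g_2$ and inserting one copy of $h$; thus $|T| = |S|-1$. The sequences
\[
U_\mu^* \;:=\; h \bdot g_3 \bdot \ldots \bdot g_m \quad\text{and}\quad W \;:=\; h \bdot X \bdot Y
\]
both lie in $\mathcal B(G)$ via the obvious product-one orderings, and a direct computation shows
\[
T \;=\; U_\mu^* \bdot \prod_{i \neq \mu} U_i \;=\; W \bdot \prod_{j \neq \lambda, \lambda'} V_j .
\]

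The key step is to prove that $U_\mu^*$ is an atom. Suppose $U_\mu^* = C_1 \bdot \ldots \bdot C_t$ with each $C_i \in \mathcal A(G)$, and let $C_j$ be the atom whose support contains $h$. After reordering $C_j$ so $h$ is its leading term, Property~\textbf{P} applied to the split $h = g_1 g_2$ yields a sequence that is either an atom or a product of at most two atoms. Substituting it back into $C_1 \bdot \ldots \bdot C_t$ recovers $U_\mu$ and expresses the atom $U_\mu$ as a product of $t$ or $t+1$ atoms, forcing $t = 1$. Hence the first factorization above witnesses $k \in \mathsf L(T)$, while the second shows $\mathsf L(T)$ contains $(\ell - 2) + r \geq \ell - 1$ for any $r \in \mathsf L(W)$.

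The same tool also gives $\mathsf L(T) \cap (k, \ell - 1) = \emptyset$. Indeed, given any factorization $T = A_1 \bdot \ldots \bdot A_{\ell'}$ with $k < \ell' < \ell - 1$ and each $A_i \in \mathcal A(G)$, let $A_j$ be the atom containing $h$; Property~\textbf{P} applied to the split $h = g_1 g_2$ in $A_j$ produces an atom or a product of two atoms, and substituting it back yields a factorization of $S$ into $\ell'$ or $\ell'+1$ atoms. Both integers lie in the open interval $(k, \ell)$, contradicting the fact that $k, \ell$ are consecutive in $\mathsf L(S)$. Combining the previous paragraphs, $\mathsf L(T)$ contains $k$ and some integer $\geq \ell - 1$ with no length in between, so $\max \Delta\!\left(\mathsf L(T)\right) \geq (\ell - 1) - k = d - 1$, as required.

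The principal obstacle is that $\mathcal B(G) \hookrightarrow \mathcal F(G)$ is not a divisor homomorphism in the non-abelian case (Proposition~\ref{3.4}), so naive cancellation of subsequences inside atoms is unavailable. Property~\textbf{P} is exactly what restores control whenever a single term of an atom is ``split'' into a product of two elements of $G$; every delicate point in the argument -- the atomicity of $U_\mu^*$ and the absence of factorization lengths in $(k, \ell - 1)$ -- reduces to an application of it.
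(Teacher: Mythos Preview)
Your proof is correct and follows essentially the same approach as the paper's: use Lemma~\ref{5.1} to locate adjacent terms $g_1,g_2$ of some $U_\mu$ landing in distinct $V_\lambda,V_{\lambda'}$, merge them into $h=g_1g_2$ to form $T$, and then invoke Property~{\bf P} to show that any factorization of $T$ of length $t$ lifts to one of $S$ of length $t$ or $t+1$, forcing the gap $[k+1,\ell-2]$ in $\mathsf L(T)$. One small remark: you deploy Property~{\bf P} to verify that $U_\mu^*$ is an atom, but this step does not actually require it---if $U_\mu^*=A\bdot B$ with $A,B\in\mathcal B(G)$ nontrivial and $h\mid A$, then any product-one ordering of $A$ can have the single occurrence of $h$ replaced by the consecutive pair $g_1,g_2$, so $g_1\bdot g_2\bdot (A/h)\in\mathcal B(G)$ and $U_\mu$ would fail to be an atom; the paper accordingly dispatches this with ``Clearly''.
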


\begin{proof}
Let $S \in \mathcal B (G)$ and $d = \max \Delta \big(\mathsf L (S)\big) \ge 2$. Then there are $k, \ell \in \N$ and $U_{1},\ldots, U_{k},V_{1},\ldots, V_{\ell} \in \mathcal A (G)$ such that
\[
  S = U_{1}\bdot \ldots \bdot U_{k} = V_{1}\bdot \ldots \bdot V_{\ell}
\]
with $\ell-k = d$ and $U_{1}\bdot \ldots \bdot U_{k}$ has no factorization of length in $[k+1,\ell-1]$.
Since $1_{G} \in \mathcal B (G)$ is a prime element, we may assume that $1_{G}$ does not divide $S$, and thus all $U_{i}$ and $V_{j}$ have length at least $2$.

By Lemma \ref{5.1}, we may assume that there are $g_{1},g_{2} \in G$ such that
\[
  g_{1}\bdot g_{2} \t U_{1}, \quad g_{1} \t V_{1} \,\, \und \,\, g_{2} \t V_{2} \,\, \mbox{ in } \mathcal F (G).
\]
Let $U_{1} = g_{1}\bdot g_{2}\bdot U'_{1}$, $V_{1} = g_{1}\bdot V'_{1} \und V_{2} = g_{2}\bdot V'_{2}$ with $U'_{1}, V'_{1}, V'_{2} \in \mathcal F (G)$.
Then we set $g_{0} = g_{1}g_{2} \in G$, and consider
\[
  U''_{1} = g_{0}\bdot U'_{1} \und V''_{1} = g_{0}\bdot V'_{1}\bdot V'_{2}.
\]
Clearly, $U''_{1} \in \mathcal A (G)$. Since $V_1$ gives rise to a product-one equation with $g_1$ being the finial element and $V_2$ gives rise to a product-one equation with $g_2$ being the first element, it follows that $V''_{1} \in \mathcal B (G)$. We obtain that
\[
  U''_{1}\bdot U_{2} \bdot \ldots \bdot U_{k} = V''_{1}\bdot V_{3}\bdot \ldots \bdot V_{\ell}.
\]
We set $T = U''_{1}\bdot U_{2}\bdot \ldots \bdot U_{k}$ and observe that $|T| < |S|$. If $T = W_{1}\bdot \ldots \bdot W_{t}$ with $W_{1},\ldots,W_{t} \in \mathcal A (G)$ and $g_{0} \t W_{1}$ in $\mathcal F (G)$, then $W_{1} = g_{0}\bdot W'_{1}$ and $W''_{1} = g_{1}\bdot g_{2}\bdot W'_{1}$ is either atom or product of precisely two atoms. Thus $S = U_{1}\bdot \ldots \bdot U_{k}$ has a factorization of length $t$ or $t+1$. It follows that $T$ has no factorization of length in $[k+1, \ell-2]$, and thus we obtain
\[
  \max \Delta \big(\mathsf L (T)\big) \ge \ell - 1 - k = d - 1 = \max \Delta \big(\mathsf L (S)\big)-1 \,.
\]
\end{proof}

The next result shows that the set of distances and all unions of sets of lengths of $\mathcal B (G)$ are finite intervals. This is far from being true in general. Indeed, for every finite set $\Delta \subset \N$ with $\min \Delta = \gcd \Delta$ there is a finitely generated Krull monoid $H$ such that $\Delta (H ) = \Delta$ (\cite{Ge-Sc17a}).

\smallskip
\begin{theorem} \label{5.5}~

\begin{enumerate}
\item $\mathcal U_k (G)$ is a finite interval for all $k \in \N$.

\smallskip
\item If $G$ satisfies Property {\bf P}, then $\Delta (G)$ is a finite interval with $\min \Delta (G)=1$.
\end{enumerate}
\end{theorem}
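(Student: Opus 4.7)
My plan is to treat part 2 first, since its argument is cleaner; both parts revolve around iterating a ``gap-reduction'' procedure on sets of lengths, and the crux is controlling the iteration.

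For part 2, the goal is to show that if $d \in \Delta(G)$ with $d \ge 2$, then $d - 1 \in \Delta(G)$. Combined with the finiteness of $\Delta(G)$ (which follows from Lemma \ref{3.2} and the general theory of finitely generated monoids), descending induction then yields $\Delta(G) = [1, \max\Delta(G)]$. Fix such a $d$ and pick $S_0 \in \mathcal B(G)$ with $d \in \Delta(\mathsf L(S_0))$, so $\max\Delta(\mathsf L(S_0)) \ge d \ge 2$. By Lemma \ref{5.4} there is $S_1$ with $|S_1| < |S_0|$ and $\max\Delta(\mathsf L(S_1)) \ge d - 1$. Iteratively producing $S_0, S_1, S_2, \ldots$ with strictly decreasing lengths, the sequence must terminate at some $S_N$ where Lemma \ref{5.4} no longer applies, i.e.\ $\max\Delta(\mathsf L(S_N)) < 2$. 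Since $\max\Delta(\mathsf L(S_i)) \ge d - 1$ for every $i \ge 1$, either at some intermediate step $\max\Delta(\mathsf L(S_i))$ equals $d - 1$ (so $d - 1 \in \Delta(G)$), or the iteration halts with $d = 2$ and $\max\Delta(\mathsf L(S_N)) = 1 = d - 1$, again giving the required membership. Non-emptiness of $\Delta(G)$, which secures $\min\Delta(G) = 1$, follows from $|G| \ge 3$ by exhibiting a sequence with non-unique factorization.

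For part 1, Property $\mathbf P$ is unavailable so Lemma \ref{5.4} cannot be invoked; instead I propose using Lemma \ref{5.1} directly in an analogous iteration. Suppose $k \le a < b$ both lie in $\mathcal U_k(G)$ with $b - a \ge 2$; the aim is to find $c \in (a, b) \cap \mathcal U_k(G)$. Choose $S \in \mathcal B(G)$ with $k, b \in \mathsf L(S)$ and two factorizations $S = U_1 \bdot \cdots \bdot U_k = V_1 \bdot \cdots \bdot V_b$. Lemma \ref{5.1} furnishes $\mu, \lambda \neq \lambda', g_1, g_2$ with $g_1 \bdot g_2 \mid U_\mu$ and $g_1 \mid V_\lambda$, $g_2 \mid V_{\lambda'}$. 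Setting $g_0 = g_1 g_2$ and $T = (S \cdot g_0) / (g_1 \bdot g_2)$ in $\mathcal F(G)$, the argument inside the proof of Lemma \ref{5.4} shows $|T| < |S|$ and that $T$ admits a factorization of length $k$ (via the atom $U''_\mu = g_0 \bdot U'_\mu$) and one of length $b - 2 + t$, where $t \in \mathbb N$ is the number of atoms into which $V''_\lambda := g_0 \bdot V'_\lambda \bdot V'_{\lambda'}$ decomposes in $\mathcal B(G)$. If $t = 1$, then $b - 1 \in \mathsf L(T) \subseteq \mathcal U_k(G)$ and we succeed; if $t \ge 2$, we replace $S$ by $T$ and repeat. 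Since $|S|$ strictly decreases, the iteration halts after finitely many steps.

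The principal obstacle lies in part 1: without Property $\mathbf P$, there is no a priori bound on $t$, and the iteration could in principle always yield $t \ge 2$, in which case the gap between the two factorization lengths grows rather than closes. I would need to argue that the iteration must eventually halt in the useful state $t = 1$. One possible route is a double induction on $|S|$ and on the gap $b - k$, in which each iteration strictly decreases the former while keeping a suitable monovariant on the latter; another is a pigeonhole argument over the finite class semigroup $\mathcal C^*(\mathcal B(G), \mathcal F(G))$ given by Proposition \ref{3.5}, exploiting that the equivalence classes $[T]$ produced by successive iterations take values in a finite set and so must eventually recur in a way that forces $t=1$.
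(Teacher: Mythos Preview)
Your Part~2 argument is essentially the paper's, but one claim is misstated: Lemma~\ref{5.4} only gives $\max\Delta(\mathsf L(S_{i+1})) \ge \max\Delta(\mathsf L(S_i)) - 1$, so ``$\max\Delta(\mathsf L(S_i)) \ge d-1$ for every $i \ge 1$'' is false after the first step. What is true is that the values $m_i := \max\Delta(\mathsf L(S_i))$ drop by at most $1$ at each step and eventually fall below $2$; taking the last index $j$ with $m_j \ge d$ then gives $d-1 \le m_{j+1} < d$, hence $m_{j+1} = d-1 \in \Delta(G)$. The paper runs the same descent more cleanly by choosing $S_0$ of \emph{minimal length} realizing $\max\Delta(G)$, so that any shorter $S_1$ furnished by Lemma~\ref{5.4} automatically satisfies $\max\Delta(\mathsf L(S_1)) < \max\Delta(G)$, forcing equality with $\max\Delta(G)-1$; then one iterates.

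For Part~1 you have correctly isolated the obstacle and the right construction, but you have not found the argument that forces $t=1$, and neither of your speculative routes works as stated; the class-semigroup pigeonhole in particular is off track, since the class of $T$ does not control $\mathsf L(T)$. The missing idea is a precise extremal choice. Let $\ell \in [k,\rho_k(G)]$ be minimal with $[\ell,\rho_k(G)] \subset \mathcal U_k(G)$ and suppose $\ell > k$. Among all $B \in \mathcal B(G)$ with $\{k,j\} \subset \mathsf L(B)$ for some $j \ge \ell$, choose one of \emph{minimal length} $|B|$. Your gluing via Lemma~\ref{5.1} produces $B'$ with $|B'| < |B|$, $k \in \mathsf L(B')$, and $(j-2)+t \in \mathsf L(B')$. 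Now the minimality of $|B|$ bites: since $|B'| < |B|$, the element $B'$ cannot lie in the witness set, so every length in $\mathsf L(B')$ other than those below $\ell$ is excluded, i.e.\ $(j-2)+t < \ell \le j$. This forces $t=1$ and $j=\ell$, whence $\ell-1 \in \mathsf L(B') \subset \mathcal U_k(G)$, contradicting the minimality of $\ell$. In short, the correct monovariant is simply $|B|$, but it must be built into the choice of $B$ from the start (relative to the threshold $\ell$) rather than discovered through blind iteration.
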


\begin{proof}
1. Let $k \in \N$. We set  $\lambda_{k} (G) = \min \mathcal U_{k} (G)$. Then $\mathcal U_{k}(G) \subset [\lambda_k (G), \rho_k (G)]$, and we have to show equality. Note that it is sufficient to prove that $[k,\rho_{k} (G)] \subset \mathcal U_{k} (G)$. Indeed, suppose that this is done, and let $\ell \in [\lambda_{k} (G), k]$. Then $\ell \le k \le \rho_{\ell} (G)$, hence $k \in \mathcal U_{\ell} (G)$ and consequently $\ell \in \mathcal U_{k} (G)$. It follows that $[\lambda_k (G), \rho_k (G)] = \mathcal U_k (G)$.

To prove the assertion, let $\ell \in [k,\rho_{k} (G)]$ be  minimal such that $[\ell,\rho_{k} (G)] \subset \mathcal U_{k} (G)$.
Assume to the contrary that $\ell > k$.
We set $\Omega = \{A\in \mathcal B (G) \t \{k,j\} \subset \mathsf L (A) \, \textnormal{ for some } \, j \geq \ell\}$ and choose $B \in \Omega$ such that $|B|$ is minimal.
Then
\[
  B = U_{1}\bdot \ldots \bdot U_{k} = V_{1}\bdot \ldots \bdot V_{j}, \quad \text{where $U_1, \ldots, U_k, V_1, \ldots, V_j \in \mathcal A(G)$} \,,
\]
and by Lemma \ref{5.1}, we may assume that
\[
  U_{k} = g_{1} \bdot g_{2} \bdot U \,\, \mbox{ with } \,\, g_{1} \t V_{j-1} \und g_{2} \t V_{j} \,,
\]
where $U = g_{3} \bdot \ldots \bdot g_{t}$ and $g_{1}\ldots g_{t} = 1_{G}$.
We set $g_{0} = g_{1}g_{2} \in G$ and obtain that $U'_{k} = g_{0} \bdot U \in \mathcal A (G)$.
Write
\[
  V_{j-1} = h_{1} \bdot \ldots \bdot h_{n} \bdot g_{1} \und V_{j} = g_{2} \bdot s_{1} \bdot \ldots \bdot s_{m}
\]
with $h_{1}\ldots h_{n}g_{1} = 1_{G}$ and $g_{2}s_{1}\ldots s_{m} = 1_{G}$.
Let $V'_{j-1} = g_{0}\bdot V \in \mathcal B (G)$, where $V = h_{1}\bdot \ldots \bdot h_{n}\bdot s_{1}\bdot \ldots \bdot s_{m} \in \mathcal F (G)$, and consider
\[
  B' = U_{1}\bdot \ldots \bdot U_{k-1}\bdot U'_{k} \,.
\]
Then $|B'| < |B|$, and
\[
  B' = U_{1}\bdot \ldots \bdot U_{k-1}\bdot U'_{k} = V_{1}\bdot \ldots \bdot V_{j-2}\bdot W_{1}\bdot \ldots \bdot W_{t} \,,
\]
where $V'_{j-1} = W_{1}\bdot \ldots \bdot W_{t}$ with $W_{1},\ldots , W_{t} \in \mathcal A (G)$.
By the minimality of $|B|$, we have $(j-2)+t < \ell \le j$. Hence $t=1$ and $j = \ell$.
Thus $\ell -1 \in \mathcal U_{k} (G)$, a contradiction to the minimality of $\ell$.
Therefore $\ell = k$ and hence $[k,\rho_{k}(G)] \subset \mathcal U_{k} (G)$.

\smallskip
2. Since $\Delta (G)$ is finite as mentioned before, we can take $S_{0} \in \mathcal B (G)$ with minimal length such that $\max \Delta \big(\mathsf L (S_{0})\big) = \max \Delta (G)$.
By Lemma \ref{5.4}, we can find $S_{1} \in \mathcal B (G)$ with minimal length such that $|S_{1}| < |S_{0}|$ and $\max \Delta \big(\mathsf L (S_{1})\big) \ge \max \Delta (G)-1$.
By the minimality of $S_{0}$,
\[
   |S_{1}| < |S_{0}| \quad \quad \mbox{ implies } \quad \quad \max \Delta \big(\mathsf L (S_{1})\big) < \max \Delta(G) \,.
\]
It follows that $\max \Delta (G)-1 = \max \Delta \big(\mathsf L (S_{1})\big) \in \Delta(G)$.
Again Lemma \ref{5.4} implies that we can find $S_{2} \in \mathcal B (G)$ with minimal length such that $|S_{2}| < |S_{1}|$ and $\max \Delta \big(\mathsf L (S_{2})\big) \ge \max \Delta (G) -2$. By the minimality of $S_{1}$,
\[
 |S_{2}| < |S_{1}| \quad \quad \mbox{ implies } \quad \quad \max \Delta \big(\mathsf L (S_{2})\big) < \max \Delta(G) -1 \,.
\]
It follows that $\max \Delta(G) -2 = \max \Delta \big(\mathsf L (S_{2})\big) \in \Delta(G)$. Continuing this process, we can obtain $S_{n} \in \mathcal B (G)$ such that
\[
   1 = \max \Delta (G) - n = \max \Delta \big(\mathsf L (S_{n})\big) \in \Delta(G) \,.
\]
Thus $\Delta(G) = [1,\max \Delta (G)]$ is an interval.
\end{proof}

\smallskip
Next we study the maxima of the sets $\mathcal U_k (G)$. Recall that we have defined $\rho_k (G) = \max \mathcal U_k (G)$ for all $k \in \N$.  Even in case of abelian groups, precise values of $\rho_{2k+1} (G)$ are unknown in general (\cite{Fa-Zh16a}).

\smallskip
\begin{proposition} \label{5.6}~
Let $k \in \N$.
\begin{enumerate}
\item $\rho_k (G) \le \frac {k \mathsf D (G)}{2}$ for all $k \in \N$.

\smallskip
\item $\rho_{2k} (G) = k \mathsf D (G)$ for all $k \in \N$.
\end{enumerate}
In particular, $1 + k \mathsf D (G) \le \rho_{2k+1} (G) \le k \mathsf D (G) + \frac {\mathsf D (G)}{2}$ for all $k \in \N$.
\end{proposition}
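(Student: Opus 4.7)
The plan is to prove part 1 by a uniform counting argument relating total length, atom-length bounds, and the prime element $1_G$; to derive the upper bound in part 2 from part 1 and realize the matching lower bound by an explicit construction using an atom of maximal length together with its term-wise inverse; and then to handle the ``in particular'' statement by adjoining a factor of $1_G$.

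For part 1, I would let $B \in \mathcal B (G)$ with $\{k, \ell\} \subset \mathsf L (B)$, $k \le \ell$, and write two atomic factorizations $B = U_1 \bdot \ldots \bdot U_k = V_1 \bdot \ldots \bdot V_\ell$. The key observation is that $1_G$ is the unique atom of length one in $\mathcal B (G)$ and is prime; hence there is a unique $s \ge 0$ with $B = 1_G^{[s]} \bdot B'$ and $\mathsf v_{1_G}(B') = 0$, and every atomic factorization of $B$ contains exactly $s$ copies of $1_G$. Removing these from both factorizations leaves factorizations of $B'$ of lengths $k - s$ and $\ell - s$ into atoms of length at least $2$, so
\[
2(\ell - s) \,\le\, |B'| \,\le\, (k - s) \mathsf D (G) \,.
\]
Since $|G| \ge 3$ forces $\mathsf D (G) \ge 2$, rearranging gives $\ell \le k \mathsf D (G)/2$.

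For the lower bound in part 2, I would fix an atom $U = g_1 \bdot \ldots \bdot g_D$ of maximal length $D = \mathsf D (G)$; because $U$ is an atom of length $\ge 2$, no $g_i$ equals $1_G$. Define $U^* = g_1^{-1} \bdot \ldots \bdot g_D^{-1} \in \mathcal F (G)$. Reversing a product-one ordering of $U$ and inverting term-wise yields a product-one ordering of $U^*$, so $U^* \in \mathcal B (G)$; moreover the involution $S \mapsto S^*$ is a multiplicative self-map of $\mathcal F (G)$ preserving $\mathcal B (G)$, hence sends atoms to atoms, so $U^* \in \mathcal A (G)$. Then
\[
B \,:=\, U \bdot U^* \,=\, \bulletprod{i \in [1,D]}\bigl( g_i \bdot g_i^{-1} \bigr)
\]
admits a factorization of length $2$ (as $U \bdot U^*$) and a factorization of length $D$ (into the length-two atoms $g_i \bdot g_i^{-1}$, each an atom because $g_i \ne 1_G$). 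Passing to $B^{[k]}$ gives $\{2k,\, k \mathsf D (G)\} \subset \mathsf L (B^{[k]})$, whence $\rho_{2k}(G) \ge k \mathsf D (G)$, and part 1 supplies the matching upper bound.

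For the ``in particular'' statement, the upper bound is part 1 applied at index $2k+1$. For the lower bound, I would take $A = B^{[k]} \bdot 1_G$; primality of $1_G$ gives $\mathsf L (A) = 1 + \mathsf L (B^{[k]})$, so $\{2k+1,\, 1 + k \mathsf D (G)\} \subset \mathsf L (A)$ and thus $\rho_{2k+1}(G) \ge 1 + k \mathsf D (G)$. The only genuinely delicate step is the verification that term-wise inversion preserves atoms in the non-commutative setting, which is forced by the order-reversal used to define $*$: any nontrivial factorization $U^* = W_1 \bdot W_2$ would, after applying $*$ again, yield a nontrivial factorization $U = W_1^* \bdot W_2^*$, contradicting irreducibility of $U$.
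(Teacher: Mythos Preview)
Your proof is correct and follows essentially the same approach as the paper: the same counting argument for part~1 (the paper simply assumes without loss of generality that $1_G \nmid A$ rather than tracking the $s$ copies explicitly), and the same term-wise-inverse construction $U \bdot U^*$ for part~2. For the lower bound in the ``in particular'' statement the paper uses the superadditivity $\rho_i(G) + \rho_j(G) \le \rho_{i+j}(G)$ (with $i=2k$, $j=1$) rather than your explicit adjunction of $1_G$, but the two arguments are equivalent in spirit and effort.
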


\begin{proof}
1. Let $k \in \N$.  Let $A \in \mathcal B (G)$ with $k \in \mathsf L (A)$. We have to show that $\max \mathsf L (A) \le \frac {k \mathsf D (G)}{2}$. There are $U_{1}, \ldots, U_{k} \in \mathcal A (G)$ such that
\[
  A = U_{1}\bdot \ldots \bdot U_{k} \,.
\]
Since $1_{G} \in \mathcal B (G)$ is a prime element, we may assume that $1_{G}$ does not divide $A$, and hence any atom dividing $A$ has length at least $2$.
If $A = V_{1} \bdot \ldots \bdot V_{\ell}$ for $V_{1}, \ldots, V_{\ell} \in \mathcal A (G)$, then we obtain
\[
2\ell \le \sum_{i=1}^{\ell} |V_{i}| = |A| = \sum_{j=1}^{k} |U_{j}| \le k \mathsf D(G) \,.
\]
It follows that $\ell \le \frac {k \mathsf D (G)}{2}$, and thus $\max \mathsf L (A) \le \frac {k \mathsf D (G)}{2}$.

\smallskip
2. Let $k \in \N$. Then, by 1., we have $\rho_{2k} (G) \le k \mathsf D (G)$.
Now let $U = g_{1} \bdot \ldots \bdot g_{\ell} \in \mathcal A (G)$ with $\ell = |U| = \mathsf D (G)$, where $g_{1},\ldots, g_{\ell} \in G$.
Consider the  sequence
\[
V = g^{-1}_{1} \bdot \ldots \bdot g^{-1}_{\ell} \in \mathcal A (G) \,.
\]
Then $k \mathsf D (G) \in \mathsf L \big((U\bdot V)^{[k]}\big)$. Since $2k \in \mathsf L \big((U\bdot V)^{[k]}\big)$, we obtain $\rho_{2k} (G) \ge k \mathsf D (G)$, and hence $\rho_{2k} (G) = k \mathsf D(G)$.

It remains to prove the ``In particular" statement.  For all $i, j \in \N$, we have $\mathcal U_i (G) + \mathcal U_j (G) \subset \mathcal U_{i+j}(G)$ whence $\rho_{i} (G) + \rho_{j} (G) \le \rho_{i+j} (G)$.  Thus the ``In particular" statement   follows from 1. and 2.
\end{proof}

\medskip
Our next goal is to study the maximum of the set of distances $\Delta (G)$. If $G$ is abelian, then precise values for $\max \Delta (G)$ are known only for very special classes of groups including cyclic groups (\cite{Ge-Zh15b}). For general groups $G$ we study $\max \Delta (G)$ via a further invariant (introduced in Definition \ref{5.7}). The main result is Proposition \ref{5.12}.

\smallskip
\begin{definition} \label{5.7}~
Let $H$ be an atomic monoid.
\begin{enumerate}
\item For $b \in H$, let $\omega(H,b)$ denote the smallest $N \in \N_{0} \cup \{\infty\}$ with the following property{\rm \,:}
            \begin{center}
            For all $n \in \N$ and $a_{1},\ldots, a_{n} \in H$, if $b \mid a_{1}\ldots a_{n}$, then there exists a subset $\Omega \subset [1,n]$ such that $| \Omega | \leq N$ and $b \mid \prod_{\nu \in \Omega}a_{\nu}$.
            \end{center}

\smallskip
\item We set
      \[
      \omega(H) = \sup\,\{\omega(H,u) \mid u \in \mathcal A (H)\} \in \N_{0} \cup \{\infty\}.
      \]
\end{enumerate}
\end{definition}

\smallskip
If $H$ is an atomic monoid with $\Delta (H) \ne \emptyset$, then $2+\max \Delta (H) \le \omega (H)$ (see \cite[Theorem 1.6.3]{Ge-HK06a} and  \cite[Proposition 3.6]{Ge-Ka10a}).
As usual we set $\omega (G) = \omega \big( \mathcal B (G) \big)$ and observe that $2+\max \Delta (G) \le \omega (G)$. If $G$ is a cyclic group or an elementary $2$-group, then $2+\max \Delta (G) = \omega (G) = \mathsf D (G)$.

\smallskip
\begin{lemma} \label{5.8}~
We have $\mathsf D (G) \le \omega (G)$, and equality holds if $G$ is abelian.
\end{lemma}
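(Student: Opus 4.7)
\noindent My plan is to prove the two assertions separately. For the inequality $\mathsf{D}(G) \le \omega(G)$, I fix an atom $U = g_1 \bdot \ldots \bdot g_\ell \in \mathcal{A}(G)$ with $\ell = \mathsf{D}(G)$ and $g_1 \cdots g_\ell = 1_G$, and exhibit elements $a_1, \ldots, a_\ell \in \mathcal{B}(G)$ witnessing $\omega(\mathcal{B}(G), U) \ge \ell$. The natural choice is $a_i := g_i \bdot g_i^{-1} \in \mathcal{B}(G)$. Then $U$ divides $a_1 \bdot \ldots \bdot a_\ell$ in $\mathcal{B}(G)$, because the quotient in $\mathcal{F}(G)$ is $g_1^{-1} \bdot \ldots \bdot g_\ell^{-1}$, which is product-one via the reversed ordering $g_\ell^{-1} \cdots g_1^{-1} = (g_1 \cdots g_\ell)^{-1} = 1_G$. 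For any proper subset $\Omega \subsetneq [1,\ell]$ missing some index $j$, a multiplicity count in $\mathcal{F}(G)$ yields
$\mathsf{v}_{g_j}\bigl(\prod_{\nu \in \Omega} a_\nu\bigr) \le (\mathsf{v}_{g_j}(U) - 1) + \mathsf{v}_{g_j^{-1}}(U)$,
which is strictly less than $\mathsf{v}_{g_j}(U)$ provided no non-2-torsion element of $\supp(U)$ has its inverse in $\supp(U)$ and every 2-torsion element of $\supp(U)$ has multiplicity $1$ in $U$; hence $U \nmid \prod_{\nu \in \Omega} a_\nu$ in $\mathcal{F}(G)$, a fortiori in $\mathcal{B}(G)$.

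\noindent For the reverse inequality when $G$ is abelian, Proposition \ref{3.4} makes $\mathcal{B}(G) \hookrightarrow \mathcal{F}(G)$ a divisor homomorphism, so divisibility in $\mathcal{B}(G)$ coincides with divisibility in $\mathcal{F}(G)$. Given $U \in \mathcal{A}(G)$ and $A_1, \ldots, A_n \in \mathcal{B}(G)$ with $U \mid A_1 \bdot \ldots \bdot A_n$, I greedily select, for each $g \in \supp(U)$ with $k_g := \mathsf{v}_g(U)$, at most $k_g$ indices $\nu$ so that the sum of $\mathsf{v}_g(A_\nu)$ over the selected indices is at least $k_g$. Taking $\Omega$ to be the union of all these selections gives $|\Omega| \le \sum_{g \in \supp(U)} k_g = |U| \le \mathsf{D}(G)$, and by construction $U \mid \prod_{\nu \in \Omega} A_\nu$ in $\mathcal{F}(G)$, hence in $\mathcal{B}(G)$. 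This yields $\omega(\mathcal{B}(G), U) \le |U| \le \mathsf{D}(G)$ for every atom $U$, completing the upper bound.

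\noindent The main obstacle is the non-divisibility claim in the non-abelian lower bound. If an atom $U$ of length $\mathsf{D}(G)$ has support closed under inversion (or a 2-torsion element of multiplicity at least $2$), the multiplicity inequality in $\mathcal{F}(G)$ may fail, and because the embedding $\mathcal{B}(G) \hookrightarrow \mathcal{F}(G)$ is not a divisor homomorphism (Proposition \ref{3.4}), the $\mathcal{F}(G)$-level argument cannot be salvaged directly. My plan is to augment the construction: replace $a_i = g_i \bdot g_i^{-1}$ by $a_i = g_i \bdot T_i$, where $T_i \in \mathcal{F}(G)$ is a product-one completing sequence with product $g_i^{-1}$ in $G$ and $\supp(T_i) \cap \supp(U) = \emptyset$. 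Such $T_i$ exists whenever $g_i^{-1}$ lies in the subgroup generated by $G \setminus \supp(U)$; in the remaining degenerate situations (e.g.\ $\supp(U) \cup \{1_G\} = G$), the strong structural constraints on $U$ and $G$ permit a direct case analysis that recovers the desired bound.
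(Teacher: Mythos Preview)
Your treatment of the abelian upper bound $\omega(G) \le \mathsf D(G)$ is correct and essentially matches the paper: both arguments use that $\mathcal B(G) \hookrightarrow \mathcal F(G)$ is a divisor homomorphism, so one only needs at most $|U|$ of the $A_\nu$ to cover $U$ in $\mathcal F(G)$.

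The lower bound, however, has a genuine gap. You correctly set up $a_i = g_i \bdot g_i^{-1}$ and show $U \mid a_1 \bdot \ldots \bdot a_\ell$ in $\mathcal B(G)$, but your non-divisibility argument for proper subproducts is incomplete. You try to rule out $U \mid \prod_{\nu \in \Omega} a_\nu$ already in $\mathcal F(G)$, notice that this fails whenever $\supp(U)$ meets its inverse set (or contains a repeated involution), and then propose an ad hoc repair via auxiliary sequences $T_i$ with support disjoint from $U$, followed by an unspecified ``case analysis'' when such $T_i$ do not exist. None of this is carried out, and there is no reason to expect the degenerate cases to be tractable by bare hands.

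The paper avoids this entirely by working in $\mathcal B(G)$ rather than $\mathcal F(G)$. Suppose, after renumbering, that $U \mid V_1 \bdot \ldots \bdot V_k$ in $\mathcal B(G)$ for some $k < \ell$, so there exists $W \in \mathcal B(G)$ with $U \bdot W = V_1 \bdot \ldots \bdot V_k$. Comparing lengths gives $2k \ge \ell$. The multiset equation $\{g_1,\ldots,g_\ell\} \uplus W = \{g_1,g_1^{-1},\ldots,g_k,g_k^{-1}\}$ forces, after a further renumbering inside $[1,k]$, that $g_{k+j} = g_j^{-1}$ for $j \in [1,\ell-k]$ and $W = g_{\ell-k+1}^{-1} \bdot \ldots \bdot g_k^{-1}$. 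Now the key point you are missing: since $W \in \mathcal B(G)$, reversing its product-one ordering shows $g_{\ell-k+1} \bdot \ldots \bdot g_k \in \mathcal B(G)$ as well. Hence
\[
U = (g_1 \bdot g_{k+1}) \bdot \ldots \bdot (g_{\ell-k} \bdot g_\ell) \bdot (g_{\ell-k+1} \bdot \ldots \bdot g_k)
\]
is a factorization of $U$ into at least two nontrivial elements of $\mathcal B(G)$, contradicting that $U$ is an atom. The moral is that the cofactor $W$, not a multiplicity count, carries the information you need; atomicity of $U$ does the rest.
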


\begin{proof}
Let $U = g_{1}\bdot \ldots \bdot g_{\ell} \in \mathcal A (G)$ with $\ell = |U| = \mathsf D (G)$.
Put $V_{i} = g_{i}\bdot g^{-1}_{i} \in \mathcal A (G)$ for all $i \in [1,\ell]$. Then we obtain
\[
  U \t V_{1}\bdot \ldots \bdot V_{\ell} \,\, \mbox{ in } \mathcal B (G) \,.
\]
Assume to the contrary that $U$ divides a proper subproduct in $\mathcal B (G)$, say $U \t V_{1}\bdot \ldots \bdot V_{k}$ for some $k \in [1,\ell-1]$.
Then $2k \ge \ell$. If $2k = \ell$, then $U = V_{1}\bdot \ldots \bdot V_{k}$ is not atom in $\mathcal B (G)$.
If $2k > \ell$, then $\ell-k < k$, and, for each $j = [1,\ell-k]$, we may assume that $g_{k+j} = g_{j}^{-1}$.
Hence we have
\[
  U = (g_{1}\bdot g_{k+1})\bdot \ldots \bdot (g_{\ell-k}\bdot g_{\ell})\bdot (g_{\ell-k+1}\bdot \ldots \bdot g_{k}) \,,
\]
and each terms is in $\mathcal B (G)$. It follows that $U$ is not atom in $\mathcal B (G)$, a contradiction.
Thus $U$ cannot divide a proper subproduct, and hence $\mathsf D (G) \le \omega (G)$.

Suppose that $G$ is abelian. It remains to show that $\omega (G) \le \mathsf D (G)$. Let $U \in \mathcal A (G)$.
If $U \t V_{1}\bdot \ldots \bdot V_{k}$ for $V_{1},\ldots, V_{k} \in \mathcal B (G)$, then there is a subproduct of at most $|U|$ factors such that $U$ divides this subproduct in $\mathcal F (G)$. Since $\mathcal B (G) \hookrightarrow \mathcal F (G)$ is a divisor homomorphism, this divisibility holds in $\mathcal B (G)$.
Hence $\omega (G, U) \le |U| \le \mathsf D (G)$, and it follows that $\omega (G) \leq \mathsf D (G)$.
\end{proof}

\smallskip
Next we define Davenport constants of additive commutative semigroups (as studied in \cite{Wa-Ga08a, Ad-Ga-Wa14a, Wa15a}) and will apply these concepts to the class semigroup of $\mathcal B (G)$. Recall that all our semigroups have an identity element (a zero-element in the additive case) and we use the convention that an empty sum equals the zero-element.

\medskip
\begin{definition} \label{5.9}~
Let $\mathcal C$ be an additive commutative semigroup.

\begin{enumerate}
\item We denote by $\mathsf d (\mathcal C)$ the smallest $d \in \N_{0} \cup \{\infty\}$ with the following property :
            \begin{center}
             For all $n \in \N$ and $c_{1}, \ldots, c_{n} \in \mathcal C$, there exists $\Omega \subset [1,n]$ such that $| \Omega | \leq d$ and $\sum_{\nu=1}^{n}c_{\nu} = \sum_{\nu \in \Omega}c_{\nu}$.
            \end{center}

\smallskip
  \item We denote by $\mathsf D(\mathcal C)$ the smallest $\ell \in \N \cup \{\infty\}$ with the following property :
            \begin{center}
            For all $n \in \N$ with  $n \geq \ell$ and $c_{1}, \ldots, c_{n} \in \mathcal C$, there exists $\Omega \subsetneq [1, n]$ such that $\sum_{\nu=1}^{n} c_{\nu} = \sum_{\nu \in \Omega}c_{\nu}$.
            \end{center}
  \end{enumerate}
\end{definition}

If $G$ is a finite abelian group, then of course the definitions for $\mathsf D (G)$ and $\mathsf d (G)$ given in Definition \ref{3.1} and in Definition \ref{5.9} coincide.

\smallskip
\begin{lemma} \label{5.10}~
If $\mathcal C$ is a finite commutative semigroup, then $\mathsf D (\mathcal C) = \mathsf d ( \mathcal C)+1 \le |\mathcal C|$.
\end{lemma}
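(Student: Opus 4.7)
The plan is to establish the two equalities in three pieces: $\mathsf D(\mathcal C) \le \mathsf d(\mathcal C)+1$, $\mathsf d(\mathcal C)+1 \le \mathsf D(\mathcal C)$, and $\mathsf D(\mathcal C) \le |\mathcal C|$. Each one follows essentially from unwinding the definitions, together with a single pigeonhole argument for the last.

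For the first inequality, I would just observe that if $n \ge \mathsf d(\mathcal C)+1$, the defining property of $\mathsf d(\mathcal C)$ produces a subset $\Omega \subset [1,n]$ with $\sum_{\nu \in \Omega} c_\nu = \sum_{\nu=1}^n c_\nu$ and $|\Omega| \le \mathsf d(\mathcal C) < n$; such an $\Omega$ is automatically proper, so $n$ qualifies for the definition of $\mathsf D(\mathcal C)$.

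For the second inequality, set $\ell = \mathsf D(\mathcal C)$ and consider an arbitrary sequence $c_1,\ldots,c_n \in \mathcal C$. If $n \le \ell-1$, then $\Omega = [1,n]$ already has size at most $\ell-1$ and the correct sum. If $n \ge \ell$, I would iterate the defining property of $\mathsf D$: find $\Omega_1 \subsetneq [1,n]$ with the same sum, then (reindexing) apply the property again to $(c_\nu)_{\nu \in \Omega_1}$ if $|\Omega_1| \ge \ell$, and so on. Since the sizes strictly decrease, after finitely many steps we reach a subset $\Omega$ with $|\Omega| \le \ell-1$ and unchanged sum. This shows $\mathsf d(\mathcal C) \le \ell - 1$.

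For the bound $\mathsf D(\mathcal C) \le |\mathcal C|$, suppose $n \ge |\mathcal C|$ and let $c_1,\ldots,c_n \in \mathcal C$. Using the identity $0 \in \mathcal C$, define the tail sums
\[
a_k = \sum_{\nu=k}^{n} c_\nu \quad \text{for } k \in [1,n+1], \text{ with the convention } a_{n+1}=0.
\]
These are $n+1 > |\mathcal C|$ elements of $\mathcal C$, so by pigeonhole there exist $1 \le i < j \le n+1$ with $a_i = a_j$. Setting $\Omega = [1,n] \setminus [i,j-1]$ gives a proper subset of $[1,n]$ (since $i<j$ means $[i,j-1]\ne\emptyset$), and
\[
\sum_{\nu \in \Omega} c_\nu \;=\; \sum_{\nu=1}^{i-1} c_\nu + a_j \;=\; \sum_{\nu=1}^{i-1} c_\nu + a_i \;=\; \sum_{\nu=1}^{n} c_\nu.
\]
Hence $n$ satisfies the condition defining $\mathsf D(\mathcal C)$, proving $\mathsf D(\mathcal C) \le |\mathcal C|$. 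There is no real obstacle here; the one subtlety is that commutative semigroups need not admit cancellation, so the proof cannot equate prefix sums and ``cancel'' a middle block — the argument above avoids this by keeping the inner block intact and reading off equality of the two tails directly.
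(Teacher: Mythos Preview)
Your proof is correct and essentially the same as the paper's: the paper cites the equality $\mathsf D(\mathcal C)=\mathsf d(\mathcal C)+1$ from the literature where you supply the easy direct argument, and for the bound $\mathsf D(\mathcal C)\le|\mathcal C|$ the paper runs the identical pigeonhole on partial sums, only with \emph{prefix} sums $0,\,c_1,\,c_1+c_2,\,\ldots$ instead of your tail sums. One small correction to your closing remark: the prefix-sum version does not need cancellation either --- from $c_1+\ldots+c_k=c_1+\ldots+c_\ell$ with $k<\ell$ one simply substitutes to obtain $\sum_{\nu=1}^n c_\nu=(c_1+\ldots+c_k)+(c_{\ell+1}+\ldots+c_n)$, which is exactly what the paper does.
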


\begin{proof}
The equality $\mathsf D (\mathcal C) = \mathsf d ( \mathcal C)+1$ is verified in \cite[Proposition 1.2]{Ad-Ga-Wa14a}. Since we could not find a reference that $|\mathcal C|$ is an upper bound, we provide the short argument. To show that $\mathsf D (\mathcal C) \le |\mathcal C|$, it suffices to verify that $|\mathcal C|$ has the property given in Definition \ref{5.9}. So let $n \in \N$ with  $n \geq |\mathcal C|$ and $c_{1}, \ldots, c_{n} \in \mathcal C$. If all sums
\[
0 = \sum_{\nu \in \emptyset} c_{\nu}, c_1, c_1+c_2, \ldots, c_1+ \ldots + c_{n-1}
\]
are pairwise distinct, then one of the sums coincides with $c_1+ \ldots + c_n$ as required. Suppose there are $k, \ell \in [0, n-1]$ with $k < \ell$ and $c_1+ \ldots + c_k = c_1 + \ldots + c_{\ell}$. Then
$c_1+ \ldots + c_n = c_1 + \ldots + c_k + c_{\ell + 1 } + \ldots + c_n$ as required.
\end{proof}

\smallskip
\begin{lemma} \label{5.11}~
Let $F$ be a monoid, $H \subset F$ a submonoid with $H^{\times} = H \cap F^{\times}$ and $\mathcal C = \mathcal C^* (H,F)$.

If $n \in \N \und f, a_{1}, \ldots, a_{n} \in F \mbox{ with } fa_{1}\ldots a_{n} \in H$, then there exists a subset $\Omega \subset [1,n]$ such that
\[
  |\Omega| \le \mathsf d (\mathcal C) \und f\prod_{\nu \in \Omega}a_{\nu} \in H \,.
\]
\end{lemma}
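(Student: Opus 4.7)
The plan is to pass from $F$ to the class semigroup $\mathcal C = \mathcal C^* (H,F)$, apply the defining property of $\mathsf d (\mathcal C)$ to the $\sim_H$-classes of $a_1, \ldots, a_n$, and translate the result back via the fact that membership in $H$ depends only on the $\sim_H$-equivalence class. First I would set $c_\nu = [a_\nu]_H^F$ for each $\nu \in [1,n]$ and observe that $\sim_H$ is a congruence relation on $F$, so the map $y \mapsto [y]_H^F$ is a semigroup homomorphism satisfying $c_{\nu_1} + c_{\nu_2} = [a_{\nu_1} a_{\nu_2}]_H^F$. For $a_\nu \in (F \setminus F^\times) \cup \{1\}$ the class $c_\nu$ lies in $\mathcal C$ by definition. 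For the remaining case $a_\nu \in F^\times \setminus \{1\}$ a preliminary reduction is required: using the hypothesis $H^\times = H \cap F^\times$, one absorbs $a_\nu$ either into $f$ (which is harmless when $a_\nu \in H^\times$, since then $[a_\nu]_H^F = [1]_H^F$ and the membership $f a_1 \cdots a_n \in H$ descends to $f \prod_{\mu \ne \nu} a_\mu \in H$) or into a non-unit factor $a_\mu$ (since $a_\nu a_\mu \in F \setminus F^\times$ has class in $\mathcal C$). The reduction is designed so that the eventual bound on $|\Omega|$ is preserved.

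Once all $c_\nu$ lie in $\mathcal C$, I would apply the defining property of $\mathsf d (\mathcal C)$ from Definition \ref{5.9} to the sequence $(c_1, \ldots, c_n)$, obtaining a subset $\Omega \subset [1,n]$ with $|\Omega| \le \mathsf d (\mathcal C)$ and $\sum_{\nu = 1}^{n} c_\nu = \sum_{\nu \in \Omega} c_\nu$ in $\mathcal C$. Translating this back through the homomorphism $F \to \mathcal C (H,F)$ yields $a_1 \cdots a_n \sim_H \prod_{\nu \in \Omega} a_\nu$. By the very definition of $\sim_H$, the set $y^{-1} H \cap F$ depends only on the class $[y]_H^F$, so for every $t \in F$ we have $(a_1 \cdots a_n) t \in H$ if and only if $(\prod_{\nu \in \Omega} a_\nu) t \in H$. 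Taking $t = f$ and using the hypothesis $f a_1 \cdots a_n \in H$ gives $f \prod_{\nu \in \Omega} a_\nu \in H$, which is the desired conclusion.

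The main obstacle I anticipate is the preliminary unit-absorption step. Units $a_\nu \in H^\times$ are straightforward because they have trivial class, but units $a_\nu \in F^\times \setminus H^\times$ require more care, since their $\sim_H$-classes need not automatically admit a representative in $(F \setminus F^\times) \cup \{1\}$ and hence need not lie in $\mathcal C^* (H,F)$. In the factorial setting $F = F^\times \times \mathcal F (P)$ underlying the $C$-monoid framework (as in Lemma \ref{2.2}), this is clean because the decomposition neatly separates the unit part from the generic part which already lies in $F \setminus F^\times$; in the stated generality a careful bookkeeping is needed to ensure that pairing or absorbing such units does not force $|\Omega|$ to grow beyond $\mathsf d (\mathcal C)$.
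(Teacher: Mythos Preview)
Your core argument is exactly the paper's: set $c_\nu=[a_\nu]_H^F$, apply Definition~\ref{5.9} to get $\Omega$ with $|\Omega|\le\mathsf d(\mathcal C)$ and $\sum_\nu c_\nu=\sum_{\nu\in\Omega}c_\nu$, then use that $[fa_1\cdots a_n]_H^F=[f]_H^F+\sum_\nu c_\nu=[f]_H^F+\sum_{\nu\in\Omega}c_\nu=[f\prod_{\nu\in\Omega}a_\nu]_H^F\subset H$. The paper does this in two displayed lines with no further comment.

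The difference is your unit-absorption discussion. You are right that if some $a_\nu\in F^\times\setminus\{1\}$ then $[a_\nu]_H^F$ need not lie in $\mathcal C^*(H,F)$, so strictly speaking one cannot directly invoke $\mathsf d(\mathcal C^*(H,F))$ on the tuple $(c_1,\dots,c_n)$. The paper simply ignores this point and writes the argument as if working in $\mathcal C(H,F)$. In the only application (Proposition~\ref{5.12} with $F=\mathcal F(G)$) one has $F^\times=\{1_{\mathcal F(G)}\}$, so $\mathcal C^*(H,F)=\mathcal C(H,F)$ and the issue evaporates. Your caution is therefore technically warranted in the stated generality but irrelevant where the lemma is used; rather than the somewhat delicate absorption scheme you sketch, the cleanest fix is just to note that the argument goes through verbatim with $\mathcal C(H,F)$ in place of $\mathcal C^*(H,F)$, and that the two coincide when $F$ is reduced.
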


\begin{proof}
By the definition of $\mathsf d (\mathcal C)$, there exists a subset $\Omega \subset [1,n]$ such that $|\Omega| \le \mathsf d (\mathcal C)$ and
\[
\sum_{\nu = 1}^{n}[a_{\nu}]^{F}_{H} = \sum_{\nu \in \Omega}[a_{\nu}]^{F}_{H}.
\]
Since $fa_{1}\ldots a_{n} \in H$, we have
\[
f\prod_{\nu \in \Omega}a_{\nu} \in \Big[f\prod_{\nu \in \Omega}a_{\nu}\Big]^{F}_{H} = [f]^{F}_{H} + \sum_{\nu \in \Omega}[a_{\nu}]^{F}_{H} = [f]^{F}_{H} + \sum_{\nu = 1}^{n}[a_{\nu}]^{F}_{H} = \Big[f\prod_{\nu = 1}^{n}a_{\nu}\Big]^{F}_{H} \subset H \,. \qedhere
\]
\end{proof}

\smallskip
\begin{proposition} \label{5.12}~

\begin{enumerate}
\item $\mathsf D (G) \le \omega (G) \le \mathsf D (G) + \mathsf d \Big( \mathcal C \big(\mathcal B (G), \mathcal F (G)\big) \Big)$.

\smallskip
\item $\mathsf D (G/G') \le \mathsf D \Big( \mathcal C \big(\mathcal B (G), \mathcal F (G)\big) \Big)$, and equality holds if $G$ is abelian.
\end{enumerate}
\end{proposition}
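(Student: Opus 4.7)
The plan for Part 1 is to combine Lemma \ref{5.8} with Lemma \ref{5.11} in a two-stage argument. The lower bound $\mathsf D (G) \le \omega (G)$ is exactly Lemma \ref{5.8}, so only the upper bound needs work. Given $U \in \mathcal A (G)$ and $V_1, \ldots, V_n \in \mathcal B (G)$ with $U \mid V_1 \bdot \ldots \bdot V_n$ in $\mathcal B (G)$, I will exhibit an index set of cardinality at most $\mathsf D (G) + \mathsf d \big( \mathcal C \big( \mathcal B (G), \mathcal F (G) \big) \big)$ witnessing this divisibility. Note that since $\mathcal F (G)^{\times} = \{1\}$, we have $\mathcal C^{*} \big( \mathcal B (G), \mathcal F (G) \big) = \mathcal C \big( \mathcal B (G), \mathcal F (G) \big)$, so the two Davenport constants agree and Lemma \ref{5.11} applies as written.

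The first stage takes place in $\mathcal F (G)$: since $|U| \le \mathsf D (G)$ and $U$ divides $V_1 \bdot \ldots \bdot V_n$ in $\mathcal F (G)$, each of the at most $\mathsf D (G)$ terms of $U$ is a term of some $V_i$, yielding $\Omega_1 \subset [1,n]$ with $|\Omega_1| \le \mathsf D (G)$ and $U \mid \prod_{i \in \Omega_1} V_i$ in $\mathcal F (G)$. Set $W = U^{-1} \prod_{i \in \Omega_1} V_i \in \mathcal F (G)$; then, dividing the relation $V_1 \bdot \ldots \bdot V_n \in U \cdot \mathcal B (G)$ by $U$ in $\mathsf q (\mathcal F (G))$, one obtains $W \bdot \prod_{\nu \notin \Omega_1} V_{\nu} \in \mathcal B (G)$. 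The second stage applies Lemma \ref{5.11} with $f = W$ and the remaining $V_{\nu}$ as the $a_{\nu}$, producing $\Omega_2 \subset [1,n]\setminus \Omega_1$ with $|\Omega_2| \le \mathsf d (\mathcal C)$ and $W \bdot \prod_{\nu \in \Omega_2} V_{\nu} \in \mathcal B (G)$; multiplication by $U$ gives $U \mid \prod_{i \in \Omega_1 \cup \Omega_2} V_i$ in $\mathcal B (G)$. Taking the supremum over $U$ completes Part 1. The key subtlety is that $\mathcal B (G) \hookrightarrow \mathcal F (G)$ fails to be a divisor homomorphism for non-abelian $G$, so divisibility in $\mathcal F (G)$ does not immediately yield divisibility in $\mathcal B (G)$; this is precisely what Lemma \ref{5.11} (via the class semigroup) repairs, and is the main obstacle that the correction term $\mathsf d (\mathcal C)$ overcomes.

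For Part 2, I will use Theorem \ref{3.8}.1, which embeds $G/G'$ as a subgroup $\mathcal C_0$ of $\mathcal C = \mathcal C \big( \mathcal B (G), \mathcal F (G) \big)$. Given $n \ge \mathsf D (\mathcal C)$ and $c_1, \ldots, c_n \in G/G'$, I lift the $c_{\nu}$ through the isomorphism $G/G' \to \mathcal C_0$ to elements of $\mathcal C_0 \subset \mathcal C$. The definition of $\mathsf D (\mathcal C)$ provides $\Omega \subsetneq [1,n]$ with $\sum_{\nu=1}^n c_{\nu} = \sum_{\nu \in \Omega} c_{\nu}$ inside $\mathcal C$; as both sides lie in the subgroup $\mathcal C_0$ and the isomorphism $G/G' \to \mathcal C_0$ preserves sums, the identity descends to $G/G'$, giving $\mathsf D (G/G') \le \mathsf D (\mathcal C)$. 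When $G$ is abelian, $G/G' = G$; by Proposition \ref{3.4} the monoid $\mathcal B (G)$ is Krull, and (as recorded in the introduction and Proposition \ref{3.5}.2) its class semigroup coincides with its class group, which is isomorphic to $G$. Since $\mathsf D$ of a group in the sense of Definition \ref{5.9} agrees with the classical Davenport constant (as remarked immediately after Definition \ref{5.9}), this gives $\mathsf D (\mathcal C) = \mathsf D (G) = \mathsf D (G/G')$, yielding equality.
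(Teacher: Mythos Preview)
Your proof is correct and follows essentially the same approach as the paper's. For Part~1 you argue exactly as the paper does: first find at most $|U| \le \mathsf D(G)$ factors covering $U$ in $\mathcal F(G)$, write the cofactor $W$, and then invoke Lemma~\ref{5.11} to select at most $\mathsf d(\mathcal C)$ further factors making $W$ land in $\mathcal B(G)$; your remark that $\mathcal C^* = \mathcal C$ (since $\mathcal F(G)^{\times}=\{1\}$) is a nice clarification the paper leaves implicit. For Part~2 you likewise use Theorem~\ref{3.8}.1 to embed $G/G'$ into $\mathcal C$ and, in the abelian case, identify $\mathcal C$ with the class group $\cong G$; the paper cites Lemma~\ref{2.1} for this last identification, whereas you appeal to the introduction and Proposition~\ref{3.5}.2, but the content is the same.
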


\begin{proof}
We set $\mathcal C = \mathcal C (\mathcal B (G), \mathcal F (G) \big)$.

1. The first inequality follows from Lemma \ref{5.8}. For the second inequality, let $U \in \mathcal A (G)$, and $A_{1}, \ldots, A_{n} \in \mathcal B (G)$ such that
\[
  U \t A_{1}\bdot \ldots \bdot A_{n} \,\, \mbox{ in } \mathcal B (G) \,.
\]
After renumbering if necessary, we may assume that $U \t A_{1}\bdot \ldots \bdot A_{|U|}$ in $\mathcal F (G)$, say $A_{1}\bdot \ldots \bdot A_{|U|} = U\bdot W$ with $W \in \mathcal F (G)$.
Then $W\bdot A_{|U|+1}\bdot \ldots \bdot A_{n} \in \mathcal B (G)$. By Lemma \ref{5.11}, there is a subset $\Omega \subset \big[|U|+1, n\big]$, say $\Omega = \big[|U|+1,m\big]$, such that $|\Omega| \leq \mathsf d (\mathcal C) \und W\bdot A_{|U|+1}\bdot \ldots \bdot A_{m} \in \mathcal B (G)$. Thus we obtain
\[
  A_{1}\bdot \ldots \bdot A_{m} = U\bdot (W\bdot A_{|U|+1}\bdot \ldots \bdot A_{m})
\]
and $m \le |U| + \mathsf d (\mathcal C) \le \mathsf D (G) + \mathsf d (\mathcal C)$.

\smallskip
2. By Theorem \ref{3.8}.1, $G/G'$ is an isomorphic to a subsemigroup of $\mathcal C$. Clearly, this implies that $\mathsf D (G/G') \le \mathsf D (\mathcal C)$. Suppose that $G$ is abelian. Then $|G'|=1$ and $G/G'$ is isomorphic to $G$. Furthermore, $\mathcal B (G) \hookrightarrow \mathcal F (G)$ is a divisor theory, the class semigroup $\mathcal C$ is isomorphic to the class group of $\mathcal B (G)$ by Lemma \ref{2.1}, and the class group of $\mathcal B (G)$  is isomorphic to $G$. Thus equality holds.
\end{proof}

\providecommand{\bysame}{\leavevmode\hbox to3em{\hrulefill}\thinspace}
\providecommand{\MR}{\relax\ifhmode\unskip\space\fi MR }
\providecommand{\MRhref}[2]{%
  \href{http://www.ams.org/mathscinet-getitem?mr=#1}{#2}
}
\providecommand{\href}[2]{#2}

\end{document}